    \tikzset{
        mid arrow/.style={
            postaction={decorate,decoration={
                markings,
                mark=at position .6 with {\arrow[scale=1.2, #1]{Stealth}}
            }}
        }
    }
\numberwithin{equation}{section}
\theoremstyle{theorem}
\newtheorem{theorem}{Theorem}[section]
\newtheorem{corollary}[theorem]{Corollary}
\newtheorem{lemma}[theorem]{Lemma}
\newtheorem{proposition}[theorem]{Proposition}
\theoremstyle{definition}
\newtheorem{remark}[theorem]{Remark}
\newtheorem{definition}[theorem]{Definition}
\newtheorem{example}[theorem]{Example}
\newcommand{\sub}{\subseteq}
\newcommand{\sm}{\setminus}
\newcommand{\CC}{\mathbb{C}}
\newcommand{\ZZ}{\mathbb{Z}}
\newcommand{\FF}{\mathbb{F}}
\newcommand{\ff}{\mathcal{F}}
\newcommand{\bb}{\mathcal{B}}
\newcommand{\dd}{\mathcal{D}}
\newcommand{\cc}{\mathcal{C}}
\newcommand{\mm}{\mathcal{M}}
\newcommand{\uu}{\mathcal{U}}
\newcommand{\oo}{\mathcal{O}}
\newcommand{\gl}{\mathfrak{gl}}
\newcommand{\sll}{\mathfrak{sl}}
\newcommand{\len}{\mathbf{len}}
\newcommand{\GL}{\operatorname{GL}}
\newcommand{\ol}{\overline}
\newcommand{\ra}{\rightarrow}
\newcommand{\onto}{\twoheadrightarrow}
\newcommand{\xra}{\xrightarrow[]}
\newcommand{\eps}{\epsilon}
\newcommand{\id}{\operatorname{id}}
\newcommand{\uvsln}{U_v(\sll_n)}
\newcommand{\drcomm}[1]{\textcolor{red}{#1}}
\title{Representation theory of mirabolic quantum $\mathfrak{sl}_n$}
\author{Pallav Goyal}
\address{P.~Goyal: Department of Mathematics, University of California Riverside}
\email{pallavg@ucr.edu}
\author{Daniele Rosso}
\address{D.~Rosso: Department of Mathematics and Actuarial Science, Indiana University Northwest}
\email{drosso@iu.edu}
\date{\today}
\begin{document}

\begin{abstract}
We show that the mirabolic quantum group $MU(n)$ is a comodule algebra over the quantized enveloping algebra $U_v(\mathfrak{sl}_n)$, and use this structure to give a complete classification of its finite dimensional representations. In particular, we explicitly describe the construction of all irreducible finite dimensional representations of $MU(n)$ and show that the category of finite dimensional representations is semisimple. A crucial step involves constructing and analyzing Verma-type universal representations of $MU(n)$.
\end{abstract}

\maketitle

\section{Introduction}

In a classic paper from 1990 (\cite{BLM}), Beilison, Lusztig and MacPherson defined a convolution product on the space of $\GL_d$-invariant functions over the variety of pairs of \emph{$n$-step partial flags} in a $d$-dimensional vector space over the finite field $\FF_q$. This gave a geometric construction of the quantum Schur algebras, which are finite dimensional quotients of the quantized enveloping algebra of the Lie algebra $\gl_n$ (or $\sll_n$). They also obtained the idempotented version of the quantized enveloping algebra via a stabilization procedure for $d\to\infty$.

The construction from \cite{BLM} can be generalized by considering spaces of partial flags for Lie or algebraic groups other than the general linear group $\GL_d$. By considering pairs of isotropic partial flags, with the obvious action of an orthogonal or symplectic group, analogues in other types can be obtained. In \cite{BKLW} the authors considered groups of type B and C, and they constructed certain coideal subalgebras of the quantized enveloping algebra of $\gl_n$, which are sometimes called $\iota$quantum groups. Similarly, \cite{FL15} studied the case of groups of type $D$.

In another direction, the construction of \cite{BLM} can be extended to the `mirabolic' setting, as follows. Instead of considering pairs of flags, 
we can consider triples of two flags and a vector in $\FF_q^d$, where there are still finitely many orbits for the $\GL_d$ action (independently of $\FF_q$). A convolution product was defined by the second author in \cite{R15}, obtaining a mirabolic version of the quantized enveloping algebra. In particular, a complete classification of the finite dimensional representations for $MU(2)$ (the mirabolic quantum group version of $\sll_2$) was obtained. 

Recently, \cite{FZM} expanded on the work of \cite{R15}: they obtained explicit multiplication formulae and a description in terms of generators and relations for the mirabolic quantum group corresponding to $\gl_n$, 
also describing a stabilization procedure for $d\to\infty$, analogous to \cite{BLM}.

In this paper, we take advantage of the presentation given in \cite{FZM} to give a complete classification of the irreducible finite dimensional representations of mirabolic quantum $\sll_n$ and show that the category of finite dimensional representations is semisimple.

A summary of our main results and the organization of the paper is as follows:
\begin{itemize}
    \item In Section \ref{sec:mir-group-rep}, we give the necessary definitions and background on the mirabolic quantum group $MU(n)$ and show that it has a comodule algebra structure over $\uvsln$. We then use this structure in Section~\ref{sec:mirareps} to provide constructions of irreducible $MU(n)$-representations $L_{\lambda,r}$ that are parametrized by pairs $(\lambda,r)$ of a dominant integral weight $\lambda$ for $\sll_n$ and an integer $0\leq r\leq n$. 
    \item In Section \ref{sec:proof-prop}, we introduce the notion of the depth of an $MU(n)$-representation (which precisely corresponds to $r$ for the above representations). This is the main new algebraic property that distinguishes the categories of finite dimensional $MU(n)$-representations and $\uvsln$-representations.
    \item In Section~\ref{sec:Verma}, we give two constructions of (infinite dimensional) representations of $MU(n)$ that are the mirabolic analogues of the Verma modules for $\uvsln$. One of the main results in this section is Theorem~\ref{prop:Verma}, which says that both of our constructions give the same representation up to isomorphism. We use this theorem to provide a presentation for the irreducible representations $L_{\lambda, r}$ in terms of generators and relations in Corollary~\ref{cor:exppresent}.
    \item In Section \ref{sec:ss}, we show that every irreducible finite dimensional representation of $MU(n)$ is isomorphic to $L_{\lambda, r}$ for some $(\lambda, r)$ (Theorem~\ref{thm:class}) and that the category of finite dimensional representations for $MU(n)$ is semisimple (Theorem~\ref{thm:ss}). As a consequence of the classification, we show in Proposition~\ref{prop:bmc} that the category of these representations is a braided module category over the category of finite dimensional $\uvsln$-representations.
\end{itemize}

In a follow-up paper, we will use this classification to give an explicit combinatorial description of the mirabolic quantum Schur-Weyl duality discussed in \cite{R15} and \cite{FZM}.

\subsection{Future Directions}
Our work raises several interesting questions, which can be the basis for future research.
\begin{itemize}
\item The classification of representations and semisemplicity here are proven in the generic case, but these results will not hold if we allow $v$ to be a root of unity. It should be possible to use what is known about $\uvsln$ at roots of unity to investigate the representation theory of the mirabolic quantum group in that setting.
\item In this paper, we have worked in the category of finite dimensional representations, but it should also be interesting to extend the study of representations to some analogue of category $\oo$. The results in Section~\ref{sec:Verma} would be a starting point for such investigations.
\item There is a potentially interesting structure coming from induction and restriction functors between representations over $MU(n)$ and $MU(n+1)$ that can be studied, in analogy to the Gelfand-Zeitlin results for Lie algebras/quantum groups of type A.
\item The construction of \cite{BLM} was generalized to affine Type A in \cite{GV93}, and to other affine classical types in \cite{FLLLW20}. A construction and some results for a mirabolic affine setting were discussed in \cite{RPhD}, and one would expect to find extensions of our results in that case too.
\end{itemize}

\subsection*{Acknowledgments}
D.R. was supported in this research by an AMS-Simons Research Enhancement Grant for PUI faculty.
\section{The mirabolic quantum group and its representations}\label{sec:mir-group-rep}
\subsection{Background and Notation}
We work over the field $K=\CC(v)$, where $v$ is an indeterminate. Let $A=(a_{ij})$ be the Cartan matrix of type $A_{n-1}$, that is, for all $1\leq i,j,\leq n-1$ we have
$$a_{ij}=\begin{cases} 2 & \text{ if }i=j, \\ -1 & \text{ if }|i-j|=1, \\ 0 & \text{ if }|i-j|>1.\end{cases}$$

\begin{definition}\label{def:uvsln}
The quantum enveloping algebra of the simple Lie algebra $\sll_n$, denoted by $\uvsln$, is the unital associative algebra over $K$ with generators $e_i, f_i, k_i^{\pm1}$ for $1\leq i \leq n-1$, subject to the relations:
\begin{align}\label{eq:usln.1} k_i^{\pm1}k_i^{\mp1}=1, & \quad k_ik_j=k_jk_i \\
\label{eq:usln.2} k_ie_jk_i^{-1} = v^{a_{ij}}e_j, & \quad k_if_jk_i^{-1} = v^{-a_{ij}}f_j\\
\label{eq:usln.3} e_if_j-f_je_i & =  \delta_{ij}\frac{k_i-k_i^{-1}}{v-v^{-1}}\\
\label{eq:usln.4} e_ie_j=e_je_i, & \quad f_if_j=f_jf_i \quad\text{ if }|i-j| \neq 1 \\
\label{eq:usln.5} (v+v^{-1})e_ie_je_i & =  e_i^2e_j + e_je_i^2 \quad\text{ if }|i-j| = 1\\
\label{eq:usln.6} (v+v^{-1})f_if_jf_i & =  f_i^2f_j + f_jf_i^2 \quad\text{ if }|i-j| = 1.\end{align}
\end{definition}

\begin{remark}
It is well known that $\uvsln$ is a Hopf algebra, with comultiplication $\Delta:\uvsln\to \uvsln\otimes \uvsln$ given on the generators by 
\[\Delta(e_i) = 1\otimes e_i + e_i\otimes k_i,\quad\Delta(f_i) = k_i^{-1}\otimes f_i + f_i\otimes 1,\quad\Delta(k_i^{\pm 1}) = k_i^{\pm 1}\otimes k_i^{\pm 1},\]
counit $\varepsilon:\uvsln\to K$ given by
\[\varepsilon(k_i^{\pm 1})=1,\qquad \varepsilon(e_i)=\varepsilon(f_i)=0,\]
and antipode $S:\uvsln\to \uvsln$ given by:
\[S(k_i^{\pm 1})=k_i^{\mp1},\qquad S(e_i)=-k_i^{-1}e_i, \qquad S(f_i)=-f_ik_i,\]
for all $1\leq i\leq n-1$
\end{remark}

The main object of study of this paper is the following algebra.

\begin{definition}\label{def:mir-sln}
The \emph{mirabolic quantum group}, denoted by $MU(n)$, is the unital associative algebra over $K$ with generators $\ell$, and $e_i, f_i, k_i^{\pm 1}$ for $1\leq i\leq n-1$,  subject to the relations of Definition \ref{def:uvsln}, plus the additional ones involving $\ell$:
\begin{align}\label{eq:mun.1} \ell^2=\ell,&\quad k_i\ell=\ell k_i\\
\label{eq:mun.2} \ell e_i=e_i\ell,&\quad \ell f_i=f_i\ell \quad\text{ if }i\geq 2\\
\label{eq:mun.3} \ell e_1\ell=\ell e_1,&\quad \ell f_1\ell=f_1\ell\\
\label{eq:mun.4} (v+v^{-1})e_1\ell e_1 &= v^{-1}e_1^2\ell + v\ell e_1^2\\
\label{eq:mun.5} (v+v^{-1})f_1\ell f_1 &= v^{-1}\ell f_1^2 + vf_1^2\ell.\end{align}
\end{definition}
\begin{remark}\label{rem:grading}
The algebra $MU(n)$ is $\ZZ^{n-1}$-graded by setting, for all $1\leq i\leq n-1$,
$$\deg e_i=(0,\ldots,0,\stackrel{i}{1},0,\ldots,0),\quad \deg f_i=(0,\ldots,0,\stackrel{i}{-1},0,\ldots,0),$$
$$\deg k_i=\deg k_i^{-1}=\deg \ell=(0,\ldots,0).$$
\end{remark}
\begin{remark}The definition of this algebra comes from a convolution construction. We recall some of the basic notions here and we refer the reader to \cite[\S 2]{R15} for more details. We let $\FF_q$ be the finite field with $q$ elements, and $\ff(n,d)$ be the variety of $n$-step partial flags in $\FF_q^d$. Consider the diagonal action of $\GL(\FF_q^d)$ on $\ff(n,d)\times\ff(n,d)\times \FF_q^d$ which has finitely many orbits parametrized by decorated matrices. The space of invariant functions $\mm\uu_q(n,d):=\CC\left(\ff(n,d)\times\ff(n,d)\times \FF_q^d\right)^{\GL(\FF_q^d)}$ is a unital associative algebra with the convolution product defined by
\begin{equation} \label{eq:convo}
(\alpha*\beta)(F,F',u):=\sum_{F''\in\ff(n,d),~u'\in\FF_q^d}\alpha(F,F'',u')\beta(F'',F',u-u'),
\end{equation}
which we call the \emph{mirabolic quantum Schur algebra}. Since the structure constants for $\mm\uu_q(n,d)$ are polynomials in $q$, we can consider it to be the specialization of the \emph{generic mirabolic quantum Schur algebra} $MU(n,d)$, which is defined over $\CC(v)$, where $q=v^2$. By \cite{FZM}, the generators of $MU(n,d)$ satisfy the relations in Definitions \ref{def:uvsln} and \ref{def:mir-sln} (in addition to other relations that depend on $d$) hence we have a surjective map $MU(n)\onto MU(n,d)$ for all $d$. We notice here that in \cite{FZM} the authors prefer to work with the `general linear' analogue of the mirabolic quantum group, while we are using the `special linear' version, which is why the `Cartan part' has $n$ generators in their case ($H_a$, $1\leq a\leq n$, see \cite[Def. 2.1.2]{FZM}), and $n-1$ in our case ($k_i$, $1\leq i\leq n-1$). This difference is not significant for our purposes.
\end{remark}
\begin{remark}\label{rem:anti-inv}
There are two anti-involutions $\star$ and $\dagger$ on $MU(n)$ defined on the generators by 
\[\ell^\star=\ell,\quad (k_i^{\pm 1})^\star=k_i^{\pm 1},\quad e_i^\star=f_i,\quad f_i^\star=e_i.\]
\[\ell^\dagger=\ell,\quad (k_i^{\pm 1})^\dagger=k_i^{\pm 1},\quad e_i^\dagger=v^{-1}k_i^{-1}f_i=vf_ik_i^{-1},\quad f_i^\dagger=ve_ik_i=v^{-1}k_ie_i.\]
\end{remark}
\begin{proposition}\label{prop:comodule-alg}
\begin{enumerate}
\item There are two surjective algebra maps $\pi_0,\pi_1:MU(n)\to \uvsln$ that map the generators $e_i, f_i, k_i, k_i^{-1}$ to the ones of the same name, and $\pi_0(\ell)=0$, $\pi_1(\ell)=1$ respectively.
\item There is an injective algebra map $\iota:\uvsln\to MU(n)$ that maps the generators $e_i, f_i, k_i, k_i^{-1}$ to the same ones for $MU(n)$. 
\item The algebra $MU(n)$ is a co-module algebra for $\uvsln$ via the map:
\[\rho: MU(n) \ra \uvsln\otimes MU(n) ,\]
defined on generators via:
\[\rho(e_i)=\Delta(e_i) = 1\otimes e_i + e_i\otimes k_i, \qquad \rho(f_i)=\Delta(f_i) = k_i^{-1}\otimes f_i + f_i\otimes 1,\]
\[\rho(k_i^{\pm1})=\Delta(k_i^{\pm 1}) = k_i^{\pm 1}\otimes k_i^{\pm 1}, \qquad \rho(\ell)=1\otimes \ell.\]
\end{enumerate}
\end{proposition}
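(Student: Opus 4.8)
For parts (1) and (2) the plan is to use the presentations by generators and relations: one assigns the images on generators as prescribed and checks that the defining relations are respected. For $\pi_0$ and $\pi_1$ the relations of Definition \ref{def:uvsln} are sent to themselves verbatim, so only \eqref{eq:mun.1}--\eqref{eq:mun.5} need checking; under $\ell\mapsto 0$ each collapses to the trivial identity, and under $\ell\mapsto 1$ each becomes a valid identity in $\uvsln$ (for instance \eqref{eq:mun.4} becomes $(v+v^{-1})e_1^2=(v^{-1}+v)e_1^2$). Surjectivity is immediate since generators go to generators. For $\iota$, by construction $MU(n)$ satisfies every relation of $\uvsln$, so $\iota$ is a well-defined homomorphism, and it is injective because $\pi_0\circ\iota=\id_{\uvsln}$.

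For part (3) I would first check that $\rho$ is a well-defined algebra homomorphism, and then verify the two comodule-algebra axioms. For the first, the key observation is that on the subalgebra generated by the $e_i,f_i,k_i^{\pm1}$ the prescribed formulas for $\rho$ coincide with $(\id\otimes\iota)\circ\Delta$, which is a composite of algebra maps; hence all the relations of Definition \ref{def:uvsln} are automatically preserved, and it remains only to verify the five relations of Definition \ref{def:mir-sln}, now with $\rho(\ell)=1\otimes\ell$. After expanding the relevant products in $\uvsln\otimes MU(n)$, relations \eqref{eq:mun.1}--\eqref{eq:mun.3} reduce to the corresponding relations of $MU(n)$ together with $\ell^2=\ell$ and $k_i\ell=\ell k_i$.

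The substantive check is \eqref{eq:mun.4}, with \eqref{eq:mun.5} being entirely analogous. Writing $X=\rho(e_1)=1\otimes e_1+e_1\otimes k_1$ and $L=\rho(\ell)=1\otimes\ell$, I would expand $(v+v^{-1})\,XLX$ and $v^{-1}X^2L+vLX^2$ in $\uvsln\otimes MU(n)$ and compare the coefficients of $1\otimes(-)$, of $e_1\otimes(-)$, and of $e_1^2\otimes(-)$. The $e_1^2\otimes(-)$ coefficients agree on sight (both equal $(v+v^{-1})k_1^2\ell$, using $k_1\ell=\ell k_1$); the $1\otimes(-)$ coefficients agree by precisely relation \eqref{eq:mun.4} of $MU(n)$; and the $e_1\otimes(-)$ coefficients agree after commuting $k_1$ past $e_1$ and $\ell$ via $k_1e_1=v^2e_1k_1$ and $k_1\ell=\ell k_1$. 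This last step is exactly what forces the powers $v^{-1}$ and $v$ occurring in \eqref{eq:mun.4}--\eqref{eq:mun.5}, and it is the only place in the whole argument that is not purely formal; I expect it to be the main, if still routine, obstacle.

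Finally, for the comodule-algebra axioms, both sides of $(\Delta\otimes\id)\circ\rho=(\id\otimes\rho)\circ\rho$ are algebra maps $MU(n)\to\uvsln\otimes\uvsln\otimes MU(n)$, so it suffices to compare them on generators: on $e_i,f_i,k_i^{\pm1}$ this is coassociativity of $\Delta$ transported along $\iota$, and on $\ell$ both sides equal $1\otimes 1\otimes\ell$. Similarly $(\varepsilon\otimes\id)\circ\rho$ is an algebra endomorphism of $MU(n)$ fixing each generator, and therefore equals $\id_{MU(n)}$.
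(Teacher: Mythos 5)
Your proposal is correct and follows essentially the same route as the paper: check the defining relations for $\pi_0,\pi_1,\iota$ (with injectivity from $\pi_0\circ\iota=\id$), then verify that $\rho$ preserves the relations of Definition \ref{def:mir-sln} — with \eqref{eq:mun.4}–\eqref{eq:mun.5} as the only substantive computation, using $k_1e_1=v^2e_1k_1$ and $k_1\ell=\ell k_1$ — and check the comodule axioms on generators. Your coefficient-by-coefficient comparison of $1\otimes(-)$, $e_1\otimes(-)$, $e_1^2\otimes(-)$ is just a tidier organization of the paper's explicit expansion.
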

\begin{proof}
\begin{enumerate}
\item Since the generators $e_i, f_i, k_i, k_i^{-1}$ satisfy the same relations in $MU(n)$ as they do in $\uvsln$, and since replacing $\ell$ with either $1$ or $0$ satisfies the relations of Definition \ref{def:mir-sln}, it is clear that both maps are well defined. They are clearly surjective since the generators of $\uvsln$ are in the image of both maps.
\item It is clear that the map is well defined, since the generators of $\uvsln$ satisfy the same relations in $MU(n)$. Notice that the composition $\pi_0\circ\iota=\id_{\uvsln}$, hence $\iota$ is injective. 
\item It is clear that all the axioms of the coaction are satisfied on the generators $e_i$, $f_i$, $k_i^{\pm 1}$, since on those the coaction is equal to the comultiplication in $\uvsln$. Then, we have 
$(\Delta\otimes \id)\rho(\ell)=1\otimes 1\otimes \ell=(\id\otimes \rho)\rho(\ell)$, and $(\varepsilon\otimes \id)\rho(\ell)=\ell$.
To conclude, we check that $\rho$ satisfies the relations in Definition \ref{def:mir-sln}.
\[\rho(\ell^2)=\rho(\ell)\rho(\ell)=(1\otimes \ell)(1\otimes \ell)=1\otimes \ell^2=1\otimes \ell=\rho(\ell).\]
For all $i=1,\ldots,n-1$, we have:
\[\rho(k_i\ell)=(k_i\otimes k_i)(1\otimes \ell)=(k_i\otimes k_i\ell)=(k_i\otimes \ell k_i)=(1\otimes \ell)(k_i\otimes k_i)=\rho(\ell k_i).\]
For all $i=2,\ldots,n-1$, we have:
\[\rho(\ell e_i)=(1\otimes \ell)(1\otimes e_i+e_i\otimes k_i)=1\otimes \ell e_i+e_i\otimes \ell k_i=1\otimes e_i\ell+e_i\otimes k_i\ell=(1\otimes e_i+e_i\otimes k_i)(1\otimes \ell)=\rho(e_i\ell); \]
\[\rho(\ell f_i)=(1\otimes \ell)(k_i^{-1}\otimes f_i+f_i\otimes 1)=k_i^{-1}\otimes \ell f_i+f_i\otimes \ell=k_i^{-1}\otimes f_i\ell+f_i\otimes \ell=(k_i^{-1}\otimes f_i+f_i\otimes 1)(1\otimes \ell)=\rho(f_i\ell).\]
Finally,
\begin{multline*}\rho(\ell e_1\ell)=(1\otimes \ell)(1\otimes e_1+e_1\otimes k_1)(1\otimes \ell)=1\otimes \ell e_1\ell+e_1\otimes \ell k_1\ell\\=1\otimes \ell e_1+e_1\otimes \ell^2k_1=1\otimes \ell e_1+e_1\otimes \ell k_1=(1\otimes \ell)(1\otimes e_1+e_1\otimes k_1)=\rho(\ell e_1) \end{multline*}
\begin{multline*}\rho(\ell f_1\ell)=(1\otimes \ell)(k_1^{-1}\otimes f_1+f_1\otimes 1)(1\otimes \ell)=k_1^{-1}\otimes \ell f_1\ell+f_1\otimes \ell^2\\=k_1^{-1}\otimes f_1\ell+f_1\otimes \ell=(k_1^{-1}\otimes f_1+f_1\otimes 1)(1\otimes \ell)=\rho(f_1\ell) \end{multline*}
\begin{align*}
\rho((v+v^{-1})e_1\ell e_1)&= (v+v^{-1})(1\otimes e_1+e_1\otimes k_1)(1\otimes \ell)(1\otimes e_1+e_1\otimes k_1) \\
&= (v+v^{-1})(1\otimes e_1\ell e_1+e_1\otimes k_1\ell e_1+e_1\otimes e_1\ell k_1+e_1^2\otimes k_1\ell k_1)\\
&= 1\otimes (v+v^{-1}) e_1\ell e_1+v\otimes e_1\otimes k_1\ell e_1+v^{-1}e_1\otimes k_1\ell e_1+ \\
&~+ ve_1\otimes e_1\ell k_1+v^{-1}e_1\otimes e_1\ell k_1+ve_1^2\otimes k_1\ell k_1+v^{-1}e_1^2\otimes k_1\ell k_1\\
&= 1\otimes (v^{-1}e_1^2\ell+v\ell e_1^2)+v\otimes e_1\otimes \ell k_1e_1+v^{-1}e_1\otimes v^2 \ell e_1k_1+ \\
&~+ ve_1\otimes v^{-2}k_1e_1\ell+v^{-1}e_1\otimes e_1k_1\ell+ve_1^2\otimes \ell k_1^2+v^{-1}e_1^2\otimes k_1^2\ell \\
&= v^{-1}(1\otimes e_1^2\ell)+v^{-1}e_1\otimes k_1e_1\ell+v^{-1}e_1\otimes e_1k_1\ell+v^{-1}e_1^2\otimes k_1^2\ell\\
&~+ v(1\otimes \ell e_1^2)+v e_1\otimes \ell k_1e_1+v e_1\otimes \ell e_1k_1+ve_1^2\otimes \ell k_1^2 \\
&= v^{-1}(1\otimes e_1^2+e_1\otimes k_1e_1+e_1\otimes e_1k_1+e_1^2\otimes k_1^2)(1\otimes \ell) \\
&~+v(1\otimes \ell)(1\otimes e_1^2+ e_1\otimes k_1e_1+ e_1\otimes e_1k_1+e_1^2\otimes k_1^2) \\
&= v^{-1}(1\otimes e_1+e_1\otimes k_1)^2(1\otimes \ell)+v(1\otimes \ell)(1\otimes e_1+e_1\otimes k_1)^2\\
&= \rho(v^{-1}e_1^2\ell+v\ell e_1^2).
\end{align*}
The computation for $f_1\ell f_1$ is similar to the one for $e_1\ell e_1$.
\end{enumerate}    
\end{proof}
We end this section with a couple of lemmas that will be useful for computations later.

\begin{lemma} \label{lem:reduce}
Let $P(e)$ be any non-commutative polynomial in the $e_i$'s and $Q(f)$ be any non-commutative polynomial in the $f_j$'s. Then,
\[\ell P(e) \ell = \ell P(e),\]
\[\ell Q(f) \ell = Q(f)\ell.\]
\end{lemma}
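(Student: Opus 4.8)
The plan is to reduce both identities to the case of a single monomial by linearity, and then to argue by induction on the degree (the number of generators occurring), using Lemma~\ref{lem:std} together with its stated analogues — the version for expressions $\ell P(e)$, and the versions with $e$ replaced by $f$ — as the tool that lowers the degree. The two halves of the statement will use \emph{different} versions of the reduction lemma, which is the one mildly clever point.

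For the identity $\ell P(e)\ell=\ell P(e)$ I would take $P(e)$ to be a monomial in the $e_i$'s and induct on $\deg P(e)$. The base cases $\deg P(e)\le 1$ are immediate: $\ell^2=\ell$ by \eqref{eq:mun.1} handles the constant; $\ell e_i\ell=e_i\ell^2=e_i\ell=\ell e_i$ for $i\ge 2$ by \eqref{eq:mun.2}; and $\ell e_1\ell=\ell e_1$ is exactly \eqref{eq:mun.3}. For the inductive step with $\deg P(e)\ge 2$ I split into two cases. If $P(e)=e_1e_2\cdots e_s$ for some $s$ (necessarily $s=\deg P(e)\ge 2$, and $s\le n-1$ automatically), then $e_2,\dots,e_s$ each commute with $\ell$ by \eqref{eq:mun.2}, so I slide the right-hand $\ell$ leftward past $e_s,\dots,e_2$ and collapse it with \eqref{eq:mun.3}:
\[\ell P(e)\ell=\ell e_1(e_2\cdots e_s)\ell=\ell e_1\ell(e_2\cdots e_s)=\ell e_1(e_2\cdots e_s)=\ell P(e).\]
If $P(e)$ is not of that form, the analogue of Lemma~\ref{lem:std} for expressions $\ell P(e)$ lets me write $\ell P(e)=\sum_j c_j\,Q_j'(e)\,\ell\,P_j'(e)$ with each $Q_j'$ a \emph{non-constant} monomial, hence $\deg P_j'(e)<\deg P(e)$; multiplying this identity on the right by $\ell$ and applying the inductive hypothesis $\ell P_j'(e)\ell=\ell P_j'(e)$ to every summand gives $\ell P(e)\ell=\sum_j c_j\,Q_j'(e)\,\ell\,P_j'(e)=\ell P(e)$.

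The identity $\ell Q(f)\ell=Q(f)\ell$ is the mirror image, proved by induction on $\deg Q(f)$ in the same way. The base cases again come from \eqref{eq:mun.1}, \eqref{eq:mun.2}, \eqref{eq:mun.3} (here \eqref{eq:mun.3} supplies $\ell f_1\ell=f_1\ell$ directly). In the inductive step, if $Q(f)=f_sf_{s-1}\cdots f_1$ for some $s$ (the monomial that resists pulling generators through $\ell$ from the left, with $s=\deg Q(f)\ge 2$), I slide the left-hand $\ell$ rightward past $f_s,\dots,f_2$, which commute with it by \eqref{eq:mun.2}, and collapse with $\ell f_1\ell=f_1\ell$ to land on $Q(f)\ell$; otherwise I invoke Lemma~\ref{lem:std} in its $f$-version (for $Q(f)\ell$) to write $Q(f)\ell=\sum_j c_j\,P_j'(f)\,\ell\,Q_j'(f)$ with $Q_j'$ non-constant and $\deg P_j'(f)<\deg Q(f)$, multiply on the left by $\ell$, and finish with the inductive hypothesis $\ell P_j'(f)\ell=P_j'(f)\ell$.

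The only delicate point — and the place where carelessness would break the argument — is the degree bookkeeping that legitimizes the induction: one must be sure that in the non-exceptional case the reduction lemmas genuinely produce pieces of \emph{strictly smaller} degree on which the inductive hypothesis is available. This is precisely where the hypothesis ``$Q'$ non-constant'' in Lemma~\ref{lem:std} is used, since the output monomials have the same total degree as the input while the complementary factor $Q'$ has degree $\ge 1$, forcing the relevant factor $P'$ to have degree $<\deg P$. I would also stress why the two halves require different versions of the reduction lemma: proving the $e$-identity needs $e$'s peeled off the \emph{left} of $\ell$ (leaving a shorter $\ell P'(e)$ still flanked by $\ell$'s), whereas the $f$-identity needs $f$'s peeled off the \emph{right} of $\ell$; the two exceptional monomials are exactly the ones immune to the respective peeling and are dispatched by hand via \eqref{eq:mun.2}--\eqref{eq:mun.3}. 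I do not anticipate any genuine obstacle beyond this.
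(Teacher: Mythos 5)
Your proof is correct, and it is not circular: Lemma~\ref{lem:std} is proved from the defining relations alone, before the present statement, so using it (and its stated analogues) here is legitimate, and your degree bookkeeping, base cases, and the two hand-checked staircase monomials $e_1e_2\cdots e_s$ and $f_sf_{s-1}\cdots f_1$ are exactly right. That said, the paper treats this lemma as an immediate consequence of the defining relations, and there is indeed a much lighter argument that needs neither Lemma~\ref{lem:std} (whose $\ell P(e)$- and $f$-analogues the paper only asserts, not proves), nor the relations \eqref{eq:mun.4}--\eqref{eq:mun.5}, nor any induction or case analysis: the set $A=\{x\in MU(n):\ \ell x\ell=\ell x\}=\{x:\ \ell x(1-\ell)=0\}$ is a unital subalgebra, because for $x,y\in A$ inserting $1=\ell+(1-\ell)$ gives $\ell xy(1-\ell)=(\ell x)\bigl(\ell y(1-\ell)\bigr)+\bigl(\ell x(1-\ell)\bigr)\bigl(y(1-\ell)\bigr)=0$; each $e_i$ lies in $A$ by \eqref{eq:mun.1}--\eqref{eq:mun.3}, so every polynomial in the $e_i$'s does, which is the first identity. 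The mirror set $B=\{x:\ (1-\ell)x\ell=0\}=\{x:\ \ell x\ell=x\ell\}$ is likewise a subalgebra containing all the $f_i$, giving the second identity. So your route works but re-exercises the heavier commutation machinery and inherits a dependence on the unproved analogues of Lemma~\ref{lem:std}, whereas the subalgebra trick is what makes the statement genuinely ``immediate'' and keeps it independent of the Serre-type relations involving $\ell$.
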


\begin{proof}
This is an immediate consequences of the defining relations \eqref{eq:mun.2} and \eqref{eq:mun.3}.
\end{proof}
\begin{lemma} \label{lem:std}
Consider an element of the form $m=\ell Q(f)$ in $MU(n)$, where $Q(f)$ is some non-commutative monomial in the $f_i$'s. Suppose $Q(f)$ is not of the form $f_1f_{2}\cdots f_{s-1} f_s$ for some $s$. Then $m$ must be equal to a linear combination of monomials of the form $P'(f) \ell Q'(f)$ (having the same length as $m$ in terms of the generators) such that the monomials $Q'(f)$ are smaller in length than $Q(f)$.
\end{lemma}

What the above lemma essentially says is that if the monomial $Q$ is not of the specified form, then it is possible to move one or more of the $f_j$'s to the left of the $\ell$.

\begin{proof}
It suffices to prove the claim when $Q(f)$ is of the form $f_1  f_2 \cdots f_{k-1} f_k f_s$, where $s\neq k+1$.

We first treat the cases $k=0$ and $k=1$. If $k=0$, we just get that $m= \ell f_s= f_s \ell$ as $s\neq 1$, and we're done. If $k=1$, we get $m=\ell f_1f_s$. If $s>2$, we can commute the $f_s$ past both $f_1$ and $\ell$. If $s=1$, we get $m=\ell f_1^2 = (v^2+1)f_1 \ell f_1 - v^2 f_1^2\ell$, which is of the required form.

Now, suppose $k>1$. We use induction on $k$. If $s>k+1$, we can commute $f_s$ to the left of all the other elements in $m$ to get the required claim. If $s<k-1$, we can commute $f_s$ past $f_k$ and be done by induction. The cases that remain are $s=k$ and $s=k-1$.

If $s=k$, we write:
\[m=\ell f_1 f_{2}\cdots f_{k-1} f_k^2.\]
By using $f_{k-1} f_{k}^2 = (v+v^{-1})f_kf_{k-1} f_k - f_k^2f_{k-1}$, we note that we can commute at least one copy of $f_k$ to the left of $f_{k-1}$ in both the above terms, following which we can commute the $f_k$ past all other terms, and eventually $\ell$, to get the claim. 

If $s=k-1$, we write:
\[m=\ell f_{1} f_2\cdots f_{k-1}f_k f_{k-1}.\]
First, we use $(v+v^{-1})f_{k-1}f_k f_{k-1} = f_{k-1}^2 f_k + f_{k}f_{k-1}^2$. For the second term involving $f_k f_{k-1}^2,$ we can commute $f_k$ past all other generators to its left and get the required expression. For the first, we use induction as we are reduced to a case where $k$ is replaced by $k-1$ and $s=k-1$. This completes the proof.
\end{proof}
\begin{remark}
    An analogue Lemma \ref{lem:std} is also true for expressions of the form $Q(f) \ell$, where we can commute an $f_i$ to the right of $\ell$ as long as $Q(f)$ is not of the form $Q(f)=f_s f_{s-1}\cdots f_2 f_1$ for some $s$. 
    Lemma \ref{lem:std} is also true if we replace the $f_i$'s by $e_i$'s and so is its analogue about expressions of the form $P(e)\ell$.
\end{remark}

\begin{corollary} \label{cor:lastcor}
Consider any non-commutative monomial $Q(f)\in \uvsln$ in the $f_i$'s.
\begin{enumerate}
\item The element $Q(f)$ can be expressed a linear combination of monomials (having the same length as $Q(f)$) of the form:
\[f_1f_2\cdots f_s,\qquad f_1^2 Q'(f), \qquad f_t Q'(f)\]
for some $0\leq s\leq n-1$, $1<t\leq n-1$ and some non-commutative monomials $Q'(f)$.
\item The element $\ell Q(f)\in MU(n)$ can be expressed a linear combination of monomials (having the same length as $\ell Q(f)$) of the form:
\[Q'(f)\ell f_1f_2\cdots f_s\]
for some $0\leq s\leq n-1$ and some non-commutative monomials $Q'(f)$.
\end{enumerate}
\end{corollary}
\begin{proof}
Statement (2) follows directly from Lemma \ref{lem:std}, and (1) is a consequence of its proof.
\end{proof}

\subsection{Representations of $MU(n)$} \label{sec:mirareps}
\subsubsection{Weight Spaces}\label{sec:weightspaces}
Recall the theory of weight modules for $\uvsln$ (see for example \cite[\S 5]{J}). To be explicit, we define an equivalence relation on $\ZZ^n$, given by the orbits for the $\ZZ$-action
$$r\cdot (\lambda_1,\ldots,\lambda_n)=(\lambda_1+r,\ldots,\lambda_n+r),\qquad r,\lambda_1,\ldots,\lambda_n\in\ZZ.$$ Then a \emph{weight} will be an equivalence class $\lambda\in\ZZ^n/\ZZ$ and we call $\ZZ^n/\ZZ$ the \emph{weight lattice}. With a slight abuse, we will also refer to a representative $\lambda=(\lambda_1,\ldots,\lambda_n)\in\ZZ^n$ as a weight, when it does not create confusion, and we will often choose the representative with $\lambda_n=0$. Note that we only work with integral weights because we focus on finite dimensional representations. We define the \emph{fundamental weights} $\omega_j=(1,1,\cdots,1,0,0,\cdots,0)=(1^j0^{n-j})$ ($1$ occurs $j$ times) for $0\leq j\leq n-1$, which span the weight lattice $\ZZ^n/\ZZ$ as a $\ZZ$-module. 
With these conventions, the inner product on the weight lattice (notice that it is well defined on equivalence classes and in general it takes values in $\mathbb{Q}$) is given by
\[(\lambda,\mu)=\sum_{i=1}^n\lambda_i\mu_i-\frac{1}{n}\left(\sum_{i=1}^n\lambda_i\right)\left(\sum_{i=1}^n\mu_i\right).\]
We then define the \emph{simple roots} 
$\alpha_i:=(0,\ldots,0,\stackrel{i}{1},\stackrel{i+1}{-1},0,\ldots,0)$ for $1\leq i\leq n-1$, which are dual to the fundamental weights in the sense that
\[(\omega_j,\alpha_i)=\delta_{i,j} \quad\text{ for } 1\leq i,j\leq n-1.\]
The $\ZZ$-submodule of $\ZZ^n/\ZZ$ spanned the simple roots is called the \emph{root lattice}.
Every finite dimensional module $M$ for $\uvsln$ decomposes as a direct sum of weight spaces $M=\bigoplus_{\lambda,\sigma}M_{\lambda,\sigma}$, where $\lambda=(\lambda_1,\ldots,\lambda_{n})$ is a weight, $\sigma=(\sigma_1,\ldots,\sigma_{n-1})\in\{\pm 1\}^{n-1}$ is a choice of signs, and
$$M_{\lambda,\sigma}:=\{x\in M~|~k_i\cdot x=\sigma_i v^{\lambda_i-\lambda_{i+1}}x,\quad\forall i=1,\ldots,n-1\}.$$

If $M=M^\sigma$, where for each $\sigma\in\{\pm 1\}^{n-1}$, $M^\sigma:=\oplus_{\lambda}M_{\lambda,\sigma}$, then we say that $M$ is of type $\sigma$. The category of finite dimensional $\uvsln$-modules decomposes as a direct sum (over all $\sigma$) of the categories of modules of type $\sigma$, and the direct summands are all equivalent to the category of modules of type $\bold{1}=(1,\ldots,1)\in\{\pm 1\}^{n-1}$ (\cite[\S 5.2]{J}). It is therefore enough to restrict to such modules.

We say that a weight $\lambda$ is \emph{dominant} if $(\lambda,\alpha_i)\geq 0$ for all $i$. This is equivalent to $\lambda_1\geq\lambda_2\geq\cdots\geq\lambda_{n}$ (notice that this condition is well defined on equivalence classes), and also to the condition that $\lambda$ is a nonnegative integer linear combination of the fundamental weights.

We define a partial order on the set of weights by saying that $\lambda\geq \mu$ if and only if we can pick representatives for these weights in $\ZZ^n$ 
such that $\lambda-\mu$ is a non-negative integer linear combination of the simple roots $\alpha_i$. If this is the case, we say that $\lambda$ is \emph{higher} than $\mu$.

In the category of modules of type $\bold{1}$, for each dominant integral weight $\lambda$, 
there is a unique simple finite dimensional module $L_\lambda$ of highest weight $\lambda$ and all simple finite dimensional modules are of this form. 

We note at this point that the action of the Weyl group on the weights can just be identified with the action of the symmetric group $\mathcal{S}_n$ on $\ZZ^n$ by permutation of the factors, descended to the equivalence classes.

Now let $M$ be a finite dimensional representation for $MU(n)$, which in particular can be seen as a $\uvsln$-representation given the inclusion of Proposition \ref{prop:comodule-alg}(2). Then, it decomposes as a direct sum of weight spaces for the action of the $k_i$'s. Since $\ell$ is an idempotent, it is diagonalizable with eigenvalues $0$ and $1$. Furthermore, it commutes with all the $k_i$'s, and hence we can further refine the weight spaces to
$$M_{\lambda,\sigma}=M_{\lambda,\sigma,0}\oplus M_{\lambda,\sigma,1}$$
where, for $\epsilon=0,1$,
$$M_{\lambda,\sigma,\epsilon}:=\{x\in M~|~\ell \cdot x=\epsilon x,~k_i\cdot x=\sigma_i v^{\lambda_i-\lambda_{i+1}}x,\quad\forall i=1,\ldots,n-1\}.$$

\begin{definition}
We refer to $M_{\lambda,\sigma}$ as \emph{weight spaces}, and to an element $x\in M_{\lambda,\sigma}$ as a \emph{weight vector}. We will use the terms \emph{mirabolic weight spaces} and \emph{mirabolic weight vectors}, respectively, to refer to $M_{\lambda,\sigma,\epsilon}$ and its elements. So, in particular, a mirabolic weight vector is also a weight vector but the vice-versa is not necessarily true. A (mirabolic) \emph{highest weight vector} is a (mirabolic) weight vector $x$ such that $e_i\cdot x=0$ for all $i=1,\ldots,n-1$.\end{definition}

Exactly like in the case of $\uvsln$, we can define modules of type $\sigma$ for $MU(n)$ and, for the exact same reasons as in \cite[\S 5.2]{J}, we have that the category of finite dimensional modules for $MU(n)$ decomposes as a direct sum of the categories of modules of type $\sigma$, and the automorphism $\widetilde{\sigma}:MU(n)\to MU(n)$ given by
$$\widetilde{\sigma}(e_i)=\sigma_ie_i,\quad \widetilde{\sigma}(f_i)=f_i,\quad\widetilde{\sigma}(k_i^{\pm 1})=\sigma_ik_i^{\pm 1},\quad\widetilde{\sigma}(\ell)=\ell,$$
gives an equivalence of categories between modules of type $\bold{1}$ and modules of type $\sigma$. Therefore, since no information is lost and all the other modules can be obtained by twisting with the automorphism $\widetilde{\sigma}$, from now on we will assume that all modules are of type $\bold{1}$ and we will write (mirabolic) weight spaces more simply as
$$M_{\lambda}:=M_{\lambda,\bold{1}},\quad M_{\lambda,\epsilon}:=M_{\lambda,\bold{1},\epsilon}\qquad\forall \lambda,~\epsilon.$$
\subsubsection{Mirabolic Representations}\label{sec:mir-reps}
Let $V=K^n$ be the defining representation of $\uvsln$, with basis $\{u_1, u_2, \cdots, u_n\}$, 
where the action is given by
\[k_iu_j=v^{\delta_{i,j}-\delta_{i+1,j}}u_j;\qquad e_iu_j=\delta_{i+1,j}u_i;\qquad f_iu_j=\delta_{i,j}u_{i+1}.\]
\begin{definition}For any $0\leq r\leq n$, we consider the quantum wedge product $W_r:=\wedge_v^r(V)\subset V^{\otimes r}$, defined to be the span of all elements of the form
\[u_{i_1}\wedge\cdots\wedge u_{i_r}:=\sum_{\sigma\in \mathcal S_r}(-v^{-1})^{\len(\sigma)}u_{i_{\sigma(1)}}\otimes\cdots\otimes u_{i_{\sigma(r)}}\]
where $1\leq i_1<\cdots<i_r\leq n$, and $\len(\cdot)$ is the length function on the symmetric group $\mathcal S_r$.\end{definition}
\begin{definition}\label{def:lexi}
For any subset $I\subset\{1,\ldots,n\}$, we can order its elements in increasing order, say $I=\{i_1<\ldots<i_r\}$, and we will denote $u_I:=u_{i_1}\wedge\cdots\wedge u_{i_r}$. We also define the lexicographic total order on the set $[n]_r:=\{I\subset\{1,\ldots,n\}~|~\#I=r\}$ by saying that $\{i_1<\ldots<i_r\}< \{i'_1<\ldots< i'_r\}$ if there is an index $1\leq m\leq r$ such that $i_1=i'_1$, \ldots, $i_{m-1}=i'_{m-1}$ and $i_{m}<i'_m$.\end{definition}

Then $W_r:= \wedge_v^r(V)$ is in fact isomorphic to the simple highest weight representation $L_{\omega_r}$ of $\uvsln$ with highest weight $\omega_r=(1^r0^{n-r})$.

\begin{remark}
To write the action of $\uvsln$ on $W_r$, we consider the action on $V^{\otimes r}$, which is given by the iteration of the coproduct, so we get
\[\Delta^{r-1}(k_i)=k_i^{\otimes r};\quad \Delta^{r-1}(e_i)=\sum_{j=0}^{r-1} 1^{\otimes j}\otimes e_i\otimes k_i^{\otimes r-j};\quad \Delta^{r-1}(f_i)=\sum_{j=0}^{r-1} (k_i^{-1})^{\otimes j}\otimes f_i\otimes 1^{\otimes r-j}.\]
Applying that to \[u_I:=u_{i_1}\wedge\cdots\wedge u_{i_r}=\sum_{\sigma\in \mathcal S_r}(-v^{-1})^{\len(\sigma)}u_{i_{\sigma(1)}}\otimes\cdots\otimes u_{i_{\sigma(r)}}\] we get that,
\[ k_i\cdot u_I=\begin{cases}v u_I & \text{ if }i\in I,~ i+1\not\in I,\\
v^{-1} u_I & \text{ if }i\not\in I,~ i+1\in I,\\
u_I & \text{ otherwise,}\end{cases}\quad e_i\cdot u_I=\begin{cases}u_{I\setminus\{i+1\}\cup\{i\}} & \text{ if }i+1\in I,~ i\not\in I,\\
0 & \text{ otherwise,}\end{cases}\]
\[ f_i\cdot u_I=\begin{cases}u_{I\setminus\{i\}\cup\{i+1\}} & \text{ if }i\in I,~ i+1\not\in I,\\
0 & \text{ otherwise.}\end{cases}\]
\end{remark}

\begin{lemma}
We can define an $MU(n)$-action on $W_r$ as follows. For any subset $I\subset\{1,\ldots,n\}$,
define:
\[\ell\cdot u_I=\begin{cases} u_I & \text{ if }1\not\in I,\\
0 & \text{ if }1\in I.   
\end{cases}\]
\end{lemma}
\begin{proof}
We only need to check the relations in Definition \ref{def:mir-sln}.
\[\ell^2\cdot u_I=\begin{cases} \ell\cdot u_I & \text{ if }1\not\in I,\\
\ell\cdot 0 & \text{ if }1\in I \end{cases}=\begin{cases} u_I & \text{ if }1\not\in I, \\
 0 & \text{ if }1\in I\end{cases}=\ell\cdot u_I.\]
 If $c\in \ZZ$ is such that $k_iu_I=v^c u_I$, then for all $i=1,\ldots,n-1$,
 \[k_i\ell\cdot u_I=\begin{cases} k_i\cdot u_I & \text{ if }1\not\in I,\\
k_i\cdot 0 & \text{ if }1\in I \end{cases}=\begin{cases} v^c u_I & \text{ if }1\not\in I,\\ 0 & \text{ if }1\in I\end{cases}=\ell k_i\cdot u_I.\]
Let $i\geq 2$, then $1\in I$ if and only if $1\in I\setminus \{i\}\cup\{i+1\}$, hence
\begin{align*}\ell e_i\cdot u_I&=\begin{cases} \ell \cdot u_{I\setminus \{i\}\cup\{i+1\}} & \text{ if }i\not\in I,~i+1\in I,\\
\ell\cdot 0 & \text{ otherwise } \end{cases} \\
& =\begin{cases} u_{I\setminus \{i\}\cup\{i+1\}} & \text{ if }1,i\not\in I,~i+1\in I, \\ 0 & \text{ otherwise }\end{cases} \\ 
&=e_i\ell\cdot u_I.\end{align*}
Similarly, $1\in I$ if and only if $1\in I\setminus \{i+1\}\cup\{i\}$, hence
\begin{align*}\ell f_i\cdot u_I&=\begin{cases} \ell\cdot u_{I\setminus \{i+1\}\cup\{i\}} & \text{ if }i+1\not\in I,~i\in I,\\
\ell\cdot 0 & \text{ otherwise } \end{cases} \\
& =\begin{cases} u_{I\setminus \{i+1\}\cup\{i\}} & \text{ if }1,i+1\not\in I,~i\in I,\\ 0 & \text{ otherwise }\end{cases} \\ 
&=f_i\ell\cdot u_I.\end{align*}
Then,
\begin{align*}\ell e_1\ell\cdot u_I&=\begin{cases} \ell e_1\cdot u_{I} & \text{ if }1\not\in I, \\
\ell e_1\cdot 0 & \text{ if }1\in I \end{cases} \\
&=\begin{cases} \ell e_1\cdot u_{I} & \text{ if }1\not\in I,\\
 0 & \text{ if }1\in I \end{cases} \\
 &=\begin{cases} \ell e_1\cdot u_{I} & \text{ if }1\not\in I,\\
 \ell e_1\cdot u_{I} & \text{ if }1\in I \end{cases} \\
&=\ell e_1\cdot u_I.\end{align*}
Also,
\begin{align*}f_1\ell\cdot u_I&=\begin{cases} f_1\cdot u_{I} & \text{ if }1\not\in I, \\
f_1\cdot 0 & \text{ if }1\in I \end{cases} \\
&=\begin{cases} 0 & \text{ if }1\not\in I, \\
 0 & \text{ if }1\in I \end{cases} \\
 &=0 \\
 &=\ell f_1\ell\cdot u_I.\end{align*}
We then compute 
\begin{align*}v^{-1}e_1^2\ell\cdot u_I&=\begin{cases}v^{-1}e_1^2\cdot u_{I} & \text{ if }1\not\in I, \\
 0 & \text{ if }1\in I \end{cases} \\
&=\begin{cases} 0 & \text{ if }1\not\in I,~2\not\in I, \\ v^{-1}e_1\cdot u_{I\setminus\{2\}\cup\{1\}} & \text{ if }1\not\in I,~2\in I,\\
 0 & \text{ if }1\in I \end{cases} \\
 &=\begin{cases} 0 & \text{ if }1\not\in I,~2\not\in I, \\ 0 & \text{ if }1\not\in I,~2\in I,\\
 0 & \text{ if }1\in I \end{cases} \\
 &=0,\end{align*}
\begin{align*}v\ell e_1^2\cdot u_I&=\begin{cases}v\ell e_1\cdot u_{I\setminus\{2\}\cup\{1\}} & \text{ if }1\not\in I,~2\in I,\\
 0 & \text{ otherwise }\end{cases} \\
 &=\begin{cases}0 & \text{ if }1\not\in I,~2\in I,\\
 0 & \text{ otherwise }\end{cases} \\
&=0.\end{align*}
Hence
\begin{align*}(v+v^{-1})e_1\ell e_1\cdot u_I&=\begin{cases}(v+v^{-1})e_1\ell\cdot u_{I\setminus\{2\}\cup\{1\}} & \text{ if }1\not\in I,~2\in I,\\
 0 & \text{ otherwise }\end{cases} \\
  &=\begin{cases}0 & \text{ if }1\not\in I,~2\in I,\\
 0 & \text{ otherwise }\end{cases} \\
 &=0 \\
 &= (v\ell e_1^2+v^{-1}e_1^2\ell)\cdot u_I.\end{align*}
We then compute 
\begin{align*}vf_1^2\ell\cdot u_I&=\begin{cases}vf_1^2\cdot u_{I} & \text{ if }1\not\in I, \\
 0 & \text{ if }1\in I \end{cases} \\
 &=\begin{cases}0 & \text{ if }1\not\in I, \\
 0 & \text{ if }1\in I \end{cases} \\
 &=0,
\end{align*}
\begin{align*}v^{-1}\ell f_1^2\cdot u_I&=\begin{cases}v^{-1}\ell f_1\cdot u_{I\setminus\{1\}\cup\{2\}} & \text{ if }1\in I,~2\not\in I, \\
 0 & \text{ otherwise }\end{cases} \\
 &=\begin{cases}0 & \text{ if }1\in I,~2\not\in I,\\
 0 & \text{ otherwise }\end{cases} \\
&=0.\end{align*}
Hence
\begin{align*}(v+v^{-1})f_1\ell f_1\cdot u_I&=\begin{cases}(v+v^{-1})f_1\ell\cdot u_{I\setminus\{1\}\cup\{2\}} & \text{ if }1\in I,~2\not\in I,\\
 0 & \text{ otherwise }\end{cases} \\
 &=\begin{cases}(v+v^{-1})f_1\cdot u_{I\setminus\{1\}\cup\{2\}} & \text{ if }1\in I,~2\not\in I,\\
 0 & \text{ otherwise }\end{cases} \\
&=\begin{cases}0 & \text{ if }1\in I,~2\not\in I,\\
 0 & \text{ otherwise }\end{cases} \\
 &=0 \\
 &= (vf_1^2\ell+v^{-1}\ell f_1^2)\cdot u_I.\end{align*}
\end{proof}
\begin{remark}
Notice that $W_0$ and $W_n$ are both the trivial one dimensional representations of weight $(0,\ldots,0)=(1,\ldots,1)$ for $\uvsln$, with the difference that $\ell$ acts on $W_0$ by the identity and on $W_n$ by $0$.     
\end{remark}

\begin{remark}
The action of $MU(n)$ on the representation $W_r$ can be viewed as being given by `quantum difference operators'. (See \cite[5A.6]{J} for more details regarding $\uvsln$-representations.) On the polynomial ring $K[y_1, y_2,\cdots, y_k]$, we define the operators $\mm_{y_t}^{\pm1}$ and $\dd_{y_t}$ for $1\leq t\leq k$ by:
\[(\mm_{y_t}^{\pm1}\cdot f)(y_1,y_2,\dots, y_k): = f(y_1,y_2,\cdots, v^{\pm1}y_t,\cdots, y_k),\]
\[\dd_{y_t}\cdot f:=\frac{\mm_{y_t}\cdot f - \mm_{y_t}^{-1}\cdot f }{(v-v^{-1})y_t}.
\]
For any $1\leq r\leq n$, we construct the $n\times r$ matrix of indeterminates $X:=(x_{i,j})$. Consider the polynomial ring $\CC[x_{i,j}:1\leq i\leq n, 1\leq j\leq r]$ on which we have the operators $\dd_{i,j}:=\dd_{x_{i,j}}$ and $\mm_{i,j}^{\pm1}:=\mm_{x_{i,j}}^{\pm1}$. The $MU(n)$-representation $W_r$ can be viewed as the subspace of this ring spanned by all $r\times  r$ minors of the matrix $X$, on which the action of $MU(n)$ is given by the following operators:
\begin{align*}
e_i \mapsto \sum_{t=1}^r x_{i,t} \dd_{i+1,t},& \qquad \qquad f_i \mapsto \sum_{t=1}^r x_{i+1,t} \dd_{i,t},\\
k_i^{\pm 1} \mapsto  \sum_{t=1}^r \mm_{i,t}^{\pm1}\mm_{i+1,t}^{\mp1},& \qquad \qquad 1-\ell\mapsto \sum_{t=1}^r x_{1,t} \dd_{1,t}.
\end{align*}
We note that the action of $1-\ell$ is given by an `Euler-type' difference operator that degenerates to the Euler vector field in the top-row variables as $v\to 1$.
\end{remark}

\begin{definition}
Let $\lambda$ be a dominant integral weight, $L_{\lambda}$ be the simple representation of $\uvsln$ with highest weight $\lambda$, and $0\leq r\leq n$. We define $L_{\lambda, r}:=L_\lambda\otimes W_r$, and we note that this has the structure of an $MU(n)$-representation via the comodule map $\rho$.
\end{definition}

Let $x_{\lambda}$ be a highest weight vector of the $\uvsln$-representation $L_{\lambda}$ and let $x_{\lambda,r}: = x_{\lambda} \otimes u\in L_{\lambda,r}$, where $u=u_{\{1,\ldots,r\}}=u_1\wedge u_2\wedge \cdots \wedge u_r \in W_r$. It is clear that $x_{\lambda,r}$ is a mirabolic highest weight vector of the $MU(n)$-representation $L_{\lambda,r}$.

\begin{example}
Consider the dominant integral weight $\lambda=(2,1,0)$ for $U_v(\mathfrak{sl}_3)$. Here, we diagrammatically represent the mirabolic weight space decompositions of the $MU(3)$-representations $L_{\lambda,0}$ and $L_{\lambda,1}$.

In the diagram below, each circular dot represents a one-dimensional weight space whereas the square represents a two-dimensional space. The solid lines indicate the action of $f_1$ and the dashed lines indicate the action of $f_2$. We have labeled the vertices with the $(k_1,k_2;\ell)$-weights. (The central vertex represents the zero weight space.)

\begin{center}
\begin{tikzpicture}[xscale=0.4,yscale=0.4]
\draw(-22,14) node {$L_{\lambda,0}:$};

\draw(-13,18.7) node {$(v,v;0)$};
\filldraw (-12,18) circle(5pt);

\draw(-7,18.7) node {$(v^{-1},v^2;0)$};
\filldraw (-8,18) circle(5pt);

\draw(-16.5,14) node {$(v^2,v^{-1};0)$};
\filldraw (-14,14) circle(5pt);

\filldraw[very thick] (-10.15,13.85) rectangle (-9.85,14.15);

\draw(-3.7,14) node {$(v^{-2},v;0)$};
\filldraw (-6,14) circle(5pt);

\draw(-13,9.2) node {$(v,v^{-2};0)$};
\filldraw (-12,10) circle(5pt);

\draw(-7,9.2) node {$(v^{-1},v^{-1};0)$};
\filldraw (-8,10) circle(5pt);

\draw[mid arrow] (-12,18) to (-8,18) [thin ];
\draw[mid arrow] (-14,14) to (-10,14) [thin ];
\draw[mid arrow] (-10,14) to (-6,14) [thin ];
\draw[mid arrow] (-12,10) to (-8,10) [thin ];

\draw[mid arrow] (-12,18) to (-14,14) [dashed];
\draw[mid arrow] (-8,18) to (-10,14) [dashed];
\draw[mid arrow] (-10,14) to (-12,10) [dashed];
\draw[mid arrow] (-6,14) to (-8,10) [dashed];
\end{tikzpicture}
\end{center}

The following diagram follows the same conventions as the one above, except we have only indicated the highest and lowest weights. The three layers represent the subspaces $L_{\lambda}\otimes\{u_1\}, L_{\lambda}\otimes\{u_2\}$ and $L_{\lambda}\otimes\{u_3\}$ of $L_{\lambda, 1}$ respectively from top to bottom.

\begin{center}
\begin{tikzpicture}[xscale=0.5,yscale=0.4]
\draw(-18,6) node {$L_{\lambda,1}:$};

\draw(-13,16.8) node {$(v^2,v;0)$};
\filldraw (-12,16) circle(5pt);
\filldraw (-8,16) circle(5pt);
\filldraw (-14,14) circle(5pt);
\filldraw[very thick] (-10.15,13.85) rectangle (-9.85,14.15);
\filldraw (-6,14) circle(5pt);
\filldraw (-12,12) circle(5pt);
\filldraw (-8,12) circle(5pt);

\draw[mid arrow] (-12,16) to (-8,16) [thin ];
\draw[mid arrow] (-14,14) to (-10,14) [thin ];
\draw[mid arrow] (-10,14) to (-6,14) [thin ];
\draw[mid arrow] (-12,12) to (-8,12) [thin ];

\draw[mid arrow] (-12,16) to (-14,14) [dashed];
\draw[mid arrow] (-8,16) to (-10,14) [dashed];
\draw[mid arrow] (-10,14) to (-12,12) [dashed];
\draw[mid arrow] (-6,14) to (-8,12) [dashed];

\draw[mid arrow] (-12,16) to (-8,8) [thin ];
\draw[mid arrow] (-8,16) to (-4,8) [thin ];
\draw[mid arrow] (-14,14) to (-10,6) [thin ];
\draw[mid arrow] (-10,14) to (-6,6) [thin ];
\draw[mid arrow] (-6,14) to (-2,6) [thin ];
\draw[mid arrow] (-12,12) to (-8,4) [thin ];
\draw[mid arrow] (-8,12) to (-4,4) [thin ];

\filldraw (-8,8) circle(5pt);
\filldraw (-4,8) circle(5pt);
\filldraw (-10,6) circle(5pt);
\filldraw[very thick] (-6.15,5.85) rectangle (-5.85,6.15);
\filldraw (-2,6) circle(5pt);
\filldraw (-8,4) circle(5pt);
\filldraw (-4,4) circle(5pt);

\draw[mid arrow] (-8,8) to (-4,8) [thin ];
\draw[mid arrow] (-10,6) to (-6,6) [thin ];
\draw[mid arrow] (-6,6) to (-2,6) [thin ];
\draw[mid arrow] (-8,4) to (-4,4) [thin ];

\draw[mid arrow] (-8,8) to (-10,6) [dashed];
\draw[mid arrow] (-4,8) to (-6,6) [dashed];
\draw[mid arrow] (-6,6) to (-8,4) [dashed];
\draw[mid arrow] (-2,6) to (-4,4) [dashed];

\draw[mid arrow] (-8,8) to (-10,0) [dashed];
\draw[mid arrow] (-4,8) to (-6,0) [dashed];
\draw[mid arrow] (-10,6) to (-12,-2) [dashed];
\draw[mid arrow] (-6,6) to (-8,-2) [dashed];
\draw[mid arrow] (-2,6) to (-4,-2) [dashed];
\draw[mid arrow] (-8,4) to (-10,-4) [dashed];
\draw[mid arrow] (-4,4) to (-6,-4) [dashed];

\filldraw (-10,0) circle(5pt);
\filldraw (-6,0) circle(5pt);
\filldraw (-12,-2) circle(5pt);
\filldraw[very thick] (-8.15,-2.15) rectangle (-7.85,-1.85);
\filldraw (-8,-2) circle(5pt);
\filldraw (-4,-2) circle(5pt);
\filldraw (-10,-4) circle(5pt);
\draw(-5,-4.8) node {$(v^{-1},v^{-2};1)$};
\filldraw (-6,-4) circle(5pt);

\draw[mid arrow] (-10,0) to (-6,0) [thin ];
\draw[mid arrow] (-12,-2) to (-8,-2) [thin ];
\draw[mid arrow] (-8,-2) to (-4,-2) [thin ];
\draw[mid arrow] (-10,-4) to (-6,-4) [thin ];

\draw[mid arrow] (-10,0) to (-12,-2) [dashed];
\draw[mid arrow] (-6,0) to (-8,-2) [dashed];
\draw[mid arrow] (-8,-2) to (-10,-4) [dashed];
\draw[mid arrow] (-4,-2) to (-6,-4) [dashed];
\end{tikzpicture}
\end{center}

(Similar diagrams for $MU(2)$-representations were given in \cite[Example 5.9]{R15}.)
\end{example}

\begin{lemma} \label{lem:highcyclic}
The space $L_{\lambda,r}$ is generated by the vector $x_{\lambda,r}$ as an $MU(n)$-module.
\end{lemma}
\begin{proof}
When $r=0$ or $r=n$, the result is clear since $L_{\lambda,0}=\pi_1^*(L_{\lambda})$ and $L_{\lambda,n}=\pi_0^*(L_{\lambda})$ are the pullbacks along the maps defined in Prop. \ref{prop:comodule-alg}(1), and $L_{\lambda}$ is a simple $\uvsln$-module. Therefore, we assume that $1\leq r\leq n-1$. 

Note that the space $L_{\lambda}$ is spanned by elements of the form $Q(f)\cdot x_{\lambda}$, where $Q(f)$ is a non-commutative monomial in the $f_i$'s.
We start by proving the claim that for all non-commutative monomials $Q(f)$ and $r$-element subsets $I\sub \{1,2,\dots, r+1\}$, we have
\[(Q(f)\cdot x_{\lambda})\otimes u_I\in MU(n)\cdot x_{\lambda,r}.\]
We proceed by induction on the length of $Q(f)$ in terms of the $f_i$'s.
 For the base case, suppose $\deg(Q(f))=0$. We need to show that $x_{\lambda}\otimes u_I \in MU(n)\cdot x_{\lambda,r}$. To this end, we compute:
\begin{align*}\ell f_1f_2 \cdots f_r\cdot x_{\lambda,r} &= \ell f_1f_2 \cdots f_r \cdot (x_{\lambda}\otimes u) \\
&= (1\otimes \ell)\left(k_1^{-1}\cdots k_r^{-1}x_\lambda\otimes u_{\{2,\ldots,r+1\}}+\sum_{J\in[n]_r,~1\in J}x_J\otimes u_J\right)\\
&= v^{\sum_{j=1}^r(\lambda_{j+1}-\lambda_j)}x_{\lambda}\otimes u_{\{2,\ldots,r+1\}}.\end{align*}
Hence $x_{\lambda}\otimes u_{\{2,\ldots,r+1\}}\in MU(n)\cdot x_{\lambda,r}$. Next, for any $1\leq i\leq r$, we see that:
\[e_i e_{i-1}\cdots e_1 \cdot (x_{\lambda}\otimes u_{\{2,\ldots,r+1\}}) = x_{\lambda}\otimes u_{I_{i+1}},\]
where $I_i =\{1,2,\dots, r+1\}\sm\{i\}$. This proves the base case. Now suppose that the claim is true for a given $Q(f)$, so in particular we have \[(Q(f)\cdot x_{\lambda})\otimes  u,~(Q(f)\cdot x_{\lambda})\otimes  u_{\{2,\ldots,r+1\}} \in MU(n)\cdot x_{\lambda,r}.\]
Then for all $j\neq r+1$ (this condition is vacuous when $r=n-1$), we have that:
\[f_j\cdot ((Q(f)\cdot x_{\lambda})\otimes u_{\{2,\ldots,r+1\}}) = (f_j Q(f)\cdot x_{\lambda})\otimes u_{\{2,\ldots,r+1\}}\in MU(n)\cdot x_{\lambda,r}\]
since $f_j\cdot u_{\{2,\ldots,r+1\}}=0$.
And we also have that:
\[f_{r+1}\cdot ((Q(f)\cdot x_{\lambda})\otimes u) = (f_{r+1} Q(f)\cdot x_{\lambda})\otimes u\in MU(n)\cdot x_{\lambda,r},\]
since $f_{r+1}\cdot u=0$.
Then, just as in the base case, we observe that:
\[\ell f_1f_2 \cdots f_r\cdot ((f_{r+1} Q(f)\cdot x_{\lambda})\otimes u) = d(f_{r+1} Q(f)\cdot x_{\lambda})\otimes u_{\{2,\ldots,r+1\}},\]
where $d$ is the constant from the equation $d(f_{r+1} Q(f)\cdot x_{\lambda}) = k_1^{-1}\cdots k_r^{-1}(f_{r+1} Q(f)\cdot x_{\lambda})$.

This shows that $(\widetilde Q(f)\cdot x_{\lambda})\otimes u_{\{2,\ldots,r+1\}} \in MU(n)\cdot x_{\lambda,r}$ whenever $\deg\widetilde Q(f)\leq\deg Q(f)+1$.
Finally, we compute for $1\leq i\leq r-1$:
\begin{align}
&e_i e_{i-1}\cdots e_1 \cdot ((\widetilde Q(f)\cdot x_{\lambda})\otimes u_{\{2,\ldots,r+1\}})\\
&= \sum_{j=0}^{i}(e_i e_{i-1}\cdots e_{j+1}\widetilde Q(f)\cdot x_{\lambda})\otimes (k_i\cdots k_{j+1} e_{j}\cdots e_1\cdot u_{\{2,\ldots,r+1\}}). \label{eq:bigsum1}
\end{align}
Since $e_k\cdot x_{\lambda}=0$ for all $k$, we have:
\begin{align*}
e_ie_{i-1}\cdots e_{j+1}\widetilde Q(f)\cdot x_{\lambda}
&=e_ie_{i-1}\cdots e_{j+2}[e_{j+1},\widetilde Q(f)]\cdot x_{\lambda}\\
&=\vdots\\
&=[e_i,[e_{i-1},[\cdots,[e_{j+1},Q'(f)]\cdots]]]\cdot x_{\lambda}.
\end{align*}
The iterated commutator in the last equality above can be simplified using the relation~\eqref{eq:usln.3}  to get that it is equal to a linear combination of monomials in the $f_t$'s each having length lesser than or equal to $\deg Q(f)$ (up to a factor of some $k_t$'s). Then, by the induction assumption, we have that the summands in \eqref{eq:bigsum1} for $j<i$ all lie in $MU(n)\cdot x_{\lambda,r}$.
This implies that the same must be true for the summand when $j=i$, which is equal to:
\[(\widetilde Q(f)\cdot x_{\lambda})\otimes (e_{i} e_{i-1}\cdots e_1\cdot u_{\{2,\ldots,r+1\}})=(\widetilde Q(f)\cdot x_{\lambda})\otimes u_{I_i},\]
where $I_i=\{1,2,\dots, r+1\}\sm\{i\}$. This completes the proof of the claim.

To conclude the proof of the Lemma, we use induction on the totally ordered set $[n]_r$ with the lexicographic order, as defined in Def. \ref{def:lexi}, to show that $L_\lambda\otimes \{u_I\}\subset MU(n)\cdot x_{\lambda,r}$ for all $I\in[n]_r$. The claim that we have proved shows that $L_\lambda\otimes \{u\}\subset MU(n)\cdot x_{\lambda,r}$ which is our base case. Now suppose that $I=\{i_1,\ldots,i_r\}\neq\{1,\ldots,r\}$, we want to prove that for any weight vector $x\in L_\lambda$, $x\otimes u_I\in MU(n)\cdot x_{\lambda,r}$. Since $I\neq\{1,\ldots,r\}$, there exists an index $1\leq j\leq r$ such that $i_j-1\in [n]\setminus I$ and therefore $I'_j:=I\setminus\{i_j\}\cup\{i_j-1\}\in[n]_r$. By induction hypothesis, since $I'_j<I$ in the lexicographic order, $x\otimes u_{I'_j}$, $f_{i_j-1}x\otimes u_{I'_j}\in MU(n)\cdot x_{\lambda,r}$. We compute
\begin{align*}f_{i_j-1}\cdot\left(x\otimes u_{I'_j}\right)&=k_{i_j-1}^{-1}x\otimes f_{i_j-1}u_{I'_j}+f_{i_j-1}x\otimes u_{I'_j}\\
&=cx\otimes u_I+f_{i_j-1}x\otimes u_{I'_j}\end{align*}
where $0\neq c$ equals a power of $v$ because $x$ is a weight vector. Since all the other terms are in $MU(n)\cdot x_{\lambda,r}$, then so is $cx\otimes u_I$, which concludes the proof.
\end{proof}

\begin{proposition} \label{prop:irred}
Given a dominant integral weight $\lambda$, and $r$ satisfying $0\leq r\leq n$, the $MU(n)$-representation $L_{\lambda, r}$ is irreducible.
\end{proposition}

\begin{proof}
Again, when $r=0$ or $r=n$, the claim is clear since $L_{\lambda,0}=\pi_1^*(L_{\lambda})$ and $L_{\lambda,n}=\pi_0^*(L_{\lambda})$, and $L_{\lambda}$ is a simple $\uvsln$-module. Therefore, we assume that $1\leq r\leq n-1$.

Let $0\neq M\subset L_{\lambda,r}$ be an $MU(n)$-submodule, and let $0\neq x\in M$ be a highest weight vector. Our goal will be to prove that $x_{\lambda,r}\in M$, which will show that $M=L_{\lambda,r}$ by the previous lemma. 

Using the total order on $[n]_r$, we can write:
\[x=x_{I}\otimes u_I + \sum_{J\in [n]_r,~ J>I} x_J\otimes u_J,\]
where $I\in [n]_r$, $x_I,x_J\in L_\lambda$, $x_I\neq 0$. We call $x_{I}\otimes u_I$ the leading term of $x$. Note that if $I\not=\{1,2,\dots r\}$, then there exists some $j$ such that $j\not\in I$ and $j+1\in I$. In that case, $e_j$ acts non-trivially on $u_I$ and we have that $(I\sm \{j+1\})\cup{j}<I$. For such a $j$, this implies that $x_I\otimes (e_j\cdot u_I)$ is the leading term of $e_j\cdot x$, which in particular shows that $e_j\cdot x\neq 0$. However, since $x$ was assumed to be a highest weight vector, such a $j$ can not exist, implying that $I=\{1,2,\dots r\}$ and that the sum is over all $J\neq I$.

Next, we inductively construct a sequence of non-zero vectors $x_j$ in $MU(n)\cdot x$ for $1\leq j\leq r$ where:
\[x_j=\sum_{\substack{\{1,2,\dots, j\}\sub J}} x_{j,J}\otimes u_J,\]
for some $x_{j,J}\in L_{\lambda}$, such that $e_t\cdot x_j=0$ for all $t$, that is, $x_j$ is a highest weight vector. For the base case of the induction, since 
\[\ell\cdot(x_J\otimes u_J)=x_J\otimes \ell \cdot u_J=\begin{cases}0 & \text{ if }1\in J,\\ x_J\otimes u_J & \text{ if }1\not\in J,\end{cases}\]
we have
\[(1-\ell)\cdot x=x_I\otimes u_I+\sum_{J>I, ~1\in J}x_J\otimes u_J\] 
and by successively acting with the $e_t$'s we get to a vector $x_1\neq 0$ on which all the $e_t$'s act by zero. Notice that the condition $1\in J$ still holds for all the summands.

Next, suppose $x_{j}$ has been constructed for some $1\leq j\leq r-1$. By assumption, we have that $e_t\cdot x_j=0$ for all $t$. This implies that $e_t\cdot x_{j,J}=0$ for all $1\leq t\leq j$ and all $J$. Furthermore, since we also have $e_t\cdot x_j=0$ for $j+1\leq t\leq n-1$, there must be some $J$ such that $j+1\in J$ and $x_{j,J}\neq 0$.

We compute $\ell f_1\cdots f_j \cdot x_{j}$. Note that if $\{1,2,\dots, j+1\}\sub J$, it is clear that $\ell f_1\cdots f_{j} \cdot (w\otimes u_{J})=0$ for any $w$.
Therefore,
\begin{align*}
\ell f_1\cdots f_j \cdot x_j&=\sum_{\substack{\{1,2,\dots, j\}\sub J}} \ell f_1\cdots f_j \cdot(x_{j,J}\otimes u_J)\\
&=\sum_{\substack{\{1,2,\dots, j\}\sub J\\ j+1\not\in J}} \ell f_1\cdots f_j \cdot(x_{j,J}\otimes u_J)\\
&=\sum_{\substack{\{1,2,\dots, j\}\sub J\\ j+1\not\in J}}  (k_1^{-1}\cdots k_{j}^{-1}\cdot x_{j,J})\otimes (f_1\cdots f_j\cdot u_J)\\
&=\sum_{\substack{\{1,2,\dots, j\}\sub J\\ j+1\not\in J}}  c_{j,J} x_{j,J}\otimes (f_1\cdots f_j\cdot u_J),
\end{align*}
where the constants $c_{j,J}$ are determined by $c_{j,J}x_{j,J}=k_1^{-1}\cdots k_{j}^{-1}\cdot x_{j,J}$. Then, it follows that:
\begin{align*}
e_j\cdots e_1 \ell f_1\cdots f_j \cdot x_j&=\sum_{\substack{\{1,2,\dots, j\}\sub J\\ j+1\not\in J}}  c_{j,J} e_j\cdots e_1\cdot( x_{j,J}\otimes (f_1\cdots f_j\cdot u_J))\\
&=\sum_{\substack{\{1,2,\dots, j\}\sub J\\ j+1\not\in J}}  c_{j,J} x_{j,J}\otimes (e_j\cdots e_1 f_1\cdots f_j\cdot u_J))\\
&=\sum_{\substack{\{1,2,\dots, j\}\sub J\\ j+1\not\in J}}  c_{j,J} x_{j,J}\otimes u_J.
\end{align*}
Finally, we note that for all $J$'s above,
\[c_{j,J} x_{j,J}\otimes u_J = (k_1^{-1}\cdots k_{j}^{-1}\cdot x_{j,J})\otimes u_J = v(k_1^{-1}\cdots k_{j}^{-1})\cdot (x_{j,J}\otimes u_J)\]
Therefore, we can define:
\begin{align*}
\widetilde{x_{j+1}}:&=x_j - v^{-1} k_j\cdots k_{1} e_j\cdots e_1\ell f_1\cdots f_j \cdot x_j\\
&=\sum_{\substack{\{1,2,\dots, j+1\}\sub J}} x_{j,J}\otimes u_J \in MU(n)\cdot x.
\end{align*}
Note that we had concluded that there must be some $J$ such that $x_{j,J}\neq 0$ and $j+1\in J$. Therefore, we must have $\widetilde{x_{j+1}}\neq 0$. Finally, we can successively apply $e_t$'s on it till we get a vector $x_{j+1}$ on which each of the $e_t$'s act by zero. This completes the induction step.

Note that the element $x_r$ constructed in the sequence is exactly equal to $x_{r,I}\otimes u$ for some $x_{r,I}\in L_{\lambda}$. Furthermore, since it is a highest weight vector, it must be equal to (a non-zero scalar multiple of) $x_{\lambda,r}$ as the $\uvsln$-representation $L_{\lambda}$ has a unique highest weight vector (up to scalar multiples). This completes the proof of the lemma.
\end{proof}
\begin{proposition} \label{prop:nonisom}
The representations $L_{\lambda,r}$ are pairwise non-isomorphic.
\end{proposition}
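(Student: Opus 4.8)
The idea I would pursue is that an $MU(n)$-module isomorphism $\phi\colon L_{\lambda,r}\to L_{\mu,s}$ automatically respects the action of $\ell$ and the underlying $\uvsln$-module structure, and that these two pieces of data already recover $r$ and $\lambda$ respectively; so the whole proof becomes a matter of extracting the right numerical and weight invariants from $\phi$. First I would recover $r$ from how $\ell$ acts, and then recover $\lambda$ by restricting to $\uvsln$.

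\emph{Recovering $r$.} Since $\rho(\ell)=1\otimes\ell$, on $L_{\lambda,r}=L_\lambda\otimes W_r$ the generator $\ell$ acts as $\operatorname{id}_{L_\lambda}\otimes\ell_{W_r}$, and as $\ell\cdot u_I=u_I$ for $1\notin I$ while $\ell\cdot u_I=0$ for $1\in I$, the $1$- and $0$-eigenspaces of $\ell$ on $W_r$ are spanned by the $u_I$ with $I\subseteq\{2,\dots,n\}$ and with $1\in I$ respectively. Hence
\[\dim(\ell\cdot L_{\lambda,r})=\dim L_\lambda\cdot\binom{n-1}{r},\qquad \dim\big((1-\ell)L_{\lambda,r}\big)=\dim L_\lambda\cdot\binom{n-1}{r-1}.\]
Any $MU(n)$-module isomorphism is $\ell$-equivariant, so it restricts to linear isomorphisms of the corresponding eigenspaces and therefore preserves both of these dimensions, as well as $\dim L_{\lambda,r}=\dim L_\lambda\binom{n}{r}$. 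Writing $a=\dim(\ell L_{\lambda,r})$ and $b=\dim((1-\ell)L_{\lambda,r})$, the elementary identity $(n-r)\binom{n-1}{r-1}=r\binom{n-1}{r}$ (which also holds in the degenerate cases $r=0$ and $r=n$, where one eigenspace vanishes) gives $(n-r)b=ra$, and the same relation with $s$ in place of $r$ holds for $L_{\mu,s}$. Subtracting the two and using $a+b=\dim L_{\lambda,r}>0$ then forces $r=s$.

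\emph{Recovering $\lambda$.} Restricting $\phi$ along the algebra inclusion $\iota\colon\uvsln\hookrightarrow MU(n)$ of Proposition~\ref{prop:comodule-alg}(2), and observing that under $\rho$ the $\uvsln$-part of the action on $L_{\lambda,r}$ is exactly the tensor-product action on $L_\lambda\otimes L_{\omega_r}$, we get a $\uvsln$-module isomorphism $L_\lambda\otimes W_r\cong L_\mu\otimes W_r$. Taking $\uvsln$-characters and cancelling $\operatorname{ch}(W_r)\neq 0$ in the group ring of the weight lattice (an integral domain) gives $\operatorname{ch}(L_\lambda)=\operatorname{ch}(L_\mu)$, so $\lambda=\mu$ since the highest weight of a finite-dimensional simple $\uvsln$-module is the maximal weight occurring in its character. (Equivalently: $\lambda+\omega_r$ is the unique maximal weight of $L_\lambda\otimes W_r$, being the weight of the highest weight vector $x_\lambda\otimes u_{\{1,\dots,r\}}$ while every weight of the tensor product is $\leq\lambda+\omega_r$; an isomorphism preserves it, so $\lambda+\omega_r=\mu+\omega_r$.) Combined with the previous step this yields $(\lambda,r)=(\mu,s)$, as desired.

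I do not anticipate any genuine obstacle here; the argument is essentially bookkeeping. The two points that need a little care are getting the $\ell$-eigenspace dimensions right in the first step (in particular checking the boundary values $r\in\{0,n\}$ of the binomial identity, where the $r=s$ deduction degenerates but still goes through), and correctly identifying the top weight $\lambda+\omega_r$ of the restricted $\uvsln$-module in the second step.
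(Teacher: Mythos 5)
Your proposal is correct and follows essentially the same route as the paper: it recovers $r$ from the dimensions of the $\ell$-eigenspaces (the paper uses the ratio $\binom{n-1}{r-1}/\binom{n-1}{r}=r/(n-r)$, which is your identity in disguise), and then recovers $\lambda$ from the highest/maximal weight $\lambda+\omega_r$ of the underlying $\uvsln$-module. Your handling of the boundary cases $r\in\{0,n\}$ and the character-cancellation variant are just slightly more explicit versions of the same argument.
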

\begin{proof}
To prove this, it suffices to show that if $(\lambda, r)\neq (\mu,s)$, then the representations $L_{\lambda,r}$ and $L_{\mu,s}$ do not have the same mirabolic weight space decomposition. We note that the $0$-eigenspace of $\ell$ in $L_{\lambda,r}$ has dimension $\dim(L_{\lambda}){n-1 \choose r-1}$ and the $1$-eigenspace has dimension $\dim(L_{\lambda}){n-1 \choose r}$. The ratio of these two dimensions is $[{n-1 \choose r-1}:{n-1 \choose r}]=[r:n-r]$. This uniquely determines the value of $r$, and so, we conclude that if $L_{\lambda,r}$ and $L_{\mu,s}$ have the same mirabolic weight space decomposition, then $r=s$. Next, since $\lambda+\omega_r$ is a weight of $L_{\lambda,r}$, it must also be a weight of $L_{\mu,r}$, whose weights are upper bounded by $\mu+\omega_r$, showing that $\lambda+\omega_r\leq \mu+\omega_r$, and hence $\lambda\leq\mu$. Similarly, we must also have $\mu\leq \lambda$, which implies that $\lambda=\mu$, completing the proof.
\end{proof}

\section{Depth of a representation}\label{sec:proof-prop}

In this section, we define the notion of `depth' of an $MU(n)$-representation and prove some properties that will be useful in the proof of the classification theorems.

\begin{definition}\label{def:type-r}
Let $M$ be a module for $MU(n)$, and let $x\in M$ be a weight vector. For $0\leq r\leq n$, we say that the $x$ has \emph{depth} $r$ if 
$$\ell f_1\cdots f_j \cdot x= 0\qquad \text{ for all }0\leq j<r,$$ 
and
$$\ell f_1f_2\cdots f_r\cdot  x\neq 0 \quad \text{ (this condition doesn't apply when $r=n$).}$$

If $M\neq 0$, we define the \emph{depth} of $M$ to be the maximum of the depth of $x$, where $x$ varies among all nonzero weight vectors in $M$.
\end{definition}
\begin{remark}
The $0$ vector in any module has depth $n$.
\end{remark}

\begin{example} \label{ex:depth}
Let $\lambda$ be a dominant integral weight and $0\leq r\leq n$. Pick any non-zero weight vector $x\in L_{\lambda}$ and $I\in [n]_r$. Then, the vector $x\otimes u_I$ is a weight vector of $L_{\lambda,r}$ whose depth is $s$, where $s$ is chosen to be the maximum number such that $\{1,2,\dots, s\}\sub I$. In particular, the depth of the module $L_{\lambda,r}$ is precisely $r$, which is what motivates the above definition.
\end{example}

\begin{lemma}
Let $M\neq 0$ be a module of $MU(n)$, then the depth of $M$ is also equal to the maximum of the depth of $x$, where $x$ varies among all nonzero mirabolic weight vectors in $M$. 
\end{lemma}
\begin{proof}
All mirabolic weight vectors are also weight vectors, so all that is needed here is to show that for all $0\leq r\leq n$, if there is a weight vector of depth $r$, then there is also a mirabolic weight vector of depth $r$. First suppose that $x$ is a weight vector of depth $0$, which means that $\ell \cdot x\neq 0$. Then $y=\ell\cdot x$ is a mirabolic weight vector such that $\ell\cdot y=\ell\cdot(\ell\cdot x)=\ell\cdot x=y$ so $y$ has depth $0$. Finally, if $x$ is a weight vector of depth $r$, with $r\geq 1$, then $\ell\cdot x=0$, so $x$ is a mirabolic weight vector.
\end{proof}

\begin{lemma}\label{lem:highest-depth} Let $M\neq 0$ be a finite dimensional module of $MU(n)$, then the depth of $M$ is also equal to the maximum of the depth of $x$, where $x$ varies among all nonzero highest weight vectors in $M$. 
\end{lemma}
\begin{proof}
To prove the lemma, we show that if $y$ is a weight vector of depth $r$, then for all $i=1,\ldots,n-1$, the vector $e_i\cdot y$ is a weight vector of depth $\geq r$. This gives the result because if there is a vector $y$ of maximum depth that is not a highest weight vector, then there is an index $i$ such that $e_i\cdot y\neq 0$, and $e_i\cdot y$ also has maximum depth. By iterating this procedure we eventually get a highest weight vector of maximum depth.

Suppose that $\ell f_1\cdots f_k \cdot y=0$, for all $0\leq k<r$, then we show that we also have $\ell f_1\cdots f_k e_i\cdot y=0$ for all $0\leq k<r$.
If $r=0$ the statement is vacuously true, so suppose that $r\geq 1$. For $k=0$, we have that $\ell\cdot y=0$ and $\ell e_iy=\ell e_i\ell\cdot y=\ell e_i\cdot(0)=0$.
Suppose that $k\geq 1$, and consider $\ell f_1\cdots f_k e_i\cdot y$. If $k+1\leq i\leq n-1$, then $\ell f_1\cdots f_k e_i\cdot y=e_i \ell f_1\cdots f_k\cdot y=0$. If $1\leq i\leq k$, then
\begin{align*}
\ell f_1\cdots f_k e_i\cdot y &= \ell f_1\cdots f_{i-1} f_i e_i f_{i+1}\cdots f_k\cdot y \\
&=  \ell f_1\cdots f_{i-1}\left( e_if_i -\frac{k_i-k_i^{-1}}{v-v^{-1}}\right)f_{i+1}\cdots f_k\cdot y \\
&=  \ell f_1\cdots f_{i-1}e_if_if_{i+1}\cdots f_k\cdot y -\ell f_1\cdots f_{i-1}\frac{k_i-k_i^{-1}}{v-v^{-1}}f_{i+1}\cdots f_k\cdot  y \\
&= \ell e_if_1\cdots f_{i-1}f_if_{i+1}\cdots f_k \cdot y - \frac{v^{-1}k_i-vk_i^{-1}}{v-v^{-1}}\ell f_1\cdots f_{i-1}f_{i+1}\cdots f_k\cdot y \\
&= \ell e_i\ell f_1\cdots f_{i-1}f_if_{i+1}\cdots f_k\cdot y - \frac{v^{-1}k_i-vk_i^{-1}}{v-v^{-1}}f_{i+1}\cdots f_k\ell f_1\cdots f_{i-1} \cdot y \\
&= \ell e_i\cdot(0) - \frac{v^{-1}k_i-vk_i^{-1}}{v-v^{-1}}f_{i+1}\cdots f_k\cdot (0) \\
&= 0.
\end{align*}
\end{proof}

The following lemma is the key result of this section, which allow us to construct universal Verma-type modules for $MU(n)$ in the next section.

\begin{lemma} \label{lem:maincalc}
Let $M$ be any $MU(n)$-module and let $x\in M$ be a mirabolic highest weight vector with weight $\mu$ and depth $r$. Furthermore, suppose $x$ has maximal depth amongst all vectors with weight $\mu$ in $M$. Then,
\[e_{r}e_{r - 1}\cdots e_2e_1 \ell f_1 f_2\cdots f_{r-1} f_{r} \cdot x\]
is a scalar multiple of $x$, where the scalar is determined by $\mu$ and $r$ and independent of $M$.
\end{lemma}

\begin{proof}
When $r=0$, we have $\ell\cdot x=x$, which proves the claim. Henceforth, we suppose that $r\geq 1$ and so $\ell\cdot x=0$. Define $x'=e_{r} e_{r - 1}\cdots e_2 e_1 \ell f_1 f_2\cdots f_{r-1} f_{r}\cdot x$. We will compute $\ell f_1 f_2\cdots f_{r-1} f_{r} \cdot x'$ and show that it is a scalar multiple of $\ell f_1 f_2\cdots f_{r-1} f_{r} \cdot x$. To that end, we have:
\[ \ell f_1 f_2\cdots f_{r-1} f_{r} \cdot x' = \ell f_1\cdots f_{r-1} f_{r} e_{r} e_{r - 1}\cdots e_1 \ell f_1\cdots f_{r-1} f_{r} \cdot x.\]
Between the 2 occurrences of the element $\ell$ in the above expression, we move each of the $f_j$'s to the right of each of the $e_i$'s by using the relation~\eqref{eq:usln.3}. (In this process, we accumulate some $k_i$'s and $k_i^{-1}$'s, which we can move to rightmost end and absorb into $x$ up to a multiplication by scalars.) Thus, we see that $\ell f_1 f_2\cdots f_{r-1} f_{r} \cdot x'$ is equal to a linear combination of terms of the form:
\[\ell e_{i_k}\cdots e_{i_2} e_{i_1} f_{i_1} f_{i_2}\cdots f_{i_k}  \ell f_1 f_2\cdots f_{r-1} f_{r} \cdot x,\]
for some $i_j$ such that $1\leq i_1< i_2<\cdots <i_k\leq r$. If $k=0$, we are done as we just get $\ell^2 f_1 f_2\cdots f_{r-1} f_{r} \cdot x=\ell f_1 f_2\cdots f_{r-1} f_{r} \cdot x$.

Next, suppose $k\neq 0$ and let $i_k>1$. Then, the latter half of the above term looks like:
\[\cdots f_{i_k} \ell  f_1 f_2\cdots f_{r-1} f_{r} \cdot x.\]
We can move the $f_{i_k}$ to the right till we get:
\[\cdots \ell  f_1 f_2\cdots f_{i_k}  f_{i_{k}-1}  f_{i_k} \cdots f_{r} \cdot x.\]
We use the identity $(v+v^{-1})f_{i_k}  f_{i_{k}-1}  f_{i_k} =f_{i_k}^2  f_{i_k - 1} + f_{i_k -1} f_{i_k}^2$ to get a sum involving two terms. In the first sum, we can move the $f_{i_k}^2$ to the left, till we have moved it to the left of $\ell$ to get:
\[\cdots f_{i_k}^2  \ell  f_1 f_2\cdots f_{i_k-2}  f_{i_{k}-1}  f_{i_k+1} \cdots f_{r} \cdot x.\]
Then, we can move the $\ell  f_1 f_2\cdots f_{i_{k}-1}$ to the right past all the other $f_j$'s and act on $x$ to get zero as $x$ has depth $r$. So, the only term that survives is the latter term, which is of the form:
\[\ \cdots \ell  f_1 f_2\cdots f_{i_k-1}  f_{i_{k}}^2  f_{i_k+1} \cdots f_{r} \cdot x.\]
Next, we repeat the same procedure by bringing $f_{i_{k-1}}$, which is the element to the left of $\ell$, to its right, exactly as we did with $f_{i_k}$. We similarly use the Serre relation as above, and the only term that survives is the one of the form:
\[\cdots \ell  f_1 f_2\cdots f_{i_{k-1}-1}  f_{i_{k-1}}^2  f_{i_{k-1}+1}\cdots f_{i_k-1}  f_{i_{k}}^2  f_{i_k+1} \cdots f_{r} \cdot x.\]
Thus, we can bring each of the $f_{i_j}$'s to the right of $\ell$ and the only term that survives is the one where the $f_{i_j}$ to the right of $\ell$ gets replaced by $f_{i_j}^2$ for all $j$. The only case where this is not immediately clear is when $i_1=1$, because then $f_{i_1}$ doesn't commute with $\ell$. In that case, we have an expression of the form:
\[\cdots f_1 \ell  f_1 \cdots.\]
We then use the identity $(v+v^{-1})f_1 \ell  f_1 = v^{-1}\ell f_1^2 + vf_1^2 \ell$. In the latter term, we can move the $\ell$ past all the $f_j$'s on its right to have it act on $x$, which gives zero. Hence, the only term that can survive is the one involving $\ell f_1^2$. Thus, we get an expression of the form:
\[\ell e_{i_k}\cdots e_{i_2} e_{i_1}  \ell  f_1^{\eps_1}  f_2^{\eps_2} \cdots f_r^{\eps_r} \cdot x,\]
where:
\[\eps_t=\begin{cases}2 & \text{if}~t=i_j~\text{for some}~j,\\1 &\text{otherwise}.\end{cases}\]
By Lemma~\ref{lem:reduce}, we get that the above expression is equal to:
\[\ell e_{i_k}\cdots e_{i_2} e_{i_1}  f_1^{\eps_1}  f_2^{\eps_2} \cdots f_r^{\eps_r} \cdot x.\]
Now, we move the $e_{i_j}'s$ to the right one by one, starting with $e_{i_1}$. When $e_{i_j}$ comes to $f_{i_j}^2$, we get a linear combination of terms where it commutes past it and terms where we get an expression involving $k_{i_j}^{\pm 1}$. The former term vanishes as $e_{i_j}$ will eventually move to rightmost end, where it acts on $x$ by zero. In the latter term, the $f_{i_j}^2$ gets replaced by $f_{i_j}$, and the $k_{i_j}^{\pm1}$ terms can be moved to the rightmost end and be absorbed by $x$, up to a scalar factor. Hence, up to a scalar factor depending only upon the weight $\mu$ and $r$, we end up with the term:
\[\ell f_1 f_2\cdots f_r\cdot x.\]
This shows that $\ell f_1 f_2\cdots f_{r-1} f_{r} \cdot x' = c \ell f_1 f_2\cdots f_{r-1} f_{r} \cdot x$ for some scalar $c\in K$. Let $\ol{x}=x-cx'$. We have shown that $\ell f_1 f_2\cdots f_{r-1} f_{r} \cdot \ol{x}=0$. We claim that, in fact, $\ell f_1 f_2\cdots f_{t-1} f_{t} \cdot \ol{x} =0$ for all $0\leq t\leq r$. This will show that $\ol{x}$ has depth at least $r+1$, which is possible only if $\ol{x}=0$ since $x$ was assumed to have maximal depth amongst all vectors with weight $\mu$. This would complete the proof of the lemma.

In order to show that $\ell f_1 f_2\cdots f_{t-1} f_{t} \cdot \ol{x} =0$ for any $0<t<r$ (we handle $t=0$ at the end), we only need to show that $\ell f_1 f_2\cdots f_{t-1} f_{t} \cdot x' =0$ as we know that $x$ has depth $r$. The element $\ell f_1 f_2\cdots f_{t-1} f_{t} \cdot x'$ is equal to:
\[\ell f_1\cdots f_{t-1} f_{t} e_{r} e_{r - 1}\cdots e_1 \ell f_1\cdots f_{r-1} f_{r} \cdot x.\]
From the middle part of the above expression, we can move $e_{r} e_{r-1}\cdots e_{t+1}$ to the leftmost end, to get:
\[e_{r} e_{r-1}\cdots e_{t+1}  \ell f_1\cdots f_{t-1} f_{t} e_{t} e_{t - 1}\cdots e_1 \ell f_1\cdots f_{r-1} f_{r} \cdot x.\]
Now, we can repeat the proof of the first half of this lemma word-by-word to show that the above expression is a scalar multiple of the term:
\[e_{r} e_{r-1}\cdots e_{t+1} \ell f_1\cdots f_{r-1} f_{r} \cdot x.\]
We bring the $e_{t+1}$ term to the right of $\ell$ where it commutes past everything (since $t\geq 1$) till it gets to $f_{t+1}$. Then, this expression is a linear combination of two terms. In the first term, the $e_{t+1}$ commutes past $f_{t+1}$ and moves to the rightmost end, where it acts on $x$ by zero. In the other term, the $e_{t+1} f_{t+1}$ gets replaced by an expression involving $k_{t+1}^{\pm1}$. Then, we have that $\ell f_1  f_2\cdots f_t$ commutes past this term and all the following $f_j$'s on its right to act on $x$, which equals zero as $x$ has depth $r>t$. Thus, we get that the above expression evaluates to zero.

Finally, we deal with the case $t=0$. In this case, we need to show that
\[\ell\cdot x' = \ell e_{r} e_{r- 1}\cdots e_1 \ell f_1\cdots f_{r-1} f_{r} \cdot x = \ell e_{r} e_{r - 1}\cdots e_1 f_1\cdots f_{r-1} f_{r} \cdot x\]
is zero. (Here, we have used Lemma~\ref{lem:reduce} for the second equality.) To show this, we use a similar strategy as the rest of the proof where we move each $e_j$ to the right, where it can either commute past $f_j$ or erase it up to a scalar multiple. If it moves past, $e_j$ can be moved to the rightmost end, where it acts on $y$ by zero. If for all $j$, we have that $e_j$ erases the $f_j$, we are eventually left with a scalar multiple of $\ell\cdot x=0$. This completes the proof.
\end{proof}

The proof of the above lemma implies the following corollaries:

\begin{corollary} \label{cor:depth-increase}
Let $N$ be an arbitrary $MU(n)$-module and let $y\in N$ be a mirabolic highest weight vector with depth $r<n$. Consider the vector:
\[y'=e_re_{r-1}\cdots e_2e_1\ell f_1f_2\cdots f_{f-1}f_r\cdot y.\]
Then, there exists a scalar $c\in K$ such that the (possibly zero) vector $y-cy'$ has depth $>r$.
\end{corollary}

\begin{corollary} \label{cor:fmove}
Let $N$ be an arbitrary $MU(n)$-module and let $y\in N$ be a mirabolic highest weight vector with depth $r$. Consider the vector:
\[y':= f_{i_1} f_{i_2}\cdots f_{i_k}  \ell f_1 f_2\cdots f_{s-1} f_{s} \cdot y,\]
for some $i_j$ such that $1\leq i_1< i_2<\cdots <i_k\leq r$ and some $s\geq r$. Then, we have the equality:
\[y'=  \frac{c}{(v+v^{-1})^k}\ell  f_1^{\eps_1}  f_2^{\eps_2} \cdots f_s^{\eps_s} \cdot y,\]
where:
\[\eps_t=\begin{cases}2 & \text{if}~t=i_j~\text{for some}~j,\\1 &\text{otherwise},\end{cases}\]
and $c=v^{-1}$ if $i_1=1$ and $c=1$ otherwise.
\end{corollary}

We end this section with another corollary that computes the scalar alluded to in Lemma~\ref{lem:maincalc}.
\begin{corollary} \label{cor:char-chi}
Fix a dominant integral weight $\lambda$ and an integer $0\leq r\leq n$ and let $\mu=\lambda+\omega_r$. Let $M$ be an $MU(n)$-module and let $x\in M$ be a mirabolic highest weight vector with weight $\mu$ and depth $r$. Furthermore, suppose $x$ has maximal depth amongst all vectors with weight $\mu$ in $M$. Then,
\[e_{r}e_{r - 1}\cdots e_2e_1 \ell f_1 f_2\cdots f_{r-1} f_{r} \cdot x=v^{\lambda_{r+1}-\lambda_1}x.\]
\end{corollary}

\begin{proof}
Consider the $MU(n)$-representation $L_{\lambda, r}$ and the mirabolic highest weight vector $x_{\lambda, r}\in L_{\lambda, r}$. By Lemma~\ref{lem:maincalc}, we can suppose that $M=L_{\lambda, r}$ and $x=x_{\lambda, r}$. Then, we see that:
\begin{align*}
e_r\cdots e_2e_1\ell f_1f_2\cdots f_r\cdot x_{\lambda, r} &= e_r\cdots e_2e_1\ell f_1f_2\cdots f_r\cdot (x_{\lambda}\otimes u_{\{1,2,\dots, r\}}) \\
&=e_r\cdots e_2e_1 \cdot((k_1^{-1}k_2^{-1}\cdots k_{r}^{-1}\cdot x_{\lambda})\otimes(f_1f_2\cdots f_{r}\cdot u_{\{1,2,\dots, r\}}))\\
&=v^{\lambda_{r+1}-\lambda_1}e_r\cdots e_2e_1 \cdot( x_{\lambda}\otimes u_{\{2,3,\dots, r+1\}})\\
&=v^{\lambda_{r+1}-\lambda_1} x_{\lambda}\otimes(e_{r}\cdots e_2e_1\cdot u_{\{2,3,\dots, r+1\}})\\
&=v^{\lambda_{r+1}-\lambda_1} x_{\lambda}\otimes u_{\{1,2,\dots, r\}}.
\end{align*}
\end{proof}

\section{Verma module type constructions} \label{sec:Verma}

\subsection{Definition of the modules}

In this section, we state two constructions of infinite dimensional representations of the algebra $MU(n)$ that are inspired by Verma modules over $\uvsln$ and will be used in the next section for classifying finite dimensional representations.

\begin{definition}Fix a dominant integral weight $\lambda$ and an integer $0\leq r\leq n$. Let $\mu=\lambda+\omega_r$. Viewing the space $MU(n)$ as a left module over itself, consider the left submodule $N_{\lambda, r}\sub MU(n)$ generated by the elements:
\[e_i,~k_i -v^{\mu_i-\mu_{i+1}}\qquad 1\leq i\leq n-1;\]
\[\ell f_1f_2\cdots f_{i},\quad0\leq i\leq r-1;\qquad e_{r}e_{r - 1}\cdots e_2e_1 \ell f_1 f_2\cdots f_{r-1} f_{r}-v^{\lambda_{r+1}-\lambda_1}.\]
We define the quotient module $V_{\lambda, r}:=MU(n)/N_{\lambda, r}$.
\end{definition}
There exists a canonical surjection $MU(n)\to V_{\lambda, r}$ of $MU(n)$-modules and we suppose the image of $1\in MU(n)$ under this surjection is the vector $w_{\lambda, r}\in V_{\lambda, r}$. It is clear that $w_{\lambda, r}$ is a mirabolic highest weight vector with weight $\mu$ and depth $r$, and that it generates $V_{\lambda, r}$ as a left $MU(n)$-module.

\begin{remark}
Consider the subalgebra $MU(n)^+\sub MU(n)$ generated by all the $e_i$'s and the degree zero subspace $MU(n)_{\bold{0}}$ with respect to the grading in Remark~\ref{rem:grading}. Then, it is possible to view the representation $V_{\lambda, r}$ as being induced from a one-dimensional representation of $MU(n)^+$, but we won't be needing that perspective here.
\end{remark}

We can construct another universal $MU(n)$-representation more directly from the theory of $\uvsln$-representations: for any weight $\lambda$, there exists a universal highest weight $\uvsln$-representation, or Verma module, $M_{\lambda}$. If $\lambda$ is an integral dominant weight, then $M_{\lambda}$ 
has a unique finite dimensional quotient, which is isomorphic to 
the simple highest weight module $L_{\lambda}$.

\begin{definition}
We define the $MU(n)$-module $M_{\lambda,r}: = M_{\lambda} \otimes W_r$, where the action of the algebra $MU(n)$ is via the comodule map $\rho$.
\end{definition}
When $\lambda$ is a dominant integral weight, the simple module $L_{\lambda, r}$ is a finite dimensional simple quotient of $M_{\lambda,r}$. The module $M_{\lambda,r}$ has a unique (up to scaling) vector with weight $\lambda+\omega_r$ which is given by $x_{\lambda,r}:=x_{\lambda}\otimes u$, where $x_{\lambda}$ is the unique (up to scaling) highest weight vector of the $\uvsln$-module $M_{\lambda}$ with weight $\lambda$ and $u:=u_{\{1,\ldots,r\}}=u_1\wedge u_2\wedge\cdots\wedge u_r \in W_r$. We can repeat the proof of Lemma~\ref{lem:highcyclic} to show that $M_{\lambda,r}$ is generated as an $MU(n)$-module by the vector $x_{\lambda,r}$.

The two representations defined above are related by the following theorem:
\begin{theorem} \label{prop:Verma}
There exists an $MU(n)$-module isomorphism $\Theta: V_{\lambda,r} \xra{\sim} M_{\lambda,r}$ such that $\Theta(w_{\lambda,r})=x_{\lambda,r}$.
\end{theorem}
\begin{proof}
There exists a surjective map of left $MU(n)$-modules $\Theta:MU(n)\to M_{\lambda,r}$, $1\mapsto x_{\lambda,r}$. Since the highest weight vector $x_{\lambda,r}$ has weight $\mu$ and depth $r$, by the definition of the submodule $N_{\lambda, r}$ and by Corollary~\ref{cor:char-chi}, this map descends to the quotient $\Theta:MU(n)/N_{\lambda, r}\to M_{\lambda,r}$. We will show that it is an isomorphism in Section~\ref{sec:proof-Verma} by providing a construction of an inverse map.
\end{proof}
As a corollary of this theorem, we see that the representation $V_{\lambda,r}$ has depth exactly $r$ and has a unique vector (up to scaling) with weight $\lambda+\omega_r$.

Next, we prove some properties about the representation $M_{\lambda, r}$ when $\lambda$ is a dominant integral weight. In particular, we show that it has a unique non-zero finite dimensional quotient.

\begin{lemma} \label{lem:Jantzen}
Suppose $\lambda$ is a dominant integral weight and $V=M_{\lambda,r}/I$ is a finite dimensional quotient of the module $M_{\lambda,r}$, where $I$ is some left submodule of $M_{\lambda,r}$. Then, for all $i$, we have that $(f_i^{m_i+1}\cdot x_{\lambda})\otimes u \in I$, where $m_i=(\lambda,\alpha_i)$.
\end{lemma}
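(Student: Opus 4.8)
The plan is to show that, for each $i$, the vector $z_i:=(f_i^{m_i+1}\cdot x_\lambda)\otimes u$ is a highest weight vector of $M_{\lambda,r}$ whose weight is \emph{not} dominant, and then to use the finite dimensionality of $M$ to force $z_i$ into $I$. The point is that a highest weight vector of non-dominant weight cannot survive in a finite dimensional module.

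First I would invoke the standard structure of Verma modules for $\uvsln$ (as in \cite{J}): since $\lambda$ is dominant and $m_i=2(\lambda,\alpha_i)/(\alpha_i,\alpha_i)$, the vector $f_i^{m_i+1}\cdot x_\lambda\in M_\lambda$ is a singular vector, i.e. $e_j\cdot(f_i^{m_i+1}x_\lambda)=0$ for all $j$, of weight $\lambda-(m_i+1)\alpha_i$; for $j\neq i$ this is just $[e_j,f_i]=0$ (relation \eqref{eq:usln.3}) together with $e_jx_\lambda=0$, while for $j=i$ it is the usual quantum-$\mathfrak{sl}_2$ computation that expresses $e_if_i^{m_i+1}x_\lambda$ as a quantum-integer multiple of $f_i^{m_i}x_\lambda$ whose coefficient vanishes. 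Next, since $u=u_1\wedge\cdots\wedge u_r=u_{\{1,\dots,r\}}$ is the $\uvsln$-highest weight vector of $W_r\cong L_{\omega_r}$, we have $e_j\cdot u=0$ for all $j$; applying $\Delta(e_j)=1\otimes e_j+e_j\otimes k_j$ to $z_i$, both resulting terms $(f_i^{m_i+1}x_\lambda)\otimes(e_j\cdot u)$ and $(e_jf_i^{m_i+1}x_\lambda)\otimes(k_j\cdot u)$ vanish, so $z_i$ is a highest weight vector of $M_{\lambda,r}$ of weight $\nu_i:=\lambda+\omega_r-(m_i+1)\alpha_i$.

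Then I would compute the pairing of $\nu_i$ with the coroot $\alpha_i^\vee$. Using $2(\omega_r,\alpha_i)/(\alpha_i,\alpha_i)=\delta_{i,r}$ and $2(\alpha_i,\alpha_i)/(\alpha_i,\alpha_i)=2$, one finds
\[\frac{2(\nu_i,\alpha_i)}{(\alpha_i,\alpha_i)}=m_i+\delta_{i,r}-2(m_i+1)=\delta_{i,r}-m_i-2\leq-1<0,\]
so $\nu_i$ is not a dominant weight. Finally, viewing $M=M_{\lambda,r}/I$ as a $\uvsln$-module via the embedding $\iota$ of Proposition~\ref{prop:comodule-alg}(2), the image $\ol z_i$ of $z_i$ in $M$ is still a highest weight vector of weight $\nu_i$ (in particular $e_i\cdot\ol z_i=0$ and $k_i$ acts on it by $v^{\delta_{i,r}-m_i-2}$). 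If $\ol z_i$ were nonzero it would generate a nonzero finite dimensional highest weight submodule, and already its $U_v(\mathfrak{sl}_2)_i$-submodule would be a finite dimensional highest weight module of type $\bold 1$, forcing $v^{\delta_{i,r}-m_i-2}$ to be a nonnegative power of $v$ — contradicting the computation above. Hence $\ol z_i=0$, that is, $(f_i^{m_i+1}\cdot x_\lambda)\otimes u\in I$ for all $i$.

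I do not expect a serious obstacle; the things to be careful about are that $u$ genuinely is the $\uvsln$-highest weight vector of $W_r$ (so that the $e_j\otimes k_j$ term of the coproduct contributes nothing when applied to $z_i$), the sign in the coroot pairing of $\nu_i$, and the standard but essential input that a finite dimensional highest weight $\uvsln$-module of type $\bold 1$ has dominant highest weight.
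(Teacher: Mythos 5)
Your proof is correct, but it takes a genuinely different route from the paper. The paper applies the finite-dimensionality statement \cite[Lemma 5.4(b)]{J} to the image of the generator $x_\lambda\otimes u$ in the quotient: for $i\neq r$ this immediately gives the claim since $f_i\cdot u=0$, but for $i=r$ the relevant pairing is $\langle\lambda+\omega_r,\alpha_r^\vee\rangle=m_r+1$, so one only gets $f_r^{m_r+2}\cdot(x_\lambda\otimes u)\in I$; the paper then expands this vector explicitly and extracts $(f_r^{m_r+1}\cdot x_\lambda)\otimes u$ by acting with $\ell f_1\cdots f_{r-1}$ followed by $e_re_{r-1}\cdots e_1$, so the mirabolic generator $\ell$ enters in an essential way. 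You instead observe that $(f_i^{m_i+1}\cdot x_\lambda)\otimes u$ is itself a $\uvsln$-singular vector (both tensor factors are killed by all $e_j$, and the $e_j\otimes k_j$ term of the coproduct contributes nothing) of weight $\lambda+\omega_r-(m_i+1)\alpha_i$, whose pairing with $\alpha_i^\vee$ is $\delta_{i,r}-m_i-2<0$, and you let the standard fact that primitive vectors in finite dimensional type-$\bold 1$ modules have dominant weight (equivalently, the rank-one computation $e_if_i^N\ol z_i=[N][c-N+1]f_i^{N-1}\ol z_i$ with $v$ generic) force its image to vanish in $M$. Your argument is uniform in $i$, never uses $\ell$, and in fact proves the slightly stronger statement that the conclusion holds for any $\uvsln$-submodule $I$ with finite dimensional quotient; the paper's argument, while longer for $i=r$, reuses exactly the manipulations ($\ell f_1\cdots f_{r-1}$ and $e_r\cdots e_1$) that appear elsewhere in the text, so it stays within an already-established toolkit. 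Both are complete proofs; just make sure to keep the remark you already flagged, namely that the $k_i$-eigenvalue of your singular vector is an honest power of $v$ (no sign), so the type-$\bold 1$ rank-one theory applies.
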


\begin{proof}
When $r=0$ or $r=n$, $M_{\lambda,r}\simeq M_{\lambda}$ as a $\uvsln$-module, and so, the lemma is a consequence of \cite[Lemma 5.4(b)]{J}. Henceforth, we suppose that $1\leq r\leq n-1$.

For any $f_i$ such that $i\neq r$, we have that $f_i\cdot u=0$ and so, $f_i^t\cdot (x_{\lambda}\otimes u) = (f_i^t\cdot x_{\lambda})\otimes u$ for all $t$. Then, by \cite[Lemma 5.4(b)]{J}, we get that $(f_i^{m_i+1} \cdot x_{\lambda})\otimes u=f_i^{m_i+1} \cdot (x_{\lambda}\otimes u) \in I$

Next, consider the action of $f_r$ on $x_{\lambda}\otimes u$. Note that the weight of $x_{\lambda}\otimes u$ with respect to $k_r$ is $(\lambda+\omega_r,\alpha_r)=m_r+1$. Hence, again by \cite[Lemma 5.4(b)]{J}, we get that $f_r^{(m_r+1)+1} \cdot (x_{\lambda}\otimes u)=f_r^{m_r+2} \cdot (x_{\lambda}\otimes u) \in I$. We'll use this to show that $(f_r^{m_r+1}\cdot x_{\lambda})\otimes u\in MU(n)\cdot f_r^{m_r+2}\cdot (x_{\lambda}\otimes u)$, which will show that $(f_r^{m_r+1}\cdot x_{\lambda})\otimes u$ is in $I$.

Recall that $f_r$ acts on the tensor product $M_{\lambda}\otimes W_r$ via $\rho(f_r)=k_r^{-1}\otimes f_r + f_r \otimes 1$. Also, it is clear that $f_r\cdot u = u_1\wedge u_2\wedge\cdots \wedge u_{r-1}\wedge u_{r+1} =: u'$ and that $f_r^2\cdot u =0$. Thus, $f_r^{m_r+2}\cdot (x_\lambda\otimes u)$ is equal to:

\begin{align}
\notag f_r^{m_r+2}\cdot (x_\lambda\otimes u)&=(f_r^{m_r+2} \cdot x_{\lambda})\otimes u+\sum_{i=0}^{m_r+1} (f_r^{m_r-i+1}k_r^{-1}f_r^{i} \cdot x_{\lambda})\otimes u'\\
\label{eq:fmr-dot-xu} &=(f_r^{m_r+2} \cdot x_{\lambda})\otimes u + c_r(f_r^{m_r+1} k_r^{-1} \cdot x_{\lambda})\otimes u',    
\end{align}
where $c_r=1+v^2+\cdots +v^{2(m_r+1)}$.

Next, we act on this element by the product $\ell f_1 f_2\cdots f_{r-1}$. For $i<r$, it is clear that $f_i\cdot u=0$. Therefore, 
\[\ell f_1f_2\cdots f_{r-1} \cdot ((f_r^{m_r+2} \cdot x_{\lambda})\otimes u) = \ell\cdot ((f_1 f_2\cdots f_{r-1} f_r^{m_r+2} \cdot x_{\lambda})\otimes u),\]
which is zero because $\ell\cdot u=0$. Hence, the first term of \eqref{eq:fmr-dot-xu} 
doesn't survive when we act by $\ell f_1 f_2\cdots f_{r-1}$. Next, when we act on the second term, each $f_i$ acts via $\rho(f_i)=k_i^{-1}\otimes f_i + f_i \otimes 1$. Here, we can ignore the $f_i\otimes 1$ part, because that will not survive in the end when we act by $\ell$, and so, up to a scalar multiple, we get:
\begin{equation}\label{eq:lffr-dot-xu}\ell f_1 f_2\cdots f_{r-1}f_r^{m_r+2}\cdot (x_\lambda\otimes u)=c(f_r^{m_r+1} \cdot x_{\lambda}) \otimes u'',\end{equation}
where $u''=u_2\wedge u_3\wedge\cdots \wedge u_{r+1}$ and $c\in K$ is determined by the equation $k_1^{-1}k_2^{-1}\cdots k_{r}^{-1}(f_r^{m_r+1} \cdot x_{\lambda})=cf_r^{m_r+1} \cdot x_{\lambda}$. (When $r=1$, we have that $u''=u'$.) Next, we act on \eqref{eq:lffr-dot-xu} 
by $e_r e_{r-1}\cdots e_1$. Each $e_i$ acts on the tensor product via $\rho(e_i)=1\otimes e_i + e_i\otimes k_i$. Next, we note that for all $i$, we have that $e_i\cdot( f_r^{m_r+1} \cdot x_{\lambda}) = 0$. This is clear when $i\neq r$ since $e_i$ commutes with $f_r$ and $x_{\lambda}$ is a highest weight vector. When $i=r$, this follows from the computation in \cite[Lemma 5.6]{J}. So, we get that 
\begin{align*}e_r e_{r-1}\cdots e_1\cdot ((f_r^{m_r+1} \cdot x_{\lambda}) \otimes u'') &= (f_r^{m_r+1} \cdot x_{\lambda}) \otimes (e_r e_{r-1}\cdots e_1\cdot u'')\\&= (f_r^{m_r+1} \cdot x_{\lambda}) \otimes u,\end{align*}
completing the proof.
\end{proof}

\begin{proposition}\label{prop:unique}
Suppose $\lambda$ is a dominant integral weight and $V=M_{\lambda,r}/I$ is a non-zero finite dimensional quotient of the module $M_{\lambda,r}$, where $I$ is some left submodule of $M_{\lambda,r}$. Then, we have the equality $V=L_{\lambda, r}$.
\end{proposition}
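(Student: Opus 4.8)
The plan is to reduce everything to showing that the submodule $I$ must contain the kernel $K$ of the canonical $MU(n)$-surjection $M_{\lambda,r}\twoheadrightarrow L_{\lambda,r}$: once this is established, $M=M_{\lambda,r}/I$ is a nonzero quotient of $M_{\lambda,r}/K=L_{\lambda,r}$, and since $L_{\lambda,r}$ is simple by Proposition~\ref{prop:irred}, we conclude $M\cong L_{\lambda,r}$ (which in particular forces $I=K$). Note that this phrasing is designed to handle an arbitrary $I$, without assuming $M$ is simple.

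To describe $K$, recall that the kernel of $M_\lambda\twoheadrightarrow L_\lambda$ is $N_\lambda=\sum_{i=1}^{n-1}\uvsln\cdot(f_i^{m_i+1}x_\lambda)$, the submodule generated by the singular vectors (the standard presentation of the simple integrable highest weight module, e.g.\ \cite[\S5]{J}). Since $L_{\lambda,r}=L_\lambda\otimes W_r$ and tensoring with the vector space $W_r$ is exact, $K=N_\lambda\otimes W_r$. So the whole proof comes down to proving $N_\lambda\otimes W_r\subseteq I$.

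The key input for this is the preceding Lemma, which gives $(f_i^{m_i+1}x_\lambda)\otimes u\in I$ for all $i$, where $u=u_1\wedge\cdots\wedge u_r$. Write $z_i:=f_i^{m_i+1}x_\lambda$; this is a singular vector of $M_\lambda$, so $\uvsln\cdot z_i\subseteq M_\lambda$ is a highest weight $\uvsln$-module with highest weight vector $z_i$. I would next argue that the $MU(n)$-submodule of $M_{\lambda,r}$ generated by $z_i\otimes u$ is exactly $(\uvsln\cdot z_i)\otimes W_r$: the proof of Lemma~\ref{lem:highcyclic} used only that $L_\lambda$ is spanned by the vectors $Q(f)\cdot x_\lambda$ and that $e_j\cdot x_\lambda=0$ for all $j$, and both properties hold for an arbitrary highest weight $\uvsln$-module in place of $L_\lambda$, so that argument carries over verbatim. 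Since $I$ is an $MU(n)$-submodule containing $z_i\otimes u$, it follows that $(\uvsln\cdot z_i)\otimes W_r\subseteq I$ for every $i$; summing over $i$ gives $N_\lambda\otimes W_r=\big(\sum_i\uvsln\cdot z_i\big)\otimes W_r\subseteq I$, as needed.

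The one step I expect to require care is the claim that the cyclicity argument of Lemma~\ref{lem:highcyclic} persists when $L_\lambda$ is replaced by a highest weight module that is neither simple nor finite dimensional; I would re-read that proof to confirm that dominance of the weight and simplicity of $L_\lambda$ are never used, only the two properties noted above. If one prefers to avoid invoking the precise generators of $N_\lambda$, an alternative is to extract only $I\supseteq N'\otimes W_r$ with $N'=\sum_i\uvsln\cdot z_i$, note that $M_\lambda/N'$ is an integrable highest weight module and hence finite dimensional (finite type, $v$ generic), deduce $M_\lambda/N'\cong L_\lambda$ from the uniqueness of the finite dimensional quotient of $M_\lambda$, and finish as before.
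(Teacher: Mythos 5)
Your proposal is correct and follows essentially the same route as the paper: the paper likewise takes the vectors $(f_i^{m_i+1}\cdot x_\lambda)\otimes u$ supplied by the preceding lemma, propagates them by a Lemma~\ref{lem:highcyclic}-style cyclicity argument to get $I'\otimes W_r\subseteq I$ (with $I'$ the $\uvsln$-submodule generated by the $f_i^{m_i+1}\cdot x_\lambda$, i.e.\ your $N_\lambda$), and then uses $M_\lambda/I'\cong L_\lambda$ together with the simplicity of $L_{\lambda,r}$ and $M\neq 0$ to conclude. Your reformulation via the kernel of $M_{\lambda,r}\twoheadrightarrow L_{\lambda,r}$ is the same argument in different packaging.
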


\begin{proof}
By the previous lemma, we know that the submodule $I$ contains the elements $x_i=(f_i^{m_i+1}\cdot x_{\lambda})\otimes u$ for all $i$. Let $I'$ by the $\uvsln$-submodule of $M_{\lambda}$ generated by $f_i^{m_i+1}\cdot x_{\lambda}$ for all $i$, where $m_i=(\lambda,\alpha_i)$. Then, by Proposition 5.9 and Theorem 5.15 of \cite{J}, we have that $M_{\lambda}/I' = L_{\lambda}$.

We note that $e_j$ acts by zero on $f_i^{m_i+1}\cdot x_{\lambda}$ for all $i$ and $j$. This is clear when $i\neq j$ since $e_j$ commutes with $f_i$. When $i=j$, this follows from the computation in \cite[Lemma 5.6]{J}. Therefore, the vector $f_i^{m_i+1}\cdot x_{\lambda}$ is a highest weight vector. Then, repeating the argument in the proof of Lemma~\ref{lem:highcyclic}, we get that $(f_i^{m_i+1}\cdot x_{\lambda}) \otimes W_r \sub I$ for all $i$. This, in turn, implies that $I' \otimes W_r\sub I$. Therefore, we have the following surjection of $MU(n)$-modules:
\[L_{\lambda,r} = L_{\lambda} \otimes W_r = \frac{M_{\lambda}}{I'} \otimes W_r  =\frac{M_{\lambda} \otimes W_r}{I'\otimes W_r} \twoheadrightarrow \frac{M_{\lambda} \otimes W_r}{I}=\frac{M_{\lambda,r}}{I} =  V.\]
As $L_{\lambda,r}$ is simple and $V\neq 0$, we get that $V=L_{\lambda,r}$.
\end{proof}

As a consequence, we obtain 
an explicit presentation for the simple representations $L_{\lambda, r}$.

\begin{corollary} \label{cor:exppresent}
Fix a dominant integral weight $\lambda$  and $0\leq r\leq n$. Let $\mu=\lambda+\omega_r$ and let $n_i=(\mu, \alpha_i)$. Consider the left ideal $J$ of $MU(n)$ generated by the following elements:
\[e_i,~k_i -v^{\mu_i-\mu_{i+1}}, ~f_i^{n_i+1}\qquad 1\leq i\leq n-1;\]
\[\ell f_1f_2\cdots f_{i},\quad0\leq i\leq r-1;\qquad e_{r}e_{r - 1}\cdots e_2e_1 \ell f_1 f_2\cdots f_{r-1} f_{r}-v^{\lambda_{r+1}-\lambda_1}.\]
Then, we have that $MU(n)/J \simeq L_{\lambda, r}$ as left $MU(n)$-modules.
\end{corollary}

\begin{proof}
This is a consequence of the isomorphism $V_{\lambda, r}\simeq M_{\lambda, r}$ and the proofs of Lemma~\ref{lem:Jantzen} and Proposition \ref{prop:unique}.
\end{proof}

\subsection{Conclusion of proof of Theorem~\ref{prop:Verma}\label{sec:proof-Verma}}
Recall that 
$V_{\lambda,r}$ was constructed as a quotient of 
$MU(n)$ and has a highest weight vector $ w_{\lambda, r}$. The representation $M_{\lambda, r}=M_{\lambda}\otimes W_r$ is generated by the vector $x_{\lambda, r}$ which has weight $\mu:=\lambda+\omega_r$ and depth $r$, and is the unique vector in its weight space (up to scalar multiples). We have already shown that
there exists a surjection $\Theta:V_{\lambda,r} \to M_{\lambda,r}$ that sends $w_{\lambda, r}$ to $x_{\lambda, r}$. In this section, we construct an inverse to this map by mimicking the proof of Lemma~\ref{lem:highcyclic}, hence showing that it is an isomorphism.

\begin{proposition}
The map $\Theta:V_{\lambda,r} \to M_{\lambda,r}$ has an inverse 
when $r=0$ or $r=n$.
\end{proposition}

\begin{proof}
Suppose $r=0$. In that case, $\ell$ acts by $1$ on $M_{\lambda, 0}$. Furthermore, note that the space $V_{\lambda,0}$ is spanned by elements of the form $m\cdot  w_{\lambda, 0}$ where $m\in MU(n)$. Then, by the construction of $V_{\lambda,r}$, we get that $\ell\cdot w_{\lambda, 0}= w_{\lambda,0}$. We claim that $\ell$ acts by $1$ on all of $V_{\lambda, 0}$. To see this, pick a non-commutative monomial $m\in MU(n)$ having minimal degree in the generators, such that $\ell\cdot (m\cdot w_{\lambda, r})\neq m\cdot w_{\lambda, r}$. By minimality, we can assume that $m$ does not consist of $\ell$ or any of the $k_i^{\pm1}$'s. Furthermore, it can't consist of any of the $e_i$'s either, since they can be commuted past the $f_j$'s, up to lower degree terms, to eventually act on $w_{\lambda, 0}$ by $0$. Thus, we have that $m=Q(f)$, a non-commutative monomial in the $f_j$'s. But then, by Lemma~\ref{lem:reduce} we conclude that $\ell\cdot(m\cdot w_{\lambda, 0})= \ell \cdot(m\ell\cdot w_{\lambda, 0}) = \ell m\ell \cdot w_{\lambda, 0} = m\ell\cdot w_{\lambda, 0} =m\cdot w_{\lambda, 0}$. Thus, $\ell$ acts by $1$ on both $M_{\lambda,0}$ and $V_{\lambda, 0}$, and so, they are both pullbacks to $MU(n)$ of $\uvsln$-representations via the map $\pi_1:MU(n)\to \uvsln$. As $M_{\lambda, 0}=\pi_1^*(M_{\lambda})$, by the universal property of Verma modules, there exists a map $M_{\lambda, 0}\to V_{\lambda, 0}$ that maps $x_{\lambda, 0}$ to $w_{\lambda, 0}$. Since $M_{\lambda, 0}$ is generated as an $MU(n)$-module by $w_{\lambda, 0}$, this map acts as an inverse to $\Theta$, which proves the required claim.

Next, let $r=n$, in which case $\ell$ acts by $0$ on $M_{\lambda, n}$. 
We claim that $\ell$ acts by $0$ on $V_{\lambda, n}$. To see this, pick a non-commutative monomial $m\in MU(n)$ having minimal degree in the generators, such that $\ell\cdot (m\cdot w_{\lambda, r})\neq 0$. By the same argument as above, we can assume that $m=Q(f)$, a non-commutative monomial in the $f_j$'s. Then, by Corollary~\ref{cor:lastcor}, we can express $\ell Q(f)$ as a linear combination of terms of the form $Q_s'(f) \ell f_1f_2\cdots f_s$ for varying $s$ and some non-commutative polynomials $Q_s'(f)$. But, by construction, the element $\ell f_1f_2\cdots f_s$, for all $0\leq s\leq n-1$, acts on $w_{\lambda, r}$ by zero, proving the required claim. Hence, both $M_{\lambda, n}$ and $V_{\lambda, n}$ are pullbacks of representations via the map $\pi_0: MU(n)\to \uvsln$, and so, we can repeat the argument above.
\end{proof}

Henceforth, we assume that $1\leq r\leq n-1$.  
\begin{remark}
We state a slight reformulation of the grading from Remark~\ref{rem:grading}.  The algebra $K[k_1^{\pm1}, k_2^{\pm1},\cdots, k_{n-1}^{\pm1}]$ acts on $MU(n)$, where each $k_i$ acts by conjugation, i.e. $k_i\cdot m=k_imk_i^{-1}$ for all $m\in MU(n)$. By the definition of $MU(n)$ in terms of generators and relations, this action is diagonalizable, hence we can apply the same theory of weights as in Section~\ref{sec:weightspaces}. Then we have a weight space decomposition $MU(n)=\oplus_{\mu\in \ZZ^n/\ZZ}MU(n)_\mu$, where $m\in MU(n)_\mu$ if $k_imk_i^{-1} = v^{\mu_{i}-\mu_{i+1}}m$ for all $1\leq i\leq n-1$. It is clear that this respects the algebra structure, i.e. $MU(n)_\mu\cdot MU(n)_\eta\subset MU(n)_{\mu+\eta}$, and hence defines a grading. Further, the weights of the $k_i$'s and $\ell$ are $(0,\ldots,0)$, and the weight of $e_i$ (resp. $f_i$) is the simple root $\alpha_i$ (resp. $-\alpha_i$), hence $MU(n)$ is actually graded by the root lattice. The $\ZZ^{n-1}$-grading in Remark~\ref{rem:grading} is then related to the weight grading via the isomorphism of abelian groups between the root lattice and $\ZZ^{n-1}$ given by $\alpha_i\mapsto (0,\ldots,0,\stackrel{i}{1},0,\ldots,0)$. An element of the root lattice has a unique representative $\mu$ such that $\sum\mu_i=0$, hence another way to write the isomorphism between the root lattice and the $\ZZ^{n-1}$-grading is via the map, $(\mu_1,\ldots,\mu_{n-1},-\sum_{i=1}^{n-1}\mu_i)\mapsto(\mu_1,\mu_1+\mu_2,\ldots, \mu_1+\cdots+\mu_{n-1})$.
\end{remark}

Note that $M_{\lambda,r}$ is spanned by elements of the form $(m\cdot x_{\lambda})\otimes u_I$ where $m\in MU(n)$ and $I\in [n]_r$. For simplicity of notation in the computations that follow, we will denote $(m\cdot x_{\lambda})\otimes u_I$ 
by the pair $(m,I)\in MU(n)\times [n]_r$. In particular, we have the highest weight vector $x_{\lambda,r}=(1\cdot x_{\lambda})\otimes u_{\{1,\ldots,r\}}=(1,\{1,\ldots,r\})$. Note that $(k_i, I) = v^{\lambda_i-\lambda_{i+1}}(1,I)$ for all $i$.

For any $m'\in MU(n)$, we will denote by $m'\cdot (m,I)$ the element $m'\cdot ((m\cdot x_{\lambda})\otimes u_I)$.  Choose a basis $\mathcal B$ of the negative part of the quantum group $\uvsln$, consisting of non-commutative polynomials in the $f_i$'s. Then, a basis for $M_{\lambda,r}$ is given by pairs $(Q(f), I)\in\bb\times  [n]_r$.

In the rest of the section, we will use the following equality extensively which follows from the fact that $e_j\cdot x_{\lambda}=0$: For any $e_j, Q(f)$ and $I$, we have that:
\[e_j\cdot (Q(f)\cdot x_{\lambda}) = [e_j, Q(f)]\cdot x_{\lambda}.\]

We construct an inverse map $\Phi:M_{\lambda,r}\to V_{\lambda,r}$ layer-by-layer. We start by first defining $\Phi$ on all elements of the form $(Q(f),I)$ where $I\sub \{1,2,\dots,r+1\}$. Define $I_i:=\{1,2,\dots, r+1\}\sm\{i\}$. The construction will be by induction on the degree of $Q(f)$. First, suppose the degree of $Q$ is zero. Define 
\begin{equation}\label{eq:Phi1}\Phi(1,I_1) := v^{\lambda_1-\lambda_{r+1}}\ell f_1f_2\cdots f_r \cdot  w_{\lambda,r};\qquad
\Phi(1,I_i):=e_{i-1}e_{i-2}\cdots e_1\cdot \Phi(1,I_1), \text{ for }2\leq i\leq r+1.\end{equation}

We note that, by definition, 
\begin{align}\notag\Phi(x_{\lambda,r})=\Phi(1,\{1,\ldots,r\})& =\Phi(1,I_{r+1})=v^{\lambda_1-\lambda_{r+1}}e_re_{r-1}\cdots e_1\ell f_1\cdots f_{r-1}f_r\cdot w_{\lambda, r}\\
\label{eq:topvect}\text{ (by the definition of $V_{\lambda,r}$)} & = w_{\lambda,r}.
\end{align}

Now, suppose the map $\Phi$ has been constructed for a given $Q(f)$ having weight $\mu-\lambda$ and all polynomials of lower degree. (This choice of weight has been made so that the element $Q(f)\cdot x_{\lambda}$ has weight $\mu$.) In particular, we have defined $\Phi(Q(f),I_{r+1})$ and $\Phi(Q(f),I_1)$. Define
\begin{align}\label{eq:Phi-fj}\Phi(f_jQ(f),I_1)&:=f_j\cdot \Phi(Q(f),I_1),~\text{ for all }j\neq r+1; \\
\label{eq:Phi-fr}\Phi(f_{r+1}Q(f),I_{1})&:=v^{\mu_1-\mu_{r+1}+1}\ell f_1f_2\cdots f_{r+1} \cdot \Phi(Q(f),I_{r+1}).\end{align}
Thus, we have constructed $\Phi(\widetilde Q(f),I_1)$ whenever $\deg(\widetilde{Q}(f))\leq \deg(Q(f))+1$. Next, we define $\Phi(\widetilde Q(f),I_i)$ for $1\leq i\leq r+1$ by induction on $i$. The base case has been done. For $i\geq 1$, we define:
\begin{equation}\label{eq:Phi-I+1}\Phi(\widetilde Q(f),I_{i+1})=e_i\cdot \Phi(\widetilde Q(f), I_i)-v^{-1}\Phi([e_i,\widetilde Q(f)], I_{i}).\end{equation}
Note that $[e_i,\widetilde{Q}(f)]$ is a (possibly zero) non-commutative polynomial in the $f_j$'s and the $k_j^{\pm1}$'s, which can then be simplified to a polyomial in the $f_i$'s of degree less or equal to $\deg (\widetilde{Q}(f))$. This completes the definition of the map $\Phi$ whenever $I\sub \{1,2,\dots, r+1\}$.

Finally, we define the map $\Phi$ for general $I\sub [n]_r$ by induction on the lexicographic order: Consider $I=\{i_1,i_2,\dots, i_r\} \sub \{1,2,\dots, n\}$ and suppose $i_r>r+1$ (that is, $I\neq I_j$ for any $j$). Pick the largest index $j$ such that $i_j\in I$ but $i_j-1\not\in I$. Define $I'=(I\cup\{i_j-1\})\sm\{i_j\}$. It is clear that $I'<I$ in the lexicographic order. Then, we define for any $Q(f)$ with weight $\mu-\lambda$: 
\begin{equation}\label{eq:Phi-I'}\Phi(Q(f),I):=v^{\mu_{i_j-1}-\mu_{i_j}}f_{i_j-1}\cdot \Phi(Q(f),I') - v^{\mu_{i_j-1}-\mu_{{i_j}}}\Phi(f_{i_j-1}Q(f), I').\end{equation}
 This completes the definition of $\Phi$ on the specified basis of $M_{\lambda,r}$. Before stating the main proposition of this section, we prove a technical lemma that we will be needing.

\begin{lemma}\label{lem:Phi-ei}
For all $1\leq i\leq r$ and $Q(f)\in \bb$ we have the equality:
\[e_i\cdot \Phi(Q(f), (I_{i}\cup\{r+2\})\sm \{r+1\}) =  \Phi(e_i\cdot(Q(f), (I_{i}\cup\{r+2\})\sm \{r+1\})).\]
\end{lemma}

 \begin{proof}
We use the fact that $[e_{i},f_{r+1}]=0$ to see that:
\begin{align*}
&~e_{i}\cdot \Phi(Q(f), (I_{i}\cup\{r+2\})\sm \{r+1\})\\
\text{(by \eqref{eq:Phi-I'})}=&~v^{\mu_{r+1}-\mu_{r+2}}e_{i}\cdot (f_{r+1}\cdot\Phi(Q(f), I_{i})-\Phi(f_{r+1}Q(f),I_{i}))\\
=&~v^{\mu_{r+1}-\mu_{r+2}}f_{r+1}e_{i}\cdot  \Phi(Q(f), I_{i}) - v^{\mu_{r+1}-\mu_{r+2}}e_{i}\cdot \Phi(f_{r+1}Q(f),I_{i})\\
\text{(by \eqref{eq:Phi-I+1})}=&~v^{\mu_{r+1}-\mu_{r+2}}f_{r+1}\cdot (v^{-1}\Phi([e_{i},Q(f)], I_{i}) +\Phi(Q(f),I_{i+1}))\\&-v^{\mu_{r+1}-\mu_{r+2}}(v^{-1}\Phi(f_{r+1}[e_{i},Q(f)],I_{i})+\Phi(f_{r+1}Q(f),I_{i+1}))\\
\text{(by \eqref{eq:Phi-I'})}=&~v^{-1}\Phi([e_{i},Q(f)], (I_{i}\cup\{r+2\})\sm \{r+1\})+ \Phi(Q(f), (I_{i+1}\cup\{r+2\})\sm \{r+1\})\\
\text{(by \eqref{eq:Phi-I+1})}=&~\Phi(e_{i}\cdot(Q(f), (I_{i}\cup\{r+2\})\sm \{r+1\})),
\end{align*}
which gives us the desired equality.
 \end{proof}
 
\begin{proposition}
The $K$-linear map $\Phi:M_{\lambda,r}\to V_{\lambda,r}$ is a map of $MU(n)$-modules.
\end{proposition}

Since the modules $M_{\lambda, r}$ and $V_{\lambda,r}$ are generated as $MU(n)$-modules by the vectors $x_{\lambda, r}$ and $w_{\lambda, r}$ respectively and we have the equalities $\Theta(w_{\lambda, r})=x_{\lambda, r}$ and $\Phi(x_{\lambda, r})=\Phi(1,I_{r+1})=w_{\lambda, r}$ (by~\eqref{eq:topvect}), this proposition shows that $\Theta$ is an isomorphism with inverse $\Phi$.

\begin{proof}
We need to show that $\Phi(z\cdot(Q(f), I)) = z\cdot \Phi(Q(f), I)$ for any generator $z$ of $MU(n)$. When $z=k_j^{\pm 1}$, the claim is clear. The proof of the proposition has been broken down into two lemmas: we prove the claim when $I=I_i$ for some $i$ in Lemma~\ref{lem:I_r+1} and for general $I$ in Lemma~\ref{lem:general}.
\end{proof}

Before we prove the lemmas, we state a modified definition of the `degree' of a non-commutative monomial $Q(f)$, denoted by $\ol{\deg}(Q(f))$ that we use in the rest of this section: We stipulate that $\ol\deg(f_i)=1$ for all $i\neq r+1$ (this condition is vacuous when $r=n-1$) and $\ol\deg(f_{r+1})=2r+1$. Then, for any non-commutative monomial $Q(f)$, we can define $\ol{\deg}(Q(f))$ inductively, using the fact that the relation~\eqref{eq:usln.6} is homogeneous with respect to this definition.

\begin{lemma} \label{lem:I_r+1}
For any $1\leq i\leq r+1$ and any $Q(f)\in \mathcal B$, we have the equality
\[\Phi(z\cdot(Q(f), I_i)) = z\cdot \Phi(Q(f), I_i),\] where $z=\ell,e_j,f_j$ 
for $1\leq j\leq n-1$.
\end{lemma}

\begin{proof}
We will prove the lemma for pairs $(Q(f), I_i)$ via induction on $\ol\deg(Q(f))+i$. Throughout the rest of the proof, we will assume that $Q(f)$ has weight $\mu-\lambda$ with respect to the $k_j$'s. We start with the base case of the induction when $i=1$ and $Q(f)=1$. When $z=f_j$ and $j\neq r+1$, we have that:
\begin{equation}\label{eq:fj-11}\Phi(f_j\cdot (1, I_1)) = \Phi(f_j, I_1) = f_j\cdot \Phi(1,I_{1}),\end{equation}
where the first equality follows from the definition of $\rho$ and second equality is by \eqref{eq:Phi-fj}. 
When $j=r+1$, 
\begin{align}
\notag\Phi(f_{r+1}\cdot (1, I_1)) &= \Phi(f_{r+1}, I_1) + v^{\lambda_{r+2}-\lambda_{r+1}}\Phi(1,(I_1\cup\{r+2\})\sm\{r+1\})\\
\label{eq:fr-11}\text{(by \eqref{eq:Phi-I'})}&= f_{r+1}\cdot \Phi(1,I_{1}).
\end{align}
Next, we suppose $z=e_j$ for some $j$. If $j=1$, we have:
\[\Phi(e_1\cdot (1,I_1))  = \Phi(1,I_2)\\
 = e_1\cdot \Phi(1,I_1),\]
where the second equality is by \eqref{eq:Phi1}. Now, we suppose that $j\neq 1$. Note that $e_j\cdot (1,I_1)=0$. On the other hand, if $j>r$ we have
\[e_j\cdot\Phi(1,I_1)= v^{\lambda_1-\lambda_{r+1}}e_j\ell f_1f_2\cdots f_r\cdot w_{\lambda,r}=v^{\lambda_1-\lambda_{r+1}}\ell f_1f_2\cdots f_re_j\cdot w_{\lambda,r}=v^{\lambda_1-\lambda_{r+1}}\ell f_1f_2\cdots f_r\cdot( e_j\cdot w_{\lambda,r})=0;\]
and, when $2\leq j\leq r$, we also have that
\begin{align*}
e_j\cdot\Phi(1,I_1)&= v^{\lambda_1-\lambda_{r+1}}e_j\ell f_1f_2\cdots f_r\cdot w_{\lambda,r}=v^{\lambda_1-\lambda_{r+1}}\ell e_j f_1f_2\cdots f_r\cdot w_{\lambda,r}\\
&=v^{\lambda_1-\lambda_{r+1}}\ell f_1f_2\cdots f_re_j\cdot w_{\lambda,r} + v^{\lambda_1-\lambda_{r+1}}\ell f_1f_2\cdots f_{j-1}\frac{k_j-k_j^{-1}}{v-v^{-1}}f_{j+1\cdots}f_r\cdot w_{\lambda,r}\\
&=0,
\end{align*}
where the last equality follows since $e_j$ and $\ell f_1f_2\cdots f_{j-1}$ act on $w_{\lambda, r}$ by zero. Finally, we note that $\ell$ acts on $\Phi(1,I_1) = v^{\lambda_1-\lambda_{r+1}}\ell f_1f_2\cdots f_r \cdot w_{\lambda, r}$ by $1$, proving equivariance with respect to $\ell$ and completing the proof of the base case of the induction.

Next we proceed with the induction step. First, we suppose that $i=1$. The fact that $f_j\cdot\Phi(Q(f),I_1)=\Phi(f_j\cdot(Q(f),I_1))$ follows from exactly the same calculation as when $Q(f)=1$ in \eqref{eq:fj-11} and \eqref{eq:fr-11}. Next, we suppose $z=e_j$ for some $j$. If $j=1$, we have:
\begin{align*}\Phi(e_1\cdot (Q(f),I_1)) & = \Phi(Q(f),I_2) + v^{-1}\Phi(e_1Q(f),I_1) \\
&= \Phi(Q(f),I_2) + v^{-1}\Phi([e_1,Q(f)],I_1) \\
\text{(by \eqref{eq:Phi-I+1})} &= e_1\cdot \Phi(Q(f),I_1).\end{align*}
Now, we suppose that $j\neq 1$. If $\ol\deg(Q(f))\geq 1$, then $Q(f)$ can be written as a linear combination of elements of the form $f_{j'}\widetilde Q(f)$ for varying $j'$ and $\ol\deg(\widetilde Q(f))< \ol\deg(Q(f))$. In this case, if $j'\neq r+1$, we have
\begin{align*}
\Phi(e_j\cdot(f_{j'}\widetilde Q(f), I_1))&=\Phi((e_j\otimes k_j)\cdot(f_{j'}\widetilde Q(f),I_1)+(1\otimes e_j)\cdot(f_{j'}\widetilde Q(f),I_1))\\
\text{(because $e_j\cdot u_{I_1}=0$)} &=v^{\delta_{j,r+1}}\Phi([e_j,f_{j'}\widetilde Q(f)], I_1)\\
&=\delta_{j,j'}\Phi\Big(\frac{k_{j}-k_{j}^{-1}}{v-v^{-1}}\widetilde Q(f),I_1\Big) + v^{\delta_{j,r+1}}\Phi(f_{j'}[e_j,\widetilde{Q}(f)],I_1)\\
\text{(by \eqref{eq:Phi-fj})}&=\delta_{j,j'}\frac{k_{j}-k_{j}^{-1}}{v-v^{-1}}\cdot\Phi(\widetilde Q(f),I_1) + v^{\delta_{j,r+1}}f_{j
'}\cdot \Phi([e_j,\widetilde{Q}(f)],I_1)\\
\text{(by induction and because $e_j\cdot u_{I_1}=0$)}&=\delta_{j,j'}\frac{k_{j}-k_{j}^{-1}}{v-v^{-1}}\cdot\Phi(\widetilde Q(f),I_1) + f_{j'}e_j\cdot\Phi(\widetilde{Q}(f),I_1)\\
&=e_jf_{j'}\cdot \Phi(\widetilde{Q}(f),I_1)\\
\text{(by \eqref{eq:Phi-fj})}&=e_j\cdot \Phi(f_{j'}\widetilde{Q}(f),I_1),
\end{align*}
proving the claim. Finally, suppose $j'=r+1$. In that case:
\begin{align*}
\Phi(e_j\cdot(f_{r+1}\widetilde Q(f), I_1))&=v^{\delta_{j,r+1}}\Phi([e_j,f_{r+1}\widetilde Q(f)], I_1)\\
&=\delta_{j,r+1}v\Phi\Big(\frac{k_j-k_j^{-1}}{v-v^{-1}}\widetilde Q(f),I_1\Big) + v^{\delta_{j,r+1}}\Phi(f_{r+1}[e_j,\widetilde{Q}(f)],I_1)\\
\text{(by \eqref{eq:Phi-fr})}&=\delta_{j,r+1}v\Phi\Big(\frac{k_j-k_j^{-1}}{v-v^{-1}}\widetilde Q(f),I_1\Big) \\
&~+ v^{\delta_{j,r+1}+(\mu_1)-(\mu_{r+1}-\delta_{j,r}+\delta_{j,r+1}+1)+1}\ell f_1f_2\cdots f_rf_{r+1}\cdot \Phi([e_j,\widetilde{Q}(f)],I_{r+1})\\
&=\delta_{j,r+1}v\Phi\Big(\frac{k_j-k_j^{-1}}{v-v^{-1}}\widetilde Q(f),I_1\Big) + v^{\mu_1-\mu_{r+1}}\ell f_1f_2\cdots f_rf_{r+1}e_j\cdot\Phi(\widetilde{Q}(f),I_{r+1}),
\end{align*}
where the last equality follows by observing that $k_j\cdot u_{I_{r+1}}=v^{\delta_{j,r}}u_{I_{r+1}}.$ If $j\neq r+1$, the term on the left is zero, whereas the term on the right becomes:
\begin{align*}
v^{\mu_1-\mu_{r+1}}\ell f_1f_2\cdots f_rf_{r+1}e_j\cdot\Phi(\widetilde{Q}(f),I_{r+1})&=v^{\mu_1-\mu_{r+1}}\ell f_1f_2\cdots f_je_j\cdots f_rf_{r+1}\cdot\Phi(\widetilde{Q}(f),I_{r+1})\\
&=v^{\mu_1-\mu_{r+1}}\ell f_1f_2\cdots \left(e_jf_j -\frac{k_j-k_j^{-1}}{v-v^{-1}}\right)\cdots f_rf_{r+1}\cdot\Phi(\widetilde{Q}(f),I_{r+1})\\
&=v^{\mu_1-\mu_{r+1}}\ell f_1f_2\cdots e_jf_j \cdots f_rf_{r+1}\cdot\Phi(\widetilde{Q}(f),I_{r+1})\\
&\text{(because } \ell f_1f_2\cdots f_{j-1}\cdot \Phi(\widetilde{Q}(f),I_{r+1})=0 \text{ by the induction step)}\\
&=v^{\mu_1-\mu_{r+1}}e_j\ell f_1f_2\cdots f_rf_{r+1}\cdot\Phi(\widetilde{Q}(f),I_{r+1})\\
\text{(by \eqref{eq:Phi-fr})}&=e_j\cdot \Phi(f_{r+1} \widetilde{Q}(f),I_{1}).
\end{align*}
Finally, if $j=r+1$, the sum equals:
\begin{align*}
&~v\Phi\Big(\frac{k_{r+1}-k_{r+1}^{-1}}{v-v^{-1}}\widetilde Q(f),I_1\Big) + v^{\mu_1-\mu_{r+1}}\ell f_1f_2\cdots f_rf_{r+1}e_{r+1}\cdot\Phi(\widetilde{Q}(f),I_{r+1})\\
=&~v\Phi\Big(\frac{k_{r+1}-k_{r+1}^{-1}}{v-v^{-1}}\widetilde Q(f),I_1\Big) + v^{\mu_1-\mu_{r+1}}e_{r+1}\ell f_1f_2\cdots f_rf_{r+1}\cdot\Phi(\widetilde{Q}(f),I_{r+1})\\
&-v^{\mu_1-\mu_{r+1}}\ell f_1f_2\cdots f_r \cdot\Phi\Big(\frac{k_{r+1}-k_{r+1}^{-1}}{v-v^{-1}}\widetilde{Q}(f),I_{r+1}\Big)\\
=&~v\Phi\Big(\frac{k_{r+1}-k_{r+1}^{-1}}{v-v^{-1}}\widetilde Q(f),I_1\Big) + v^{\mu_1-\mu_{r+1}}e_{r+1}\ell f_1f_2\cdots f_rf_{r+1}\cdot\Phi(\widetilde{Q}(f),I_{r+1}) -  v\Phi\Big(\frac{k_{r+1}-k_{r+1}^{-1}}{v-v^{-1}}\widetilde{Q}(f),I_{1}\Big)\\
&(\text{where the last equality follows from the fact that }\\
&\ell f_1f_2\cdots f_r \cdot\Phi\Big(\frac{k_{r+1}-k_{r+1}^{-1}}{v-v^{-1}}\widetilde{Q}(f),I_{r+1}\Big) = \Phi\Big(\ell f_1f_2\cdots f_r \cdot\left(\frac{k_{r+1}-k_{r+1}^{-1}}{v-v^{-1}}\widetilde{Q}(f),I_{r+1}\right)\Big)\\
&\text{by the induction step)}\\
=&~v^{\mu_1-\mu_{r+1}}e_{r+1}\ell f_1f_2\cdots f_rf_{r+1}\cdot\Phi(f_{r+1}\widetilde{Q}(f),I_{r+1})\\
\text{(by \eqref{eq:Phi-fr})}=&~e_{r+1}\cdot \Phi(f_{r+1}\widetilde{Q}(f),I_1).
\end{align*}

Lastly, we suppose $z=\ell$. We need to show that $\ell$ acts on $\Phi(Q(f), I_1)$ by $1$. We write $Q(f)$ as a linear combination of elements of the form $f_j\widetilde{Q}(f)$,with $\ol{\deg}(\widetilde{Q}(f))<\ol{\deg}(Q(f))$. If $j\neq r+1$, we have, by \eqref{eq:Phi-fj},
\[\ell\cdot \Phi(f_j\widetilde Q(f),I_1) =\ell f_j\cdot \Phi(\widetilde Q(f),I_1)= \ell f_j\ell\cdot \Phi(\widetilde Q(f),I_1) = f_j\ell\cdot \Phi(\widetilde Q(f),I_1)=f_j\cdot \Phi(\widetilde Q(f),I_1)=\Phi(f_j\widetilde Q(f),I_1)\]
where the second and fourth equality follow by the induction step.
Finally, by \eqref{eq:Phi-fr},
\begin{align*}
\ell\cdot \Phi(f_{r+1}\widetilde Q(f),I_1) &= \ell\cdot(v^{\mu_1-\mu_{r+1}+1}\ell f_1f_2\cdots f_{r+1} \cdot \Phi(\widetilde Q(f),I_{r+1}))\\
&= v^{\mu_1-\mu_{r+1}+1}\ell f_1f_2\cdots f_{r+1} \cdot \Phi(\widetilde Q(f),I_{r+1})\\
&= \Phi(f_{r+1}\widetilde Q(f),I_1),
\end{align*}completing the proof.

Now, we work with general $i$. Recall that, by \eqref{eq:Phi-I+1}, 
\[\Phi( Q(f),I_{i+1})=e_i\cdot \Phi( Q(f), I_i)-v^{-1}\Phi([e_i,Q(f)], I_{i}).\]

We need to show that $\Phi(z\cdot(Q(f), I_{i+1})) = z\cdot \Phi(Q(f), I_{i+1})$ where $z$ is a generator of $MU(n)$. We accomplish this in $3$ steps, dealing with the $f_j$'s, $\ell$ and $e_j$'s separately.

\underline{\textbf{Step $1$: $z=f_j$}}

Let $z=f_j$ and $j\not\in\{i-1,i,r+1\}$. Then,
\begin{align*}\Phi( f_j\cdot(Q(f),I_{i+1}))=\Phi( f_jQ(f),I_{i+1})&=e_i\cdot \Phi( f_jQ(f), I_i)-v^{-1}\Phi([e_i,f_jQ(f)], I_{i})\\
\text{(because $f_j\cdot u_{I_i}=0$ and $[e_i,f_j]=0$) }&=e_i\cdot \Phi(f_j\cdot(Q(f), I_i))-v^{-1}\Phi(f_j[e_i,Q(f)], I_{i})\\
\text{(by the induction step and because $f_j\cdot u_{I_i}=0$) }&=e_if_j\cdot \Phi(Q(f), I_i)-v^{-1}\Phi(f_j\cdot([e_i,Q(f)], I_{i}))\\
\text{(by the induction step and because $[e_i,f_j]=0$) }&=f_je_i\cdot \Phi(Q(f), I_i)-v^{-1}f_j\cdot\Phi([e_i,Q(f)], I_{i}) \\
&=  f_j\cdot\Phi(Q(f),I_{i+1}).\end{align*}
Next, we first consider the case when $i\geq 2$ and $z=f_{i-1}$. Then,
\begin{align*}
f_{i-1}\cdot\Phi( Q(f),I_{i+1})&=e_if_{i-1}\cdot \Phi( Q(f), I_i)-v^{-1}f_{i-1}\cdot\Phi([e_i,Q(f)], I_{i})\\
\text{(by induction hypothesis) }&=e_i\cdot \Phi(f_{i-1}\cdot( Q(f), I_i))-v^{-1}\Phi(f_{i-1}\cdot([e_i,Q(f)], I_{i}))\\
&=e_i\cdot \Phi(f_{i-1}Q(f),I_i) + v^{\mu_{i}-\mu_{i-1}}e_i\cdot \Phi(Q(f), I_{i-1})\\
&~-v^{-1}\Phi(f_{i-1}[e_i,Q(f)], I_{i})-v^{\mu_i-\mu_{i-1}}\Phi([e_i,Q(f)], I_{i-1})\\
\text{(by induction hypothesis and because $e_i\cdot u_{I_{i-1}}=0$) }&=\Phi(f_{i-1}Q(f), I_{i+1})\\
\text{(because $f_{i-1}\cdot u_{I_{i+1}}=0$) } &=\Phi(f_{i-1}\cdot(Q(f), I_{i+1})).
\end{align*}
Next, when $z=f_i$, we get:
\begin{align*}
f_{i}\cdot\Phi( Q(f),I_{i+1})&=f_ie_i\cdot \Phi( Q(f), I_i)-v^{-1}f_{i}\cdot\Phi([e_i,Q(f)], I_{i})\\
\text{(by induction hypothesis and because $f_i\cdot u_{I_i}=0$) }&=e_if_i\cdot \Phi( Q(f), I_i)-\frac{k_i-k_i^{-1}}{v-v^{-1}}\cdot \Phi( Q(f), I_i)-v^{-1}\Phi(f_i[e_i,Q(f)], I_{i})\\
\text{(by induction hypothesis and because $f_i\cdot u_{I_i}=0$) }&=e_i\cdot \Phi(f_iQ(f), I_i)-\frac{k_i-k_i^{-1}}{v-v^{-1}}\cdot \Phi( Q(f), I_i)\\
&~-v^{-1}\Phi([e_i,f_iQ(f)], I_{i})+v^{-1}\Phi\Big(\frac{k_i-k_i^{-1}}{v-v^{-1}}Q(f), I_{i}\Big)\\
\text{(because $\rho(k_i^{\pm 1})=k_i^{\pm 1}\otimes k_i^{\pm 1}$)}&=e_i\cdot \Phi(f_iQ(f),I_i)-v^{-1}\Phi([e_i,f_iQ(f)],I_i)+v^{-1}k_i^{-1}\cdot \Phi(Q(f),I_i)\\
&=\Phi(f_iQ(f),I_{i+1})+ \Phi(k_i^{-1}Q(f),I_i)\\
\text{(because $f_i\cdot u_{I_{i+1}}=I_i$) }&=\Phi(f_i\cdot (Q(f), I_{i+1})).
\end{align*}
Finally, when $z=f_{r+1}$, we see that (Note that $i\leq r$, so $[e_i,f_{r+1}]=0$):
\begin{align*}
f_{r+1}\cdot\Phi( Q(f),I_{i+1})&=e_if_{r+1}\cdot \Phi( Q(f), I_i)-v^{-1}f_{r+1}\cdot\Phi([e_i,Q(f)], I_{i})\\
\text{(by induction hypothesis)}&=e_i\cdot \Phi(f_{r+1}Q(f), I_i)+v^{\mu_{r+2}-\mu_{r+1}}e_i\cdot \Phi( Q(f), (I_i\cup\{r+2\})\sm\{r+1\})\\
&~-v^{-1}\cdot\Phi([e_i,f_{r+1}Q(f)], I_{i})-v^{\mu_{r+2}-\mu_{r+1}-1+\delta_{i,r}}\Phi([e_i,Q(f)], (I_i\cup\{r+2\})\sm\{r+1\})\\
\text{(by Lemma \ref{lem:Phi-ei})}&=\Phi(f_{r+1}Q(f),I_{i+1}) + v^{\mu_{r+2}-\mu_{r+1}} (1-\delta_{i,r})\Phi(Q(f), (I_{i+1}\cup\{r+2\})\sm\{r+1\}))\\
&=\Phi(f_{r+1}\cdot (Q(f), I_{i+1})).
\end{align*}

\underline{\textbf{Step $2$: $z=\ell$}}

We start the induction step with the equality from \eqref{eq:Phi-I+1}:
\[\Phi( Q(f),I_{i+1})=e_i\cdot \Phi( Q(f), I_i)-v^{-1}\Phi([e_i,Q(f)], I_{i}).\]
We first let $i=1$ in the above equality. Then, we have:
\[
\ell\cdot \Phi( Q(f),I_{2}) = \ell e_1\cdot \Phi( Q(f), I_1)-v^{-1}\ell\cdot \Phi([e_1,Q(f)], I_{1})
\]
We induct on $\ol{\deg}(Q(f))$. When $Q(f)=1$, we have by \eqref{eq:Phi1}: 
\begin{align*}\ell\cdot \Phi( 1,I_{2})&=v^{\lambda_1-\lambda_{r+1}}\ell e_1\ell f_1\cdots f_{r-1}f_r\cdot w_{\lambda, r}\\
&=v^{\lambda_1-\lambda_{r+1}}\ell e_1 f_1\cdots f_{r-1}f_r\cdot w_{\lambda, r}\\
&= v^{\lambda_1-\lambda_{r+1}}\ell \left(f_1e_1+\frac{k_1-k_1^{-1}}{v-v^{-1}}\right)f_2\cdots f_{r-1}f_r\cdot w_{\lambda, r},\end{align*}
which is zero since $\ell$ and $e_1$ commute past all the terms $f_2,\ldots, f_r$ and act on $w_{\lambda, r}$ by zero. Next, if $\ol\deg(Q(f))\geq 1$, we can write $Q(f)$ as a linear combination of elements of the form $f_{j}\widetilde Q(f)$ for varying $j$ and $\ol\deg(\widetilde Q(f))< \ol\deg(Q(f)$. When $j\neq 1, r+1$, we have:
\begin{align*}
\ell\cdot \Phi(f_j\widetilde Q(f),I_{2}) &= \ell e_1\cdot \Phi( f_j\widetilde Q(f), I_1)-v^{-1}\ell\cdot \Phi([e_1,f_j\widetilde Q(f)], I_{1})\\
\text{(by \eqref{eq:Phi-fj}) }&=f_j\ell\cdot( e_1\cdot \Phi( \widetilde Q(f), I_1)-v^{-1} \Phi([e_1,\widetilde Q(f)], I_{1}))\\
&=f_j\ell\cdot(\Phi(\widetilde{Q}(f),I_2))\\
\text{(by induction) }&=0.
\end{align*}
Next, suppose $j=r+1$. Then, 
\begin{align*}
\ell\cdot \Phi(f_{r+1}\widetilde Q(f),I_{2}) &= \ell e_1\cdot \Phi( f_{r+1}\widetilde Q(f), I_1)-v^{-1}\ell\cdot \Phi([e_1,f_{r+1}\widetilde Q(f)], I_{1})\\
\text{(by \eqref{eq:Phi-fr}) }&=v^{\mu_{1}-\mu_{r+1}-1}\ell e_1 \ell f_1f_2\cdots f_{r+1}\cdot \Phi( \widetilde Q(f), I_{r+1})-v^{\mu_{1}-\mu_{r+1}-1}\ell\ell f_1f_2\cdots f_{r+1}\cdot \Phi([e_1,\widetilde Q(f)], I_{r+1})\\
&=v^{\mu_{1}-\mu_{r+1}-1}\ell e_1 f_1f_2\cdots f_{r+1}\cdot \Phi( \widetilde Q(f), I_{r+1})-v^{\mu_{1}-\mu_{r+1}-1}\ell f_1f_2\cdots f_{r+1}\cdot \Phi([e_1,\widetilde Q(f)], I_{r+1})\\
&=v^{\mu_{1}-\mu_{r+1}-1}\ell f_1 e_1f_2\cdots f_{r+1}\cdot \Phi( \widetilde Q(f), I_{r+1})-v^{\mu_{1}-\mu_{r+1}-1}\ell f_1f_2\cdots f_{r+1}\cdot \Phi([e_1,\widetilde Q(f)], I_{r+1})\\
&\text{(Here, we use the fact that }\ell\cdot \Phi(\widetilde{Q}(f),I_{r+1})=0 \text{ in order to skip the commutator term}.)\\
&=v^{\mu_{1}-\mu_{r+1}-1}\ell f_1 f_2\cdots f_{r+1}e_1\cdot \Phi( \widetilde Q(f), I_{r+1})-v^{\mu_{1}-\mu_{r+1}-1}\ell f_1f_2\cdots f_{r+1}\cdot \Phi([e_1,\widetilde Q(f)], I_{r+1})\\
\text{(since $e_1\cdot u_{I_{r+1}}=0$)}&=0.
\end{align*}

Lastly, we consider the case when $j=1$. By Corollary~\ref{cor:lastcor} we can assume that $Q(f)$ is of the form $f_1^2Q'(f)$ for some non-commutative monomial $Q'(f)$ (having degree lesser than that of $Q(f)$) or $Q(f)=f_1f_2\cdots f_s$ for some $1\leq s\leq n-1$. In the former case, we have that:
\begin{align*}
\ell\cdot \Phi(f_{1}^2 Q'(f),I_{2}) &= \ell\cdot(\Phi(f_1\cdot(f_1Q'(f),I_2))-v^{\mu_2-\mu_1-2}\Phi(f_1\widetilde{Q}(f),I_1))\\
\text{(by the induction assumption)}&=\ell f_1\cdot\Phi(f_1Q'(f), I_2) - v^{\mu_2-\mu_1-2}\ell\cdot\Phi(f_1Q'(f),I_1)\\
\text{(by the induction assumption)}&= \ell f_1^2\cdot\Phi(Q'(f), I_2) - v^{\mu_2-\mu_1-4}\ell f_1\cdot \Phi(Q'(f),I_1)\\
&- v^{\mu_2-\mu_1-2}\Phi(f_1Q'(f),I_1)\\
\text{(by the induction assumption and $f_1\cdot u_{I_1}=0$)}&= \ell f_1^2\cdot\Phi(Q'(f), I_2) - v^{\mu_2-\mu_1-4}\ell \cdot  \Phi(f_1Q'(f),I_1)\\
&- v^{\mu_2-\mu_1-2}\Phi(f_1Q'(f),I_1)\\
&=\ell f_1^2\cdot\Phi(Q'(f), I_2) - v^{\mu_2-\mu_1-4}(1+v^2)\ell \cdot  \Phi(f_1Q'(f),I_1)\\
\text{(by Equation~\eqref{eq:mun.5})}&=((1+v^2)f_1\ell f_1 - v^2f_1^2\ell)\cdot\Phi(Q'(f), I_2) \\
&- v^{\mu_2-\mu_1-4}(1+v^2)\ell \cdot  \Phi(f_1Q'(f),I_1)\\
\text{(since $\ell\cdot \Phi(Q'(f), I_2)=0$ by induction)}&=(1+v^2)f_1\ell f_1 \cdot\Phi(Q'(f), I_2) - v^{\mu_2-\mu_1-4}(1+v^2)\ell \cdot  \Phi(f_1Q'(f),I_1)\\
&=(1+v^2)f_1\ell \cdot \Phi(f_1Q'(f), I_2) + (1+v^2)v^{\mu_2-\mu_1-4}f_1\ell\cdot \Phi(Q'(f),I_1) \\
&- v^{\mu_2-\mu_1-4}(1+v^2)\ell \cdot  \Phi(f_1Q'(f),I_1)\\
\text{(since $\ell\cdot \Phi(f_1Q'(f), I_2)=0$)}&=0.
\end{align*}

Next, we deal with the case when $Q(f)=f_1f_2\cdots f_s$ for some $s$. If $ s<r+1$, we note that:
\begin{align}
\notag\ell\cdot\Phi(f_1f_2\cdots f_s,I_{2})&=\ell e_1\cdot \Phi(f_1f_2\cdots f_s, I_1)-v^{-1}\ell\cdot \Phi([e_1,f_1f_2\cdots f_s], I_{1})\\
\notag&=\ell e_1\cdot \Phi(f_1f_2\cdots f_s, I_1)-v^{-1}\ell\cdot\Phi\Big(\frac{k_1-k_1^{-1}}{v-v^{-1}}f_2\cdots f_s, I_{1}\Big)\\
\notag&=\ell e_1f_1f_2\cdots f_s\cdot \Phi(1, I_1)-v^{-1}\ell\frac{vk_1-v^{-1}k_1^{-1}}{v-v^{-1}}f_2\cdots f_s\cdot \Phi(1, I_{1})\\
\notag&\text{(by induction assumption)}\\
\notag\text{(by~\eqref{eq:Phi1})}&=v^{\lambda_1-\lambda_{r+1}}\ell e_1f_1f_2\cdots f_s\ell f_1f_2\cdots f_r \cdot  w_{\lambda,r}-v^{\lambda_1-\lambda_{r+1}-1}\ell\frac{vk_1-v^{-1}k_1^{-1}}{v-v^{-1}}f_2\cdots f_s\ell f_1f_2\cdots f_r \cdot  w_{\lambda,r}\\
\notag\text{(by Corollary~\ref{cor:fmove})}&=\frac{v^{\lambda_1-\lambda_{r+1}-1}}{(v+v^{-1})^s}\Big(\ell e_1\ell f_1^2f_2^2\cdots f_s^2 f_{s+1}\cdots f_r -(v+v^{-1})\ell\frac{vk_1-v^{-1}k_1^{-1}}{v-v^{-1}}\ell f_1f_2^2\cdots f_s^2 f_{s+1}\cdots f_r\Big) \cdot  w_{\lambda,r}\\
\notag\text{(by~\eqref{eq:mun.1} and~\eqref{eq:mun.3})}&=\frac{v^{\lambda_1-\lambda_{r+1}-1}}{(v+v^{-1})^s}\ell\Big(e_1 f_1^2 -(v+v^{-1})\frac{vk_1-v^{-1}k_1^{-1}}{v-v^{-1}} f_1\Big)f_2^2\cdots f_s^2 f_{s+1}\cdots f_r \cdot  w_{\lambda,r}.
\end{align}
Evaluating the term inside the middle parentheses in the last line above:
\begin{align*}
 e_1 f_1^2-(v+v^{-1})\frac{vk_1-v^{-1}k_1^{-1}}{v-v^{-1}} f_1& =  f_1e_1f_1+\frac{k_1-k_1^{-1}}{v-v^{-1}}f_1-(v+v^{-1})\frac{vk_1-v^{-1}k_1^{-1}}{v-v^{-1}} f_1\\
& = f_1^2e_1+f_1\frac{k_1-k_1^{-1}}{v-v^{-1}}+ \frac{k_1-k_1^{-1}}{v-v^{-1}}f_1-(v+v^{-1})\frac{vk_1-v^{-1}k_1^{-1}}{v-v^{-1}} f_1\\
& = f_1^2e_1+\frac{v^2k_1-v^{-2}k_1^{-1}}{v-v^{-1}}f_1+\frac{k_1-k_1^{-1}}{v-v^{-1}}f_1-(v+v^{-1})\frac{vk_1-v^{-1}k_1^{-1}}{v-v^{-1}} f_1\\
& = f_1^2e_1.
\end{align*}
Since $e_1$ commutes past all $f_j$'s for $j>1$ to act on $w_{\lambda, r}$ by zero, we conclude that $\ell\cdot \Phi(f_1f_2\cdots f_s, I_2)=0$. Next, we suppose that $s\geq r+1$. In that case, we again have:
\begin{align*}
\ell\cdot\Phi(f_1f_2\cdots f_s,I_{2})&=\ell e_1\cdot \Phi(f_1f_2\cdots f_s, I_1)-v^{-1}\ell\cdot\Phi\Big(\frac{k_1-k_1^{-1}}{v-v^{-1}}f_2\cdots f_s, I_{1}\Big)\\
&=\ell e_1f_1f_2\cdots f_r\cdot \Phi(f_{r+1}f_{r+2}\cdots f_s, I_1)-v^{-1}\ell\frac{vk_1-v^{-1}k_1^{-1}}{v-v^{-1}}f_2\cdots f_r\cdot \Phi(f_{r+1}f_{r+2}\cdots f_s, I_{1})\\
&\text{(by induction assumption)}\\
\text{(by~\eqref{eq:Phi-fr})}&=v^{\lambda_{r+1}-\lambda_1+1}\Big(\ell e_1f_1f_2\cdots f_r-v^{-1}\ell\frac{vk_1-v^{-1}k_1^{-1}}{v-v^{-1}}f_2\cdots f_r\Big)\ell f_1f_2\cdots f_{r+1} \cdot\Phi(f_{r+2}\cdots f_s, I_{r+1})\\
&=v^{\lambda_{r+1}-\lambda_1+1}\Big(\ell e_1f_1f_2\cdots f_r-v^{-1}\ell\frac{vk_1-v^{-1}k_1^{-1}}{v-v^{-1}}f_2\cdots f_r\Big)\ell f_1f_2\cdots f_{r+1}f_{r+2}\cdots f_s \cdot\Phi(1, I_{r+1})\\
&\text{(by induction assumption)}\\
\text{(by~\eqref{eq:topvect})}&=v^{\lambda_{r+1}-\lambda_1+1}\Big(\ell e_1f_1f_2\cdots f_r-v^{-1}\ell\frac{vk_1-v^{-1}k_1^{-1}}{v-v^{-1}}f_2\cdots f_r\Big)\ell f_1f_2\cdots f_{r+1}f_{r+2}\cdots f_s \cdot w_{\lambda, r},
\end{align*}
which is zero by exactly the same argument as in the case when $s<r+1$ using Corollary~\ref{cor:fmove}. This completes the proof of the fact that:
\begin{equation} \label{eq:ell20}\ell\cdot \Phi(Q(f), I_2) = 0 = \Phi(\ell\cdot (Q(f), I_2)).\end{equation}
Finally, we suppose that $i>1$ (so $e_i$ commutes with $\ell$). Then, we have by equation~\eqref{eq:Phi-I+1}:
\[\ell \cdot \Phi(Q(f),I_{i+1})=\ell e_i\cdot \Phi( Q(f), I_i)-v^{-1}\ell\cdot \Phi([e_i,Q(f)], I_{i})=0,\]
where the last equality follows by the induction assumption.

\underline{\textbf{Step $3$: $z=e_j$}}

Let $z=e_j$ where $j\not\in\{i-1,i,i+1\}$. Then, 
\begin{align*}\Phi( e_j\cdot(Q(f),I_{i+1}))=v^{\delta_{j,r+1}}\Phi( e_jQ(f),I_{i+1})&=v^{\delta_{j,r+1}}(e_i\cdot \Phi( e_jQ(f), I_i)-v^{-1}\Phi([e_i,e_jQ(f)], I_{i}))\\
\text{(because $e_j\cdot u_{I_i}=0$ and $[e_i,e_j]=0$) }&=e_i\cdot \Phi(e_j\cdot(Q(f), I_i))-v^{\delta_{j,r+1}-1}\Phi(e_j[e_i,Q(f)], I_{i})\\
\text{(by the induction step and because $e_j\cdot u_{I_i}=0$) }&=e_ie_j\cdot \Phi(Q(f), I_i)-v^{-1}\Phi(e_j\cdot([e_i,Q(f)], I_{i}))\\
\text{(by the induction step and because $[e_i,e_j]=0$) }&=e_je_i\cdot \Phi(Q(f), I_i)-v^{-1}e_j\cdot\Phi([e_i,Q(f)], I_{i}) \\
&=  e_j\cdot\Phi(Q(f),I_{i+1}).\end{align*}

Next, we first consider the case when $i\geq 2$ and $z=e_{i-1}$.
\begin{align*}
&~(v+v^{-1})e_{i-1}\cdot \Phi( Q(f),I_{i+1})\\
=&~(v+v^{-1})e_{i-1}e_i\cdot \Phi( Q(f), I_i)-v^{-1}(v+v^{-1})e_{i-1}\cdot\Phi([e_i,Q(f)], I_{i})\\
=&~(v+v^{-1})e_{i-1}e_i\cdot(e_{i-1}\cdot \Phi( Q(f), I_{i-1})-v^{-1}\Phi([e_{i-1},Q(f)], I_{i-1})) -(1+v^{-2})e_{i-1}\cdot\Phi([e_i,Q(f)], I_{i})\\
=&~(v+v^{-1})e_{i-1}e_ie_{i-1}\cdot \Phi( Q(f), I_{i-1})-(1+v^{-2})e_{i-1}e_i\cdot\Phi([e_{i-1},Q(f)], I_{i-1}) \\
&-(1+v^{-2})e_{i-1}\cdot\Phi([e_i,Q(f)], I_{i})\\
=&~e_{i-1}^2e_i\cdot \Phi( Q(f), I_{i-1})+e_ie_{i-1}^2\cdot \Phi( Q(f), I_{i-1})-(1+v^{-2})e_{i-1}e_i\cdot\Phi([e_{i-1},Q(f)], I_{i-1})\\
&-(1+v^{-2})e_{i-1}\cdot\Phi([e_i,Q(f)], I_{i})\\
=&~e_{i-1}^2\cdot \Phi([e_i,Q(f)], I_{i-1})+e_ie_{i-1}^2\cdot \Phi( Q(f), I_{i-1})-(1+v^{-2})e_{i-1}\cdot\Phi([e_i,[e_{i-1},Q(f)]], I_{i-1})\\
&-(v+v^{-1})\Phi([e_{i-1},[e_i,Q(f)]], I_{i})\\
=&~v^{-2}\Phi([e_{i-1}^2,[e_i,Q(f)]], I_{i-1})+(v+v^{-1})\Phi([e_{i-1},[e_i,Q(f)]], I_i)+v^{-2}e_i\cdot \Phi([e_{i-1}^2,Q(f)], I_{i-1})\\
&+(v+v^{-1})e_i\cdot \Phi([e_{i-1},Q(f)], I_{i})-(1+v^{-2})e_{i-1}\cdot\Phi([e_i,[e_{i-1},Q(f)]], I_{i-1}) -(v+v^{-1})\Phi([e_{i-1},[e_i,Q(f)]], I_{i})\\
=&~v^{-2}\Phi([e_{i-1}^2,[e_i,Q(f)]], I_{i-1})+(v+v^{-1})\Phi([e_{i-1},[e_i,Q(f)]], I_i)+v^{-2} \Phi([e_i,[e_{i-1}^2,Q(f)]], I_{i-1})\\
&+(1+v^{-2})\Phi([e_i,[e_{i-1},Q(f)]], I_{i})+(v+v^{-1})\Phi([e_{i-1},Q(f)], I_{i+1})-v^{-1}(1+v^{-2})\Phi([e_{i-1},[e_i,[e_{i-1},Q(f)]]], I_{i-1})\\
&-(1+v^{-2})\Phi([e_i,[e_{i-1},Q(f)]], I_{i})-(v+v^{-1})\Phi([e_{i-1},[e_i,Q(f)]], I_{i})\\
=&~(v+v^{-1})\Phi([e_{i-1},Q(f)], I_{i+1})\\
=&~(v+v^{-1})\Phi(e_{i-1}\cdot (Q(f), I_{i+1})).
\end{align*}
The case when $z=e_i$ is similar and has been omitted. (When $i=1$, the computation uses relation~\eqref{eq:mun.4} instead of~\eqref{eq:usln.5} as well as equation~\eqref{eq:ell20}.) 

Finally, when $z=e_{i+1}$ and $i\leq r-1$, we have the equality:
\[e_{i+1}\cdot\Phi( Q(f),I_{i+1}) = \Phi(Q(f),I_{i+2})+v^{-1}\Phi([e_{i+1},Q(f)], I_{i+1}),\]
by \eqref{eq:Phi-I+1} with index $i+1$ instead of $i$, proving the required claim. When $i=r$, notice that $e_{r+1}\cdot (Q(f), I_{r+1}) = ([e_{r+1},Q(f)], I_{r+1})$, so we need to prove the equality:
\begin{equation}\label{eq:er+1}e_{r+1}\cdot \Phi(Q(f), I_{r+1}) = \Phi([e_{r+1},Q(f)], I_{r+1}).\end{equation}
When $Q(f)=1$, the RHS of \eqref{eq:er+1} is zero, whereas the LHS is equal to:
\[e_{r+1}\cdot \Phi(1, I_{r+1})= e_{r+1}\cdot w_{\lambda,r}=0,\]
where the first equality follows from~\eqref{eq:topvect} and the last equality from the fact that $w_{\lambda,r}$ is a highest weight vector. 
Next, we suppose that $\ol\deg(Q(f))\geq 1$. We can write $Q(f)$ as a linear combination of elements of the form $f_{j'}\widetilde Q(f)$ for varying $j'$ and $\ol\deg(\widetilde Q(f))< \ol\deg(Q(f))$. Replacing $Q(f)$ by $f_{j'}\widetilde Q(f)$ in \eqref{eq:er+1}, it is clear that the equality follows by the induction assumption as long as $j'\neq r,r+1$. Suppose $j'=r$. Then,
\begin{align*}
e_{r+1}\cdot \Phi(f_{r}\widetilde Q(f), I_{r+1})&=e_{r+1}f_{r}\cdot \Phi(\widetilde Q(f), I_{r+1}) - v^{\mu_{r+1}-\mu_r-2}e_{r+1}\cdot\Phi(\widetilde{Q}(f),I_r)\\
\text{(because $[e_{r+1},f_r]=0$ and by induction) }&=f_{r}e_{r+1}\cdot \Phi(\widetilde Q(f), I_{r+1}) - v^{\mu_{r+1}-\mu_r-1}\Phi([e_{r+1,}\widetilde{Q}(f)],I_r)\\
\text{(by induction) }&=f_{r}\cdot \Phi([e_{r+1},\widetilde Q(f)], I_{r+1}) - v^{\mu_{r+1}-\mu_r-1}\Phi([e_{r+1,}\widetilde{Q}(f)],I_r)\\
\text{(by induction and by the action of $f_r$) }&=\Phi(f_r[e_{r+1},\widetilde Q(f)], I_{r+1})\\
\text{(because $[e_{r+1},f_r]=0$) }&=\Phi([e_{r+1},f_r\widetilde Q(f)], I_{r+1}),
\end{align*}
which is the required equality. Finally, when $j'=r+1$, we have:
\begin{align*}
e_{r+1}\cdot \Phi(f_{r+1}\widetilde Q(f), I_{r+1})&=e_{r+1}f_{r+1}\cdot \Phi(\widetilde Q(f), I_{r+1})\\
&=f_{r+1}e_{r+1}\cdot \Phi(\widetilde Q(f), I_{r+1}) + \frac{k_{r+1}-k_{r+1}^{-1}}{v-v^{-1}}\cdot \Phi(\widetilde Q(f), I_{r+1})\\
\text{(by induction) }&=f_{r+1}\cdot \Phi([e_{r+1},\widetilde Q(f)], I_{r+1}) + \frac{k_{r+1}-k_{r+1}^{-1}}{v-v^{-1}}\cdot \Phi(\widetilde Q(f), I_{r+1})\\
\text{(by induction and by the actions of $f_{r+1}$ and $k_{r+1}$) }&=\Phi(f_{r+1}[e_{r+1},\widetilde Q(f)], I_{r+1}) + \Phi\Big(\frac{k_{r+1}-k_{r+1}^{-1}}{v-v^{-1}}\widetilde Q(f), I_{r+1}\Big)\\
&=\Phi([e_{r+1},f_{r+1}\widetilde Q(f)], I_{r+1}).
\end{align*}
This proves equivariance of the map $\Phi$ with respect to all the generators, thus proving the lemma.
\end{proof}

\begin{lemma} \label{lem:general}
For any $I\in [n]_r$, $Q(f)\in \bb$ and $z$ a generator of $MU(n)$, we have the equality:
\[\Phi(z\cdot (Q(f),I)) = z\cdot \Phi(Q(f), I).\]
\end{lemma}

\begin{proof}
Let $I=\{i_1<i_2<\dots<i_r\}$. We use induction on the lexicographic order of $I$. The case when $i_r\leq r+1$ corresponds to $I=I_i$ for some $i$, and has been dealt with in 
Lemma \ref{lem:I_r+1}. Pick the largest $j$ such that $i_j\in I$, but $i_j-1\not\in I$. Let $I'=(I\cup\{i_j-1\})\sm\{i_j\}$. Then, by~\eqref{eq:Phi-I'},
\[\Phi(Q(f),I):=v^{\mu_{i_j-1}-\mu_{i_j}}f_{i_j-1}\cdot \Phi(Q(f),I')-v^{\mu_{i_j-1}-\mu_{i_j}}\Phi(f_{i_j-1}Q(f),I').\]
 As long as $z\not \in\{f_{i_j}, f_{i_j-1}, f_{i_j-2}, e_{i_j-1}\}$, the claim of the lemma follows directly by acting on both sides of the above equation by $z$: because $z f_{i_j-1}=f_{i_j-1}z$, we can use the induction hypothesis since $I'<I$ in the lexicographic order. (Note that we necessarily have $i_j-1>1$ and so $\ell$ commutes with $f_{i_j-1}$.)

 
 When $z=e_{i_j-1}$, we see that:
 \begin{align*}
 v^{\mu_{i_j}-\mu_{i_j-1}}e_{i_j-1}\cdot \Phi(Q(f),I)&=e_{i_j-1}\cdot(f_{i_j-1}\cdot \Phi(Q(f),I')-\Phi(f_{i_j-1}Q(f),I'))\\
 &=e_{i_j-1}f_{i_j-1}\cdot \Phi(Q(f),I')-e_{i_j-1}\cdot\Phi(f_{i_j-1}Q(f),I')\\
\text{(by induction and because $e_{i_j-1}u_{I'}=0$)}&=f_{i_j-1}e_{i_j-1}\cdot \Phi(Q(f),I')+\frac{k_{i_j-1}-k_{i_j-1}^{-1}}{v-v^{-1}}\cdot \Phi(Q(f),I')\\&~-v\Phi([e_{i_j-1},f_{i_j-1}Q(f)],I')\\
 \text{(by induction and because $e_{i_j-1}u_{I'}=0$)}  &=vf_{i_j-1}\cdot \Phi([e_{i_j-1},Q(f)],I')+\frac{k_{i_j-1}-k_{i_j-1}^{-1}}{v-v^{-1}} \cdot\Phi(Q(f),I')\\&~-v\Phi(f_{i_j-1}[e_{i_j-1},Q(f)],I')-v\Phi\Big(\frac{k_{i_j-1}-k_{i_j-1}^{-1}}{v-v^{-1}}Q(f),I'\Big)\\
\text{(by \eqref{eq:Phi-I'} and by the action of $k^{\pm 1}_{i_j-1}$)}  &=v^{\mu_{i_j}-\mu_{i_j-1}-1}\Phi([e_{i_j-1},Q(f)],I)+\Phi(k_{i_j-1}^{-1}Q(f),I')\\
    &=v^{\mu_{i_j}-\mu_{i_j-1}-1}\Phi([e_{i_j-1},Q(f)],I)+v^{\mu_{i_j}-\mu_{i_j-1}}\Phi(Q(f),I')\\
\text{(by the action of $e_{i_j-1}$)}  &=v^{\mu_{i_j}-\mu_{i_j-1}}\Phi(e_{i_j-1}\cdot(Q(f),I)).
 \end{align*}

Next, we consider the action of the elements $\{f_{i_j}, f_{i_j-1}, f_{i_j-2}\}$. We first consider the case when $j\neq 1$ and $i_{j-1}=i_j-2$. (Note that $i_{j-1}< i_j-1$ since $i_j-1\not\in I$.) In that case, we have by \eqref{eq:Phi-I'} that:
\begin{align*}f_{i_j-2}\cdot \Phi(Q(f),I) &= \Phi(f_{i_j-2}Q(f), I) + v^{\mu_{i_j-1}-\mu_{i_j-2}}\Phi(Q(f), (I\cup\{i_j-1\})\sm\{i_j-2\})\\
&= \Phi(f_{i_j-2}\cdot (Q(f),I)),\end{align*}
proving equivariance with respect to $f_{i_j-2}$. Also, suppose it is the case that $\{i_1,i_2,\dots, i_{j-1}\} \neq \{1,2,\dots, j-1\}$. In that case, we can choose the minimum $k$ with $1\leq k\leq j-1$ such that $i_k\in \{i_1,i_2,\dots, i_{j-1}\}$ but $i_k-1\not \in \{i_1,i_2,\dots, i_{j-1}\}$. Let $\widetilde I=(I\cup\{i_k-1\})\sm \{i_k\}$. Then, since $\widetilde{I}<I$, by induction we have for all $Q(f)$ the equality:
\[\Phi(Q(f),I)=v^{\mu_{i_k-1}-\mu_{i_k}}f_{i_k-1}\cdot \Phi(Q(f),\widetilde I)-v^{\mu_{i_k-1}-\mu_{i_k}}\Phi(f_{i_k-1}Q(f),\widetilde I).\]
Note that $i_k\leq i_{j-1}= i_j-2$. Then, we must have that $[f_{i_k-1},f_i]=0$ for $i\in\{i_j, i_j-1\}$. In that case, we can apply $f_i$ to both sides of the above equality to conclude the proof. On the other hand, if $\{i_1,i_2,\dots, i_{j-1}\}=\{1,2,\dots,j-1\}$ and $i_{j-1}=i_j-2$, this would imply that $I=I_{j}$, which is a case that has already been dealt with in the previous lemma.

Therefore, we can assume that $i_{j-1}<i_j-2$ or $j=1$. In that case, we define $I''=(I'\cup \{i_j-2\})\sm\{i_j-1\}$. So, by induction, we have the equality:
\[\Phi(Q(f),I')=v^{\mu_{i_j-2}-\mu_{i_j-1}}f_{i_j-2}\cdot \Phi(Q(f),I'')-v^{\mu_{i_j-2}-\mu_{i_j-1}}\Phi(f_{i_j-2}Q(f),I'').\]
With that, we can consider the case when $z=f_{i_j-2}$:
\begin{align*}
&~(v+v^{-1})v^{\mu_{i_j}-\mu_{i_j-2}}f_{i_j-2}\cdot \Phi(Q(f), I)\\
=&~(v+v^{-1})v^{\mu_{i_j-1}-\mu_{i_j-2}}f_{i_j-2}f_{i_j-1}\cdot \Phi(Q(f),I')-(v+v^{-1})v^{\mu_{i_j-1}-\mu_{i_j-2}}f_{i_j-2}\cdot\Phi(f_{i_j-1}Q(f),I')\\
=&~(v+v^{-1})f_{i_j-2}f_{i_j-1}\cdot(f_{i_j-2}\cdot \Phi(Q(f),I'')-\Phi(f_{i_j-2}Q(f),I''))-(v+v^{-1})v^{\mu_{i_j-1}-\mu_{i_j-2}}f_{i_j-2}\cdot\Phi(f_{i_j-1}Q(f),I')\\
=&~(v+v^{-1})f_{i_j-2}f_{i_j-1}f_{i_j-2}\cdot \Phi(Q(f),I'')-(v+v^{-1})f_{i_j-2}f_{i_j-1}\cdot \Phi(f_{i_j-2}Q(f),I'')\\
&-(v+v^{-1})v^{\mu_{i_j-1}-\mu_{i_j-2}}f_{i_j-2}\cdot\Phi(f_{i_j-1}Q(f),I')\\
=&~f_{i_j-2}^2f_{i_j-1}\cdot\Phi(Q(f),I'') + f_{i_j-1}f_{i_j-2}^2\cdot\Phi(Q(f),I'')\\
&-(v+v^{-1})f_{i_j-2}f_{i_j-1}\cdot \Phi(f_{i_j-2}Q(f),I'')-(v+v^{-1})v^{\mu_{i_j-1}-\mu_{i_j-2}}f_{i_j-2}\cdot\Phi(f_{i_j-1}Q(f),I')\\
=&~f_{i_j-2}^2\cdot\Phi(f_{i_j-1}Q(f),I'')+ f_{i_j-1}f_{i_j-2}^2\cdot\Phi(Q(f),I'')\\ 
&-(v+v^{-1})f_{i_j-2}\cdot \Phi(f_{i_j-1}f_{i_j-2}Q(f),I'')-(v+v^{-1})v^{\mu_{i_j-1}-\mu_{i_j-2}}\Phi(f_{i_j-2}f_{i_j-1}Q(f),I')\\
=&~\Phi(f_{i_j-2}^2f_{i_j-1}Q(f),I'')+ (1+v^{2})v^{\mu_{i_j-1}-\mu_{i_j-2}-1}\Phi(f_{i_j-2}f_{i_j-1}Q(f), I')+ f_{i_j-1}\cdot\Phi(f_{i_j-2}^2Q(f),I'')\\ 
&+(1+v^{2})v^{\mu_{i_j-1}-\mu_{i_j-2}}f_{i_j-1}\cdot \Phi(f_{i_j-2}Q(f), I')-(v+v^{-1})f_{i_j-2}\cdot \Phi(f_{i_j-1}f_{i_j-2}Q(f),I'')\\
&-(v+v^{-1})v^{\mu_{i_j-1}-\mu_{i_j-2}}\Phi(f_{i_j-2}f_{i_j-1}Q(f),I')\\
=&~\Phi(f_{i_j-2}^2f_{i_j-1}Q(f),I'')+ (1+v^{2})v^{\mu_{i_j-1}-\mu_{i_j-2}-1}\Phi(f_{i_j-2}f_{i_j-1}Q(f), I')+ \Phi(f_{i_j-1}f_{i_j-2}^2Q(f),I'')\\ 
&+(1+v^{2})v^{\mu_{i_j-1}-\mu_{i_j-2}}\Phi(f_{i_j-1}f_{i_j-2}Q(f), I')+(1+v^{2})v^{\mu_{i_j}-\mu_{i_j-2}-1} \Phi(f_{i_j-2}Q(f), I)\\
&-(v+v^{-1})\Phi(f_{i_j-2}f_{i_j-1}f_{i_j-2}Q(f),I'')-(v+v^{-1})v^{\mu_{i_j-1}-\mu_{i_j-2}+1}\Phi(f_{i_j-1}f_{i_j-2}Q(f),I')\\
&-(v+v^{-1})v^{\mu_{i_j-1}-\mu_{i_j-2}}\Phi(f_{i_j-2}f_{i_j-1}Q(f),I')\\
=&~(v+v^{-1})v^{\mu_{i_j}-\mu_{i_j-2}} \Phi(f_{i_j-2}Q(f), I)\\
=&~(v+v^{-1})v^{\mu_{i_j}-\mu_{i_j-2}} \Phi(f_{i_j-2}\cdot(Q(f), I)).
\end{align*}
The case when $z=f_{i_j-1}$ is fairly similar and has been omitted. The last case that remains to be dealt with is when $z=f_{i_j}$. For this, we first consider the case when $j\neq r$. In that case, $i_{j+1}=i_j+1$. Define $\widetilde{I'} = (I'\cup\{i_j\})\sm\{i_j+1\}$. Then, by induction, we have the equality:
\[\Phi(Q(f),I')=v^{\mu_{i_j}-\mu_{i_j+1}}f_{i_j}\cdot \Phi(Q(f),\widetilde{I'})-v^{\mu_{i_j}-\mu_{i_j+1}}\Phi(f_{i_j}Q(f),\widetilde{I'}).\]
Then, we can check equivariance with respect to $z=f_{i_j}$ by a computation very similar to the one above using this equality and the fact that $\widetilde{I'}<I'<I$.

We finally are left with the case when $j=r\leq n-1$. In that case, we have by~\eqref{eq:Phi-I'}:
\[f_{i_j}\cdot \Phi(Q(f),I) = \Phi(f_{i_j}Q(f),I) + v^{\mu_{i_j+1}-\mu_{i_j}}\Phi(Q(f),(I\cup\{i_j+1\})\sm\{i_j\}),\]
which completes the proof of the claim.
\end{proof}

\section{Classification of finite dimensional representations}\label{sec:ss}

In this section, we prove a complete classification of finite dimensional representations of $MU(n)$ of Type $\mathbf{1}$, by showing 
that the 
representations $\{L_{\lambda,r}\}_{\lambda,r}$ constructed in Section~\ref{sec:mir-reps} provide a complete list of isomorphism classes of simple objects in the category of finite dimensional representations, and that the category is semisimple.

\begin{theorem} \label{thm:class}
Let $M$ be a finite dimensional simple $MU(n)$-module of depth $r$ for some $0\leq r\leq n$. Then $M\simeq L_{\lambda,r}$ for some dominant integral weight $\lambda$.
\end{theorem}
\begin{proof}
Let $x\in M$ be a mirabolic highest weight vector of depth $r$ and fix a weight $\mu=(\mu_1,\mu_2,\cdots,\mu_{n})$ such that $k_i\cdot x = v^{\mu_i-\mu_{i+1}}x.$ Since $M$ is finite dimensional, we must have that $\mu$ is a dominant integral weight for $\uvsln$. Define:
\[\lambda:=\mu - \omega_r.\]
If $r\neq0,n$, notice that since $f_r\cdot x\neq 0$, by \cite[Lemma 5.4(b)]{J} we have $(\mu,\alpha_r)\geq 1$. Hence $(\mu-\omega_r,\alpha_r)=(\mu,\alpha_r)-1\geq 0$, and for $i\neq r$, $(\mu-\omega_r,\alpha_i)=(\mu,\alpha_i)\geq 0$. Thus, $\lambda=\mu-\omega_r$ is again a dominant integral weight.

By the construction of the representation $V_{\lambda,r}$ and Lemma~\ref{lem:maincalc}, there must exist a surjection $V_{\lambda,r}\to M$ of $MU(n)$-modules that sends the vector $w_{\lambda, r}$ to $x$. But then, Theorem~\ref{prop:Verma} implies that $V_{\lambda,r}\simeq M_{\lambda, r}$ and the representation $M_{\lambda,r}$ has a unique finite dimensional quotient $L_{\lambda, r}$ by Proposition~\ref{prop:unique}. So, we conclude that $M\cong L_{\lambda,r}$, completing the proof.
\end{proof}

Next, we show that the category of finite dimensional $MU(n)$-modules is semisimple, which 
is equivalent to showing that there are no non-trivial extensions 
between any of the irreducible representations. We can again restrict our attention to irreducible representations of Type $\bold{1}$.

From Remark \ref{rem:anti-inv}, we recall the anti-involution $\star$ on $MU(n)$:
\[\ell^\star=\ell,\quad (k_i^{\pm 1})^\star=k_i^{\pm 1},\quad e_i^\star=f_i\quad f_i^\star=e_i.\]
Using this anti-involution, we get that for any finite dimensional $MU(n)$-module $V$, there is a natural $MU(n)$-action on the dual space $V^*$ given by $(z\cdot \varphi)(x)=\varphi(z^\star\cdot x)$ for all $z\in MU(n)$, $x\in V$, $\varphi\in V^*$.

\begin{lemma}
If $V$ is a finite dimensional simple $MU(n)$-representation, then $V\cong V^*$.
\end{lemma}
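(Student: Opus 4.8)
The plan is to reduce to the classification of simple modules (Theorem~\ref{thm:classification}) and then to check that the numerical invariants distinguishing the modules $L_{\lambda,r}$ are unchanged under the $\star$-duality. Recall that $V^*$ becomes an $MU(n)$-module via $(a\cdot\xi)(w)=\xi(a^\star\cdot w)$, which is a left action precisely because $\star$ is an anti-involution.

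First I would check that $V^*$ is again a finite dimensional simple $MU(n)$-module: finite-dimensionality is clear, and if $W\subseteq V^*$ is a submodule, then its annihilator $W^\perp=\{w\in V:\xi(w)=0\text{ for all }\xi\in W\}$ is an $MU(n)$-submodule of $V$, since for $a\in MU(n)$, $w\in W^\perp$, $\xi\in W$ we have $\xi(a\cdot w)=(a^\star\cdot\xi)(w)=0$ because $a^\star\cdot\xi\in W$. Simplicity of $V$ then forces $W^\perp\in\{0,V\}$, hence $W\in\{0,V^*\}$. By Theorem~\ref{thm:classification} we may therefore write $V\cong L_{\lambda,r}$ and $V^*\cong L_{\mu,s}$ for dominant weights $\lambda,\mu$ and integers $0\le r,s\le n$, and it remains to show $(\mu,s)=(\lambda,r)$.

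The key observation is that $\ell^\star=\ell$ and $(k_i^{\pm1})^\star=k_i^{\pm1}$, so $\ell$ and each $k_i$ act on $V^*$ as the transposes of their actions on $V$; consequently $V^*$ has the same simultaneous $k_i$-eigenspace dimensions as $V$ (that is, the same weights with the same multiplicities, since we work in type $\bold{1}$) and the same $\ell$-eigenspace dimensions. I would then run the argument of Proposition~\ref{prop:nonisom} verbatim: the $0$- and $1$-eigenspaces of $\ell$ on $L_{\lambda,r}$ have dimensions $\dim(L_\lambda)\binom{n-1}{r-1}$ and $\dim(L_\lambda)\binom{n-1}{r}$, whose ratio $r/(n-r)$ is preserved by dualization and pins down $s=r$; and since the weight $\lambda+\omega_r$ occurs in $V=L_{\lambda,r}$, it must occur in $V^*=L_{\mu,r}$, giving $\lambda\le\mu$, while the symmetric argument gives $\mu\le\lambda$, hence $\mu=\lambda$. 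Therefore $V^*\cong L_{\lambda,r}\cong V$. There is no genuine obstacle here; the only point requiring care is the transpose observation that $\star$-duality preserves weight multiplicities (because $k_i^\star=k_i$) and $\ell$-eigenspace dimensions (because $\ell^\star=\ell$), after which the uniqueness half of the classification theorem does the rest.
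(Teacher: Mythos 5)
Your proposal is correct and follows essentially the same route as the paper: observe that $V^*$ is simple, use $\ell^\star=\ell$ and $k_i^\star=k_i$ to conclude that $V$ and $V^*$ have identical $(k_i,\ell)$-weight space dimensions, and then invoke the argument of Proposition~\ref{prop:nonisom} to pin down the pair $(\lambda,r)$. The extra details you supply (simplicity of the dual via annihilators, rerunning the eigenspace-ratio and highest-weight comparison) are exactly the steps the paper leaves implicit.
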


\begin{proof}
Note that if $V$ is simple, so is $V^*$. Also, as `$\star$' acts via the identity on the $k_i$'s and $\ell$, we note that the modules $V$ and $V^*$ have the same mirabolic weight space decomposition. Hence, to prove the lemma, it suffices to show that if $(\lambda, r)\neq (\mu,s)$, then the simple modules $L_{\lambda,r}$ and $L_{\mu,s}$ do not have the same mirabolic weight space decomposition. This follows from the proof of Proposition~\ref{prop:nonisom}.
\end{proof}

\begin{theorem} \label{thm:ss}
The category of finite dimensional $MU(n)$-representations is semisimple.
\end{theorem}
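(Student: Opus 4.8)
The plan is to derive semisimplicity from the classification in Theorem~\ref{thm:classification}, the uniqueness statement of Proposition~\ref{prop:unique}, and the self-duality of simple modules. The category of finite dimensional $MU(n)$-modules is semisimple if and only if $\mathrm{Ext}^1(L_{\lambda,r},L_{\mu,s})=0$ for all pairs $(\lambda,r)$, $(\mu,s)$, i.e.\ if and only if every short exact sequence $0\to L_{\mu,s}\to E\xrightarrow{\pi} L_{\lambda,r}\to 0$ of finite dimensional $MU(n)$-modules splits; the general statement reduces to this one by the usual induction on composition length. Applying the anti-involution $\star$ together with the isomorphism $L\cong L^*$ for every finite dimensional simple $L$ gives $\mathrm{Ext}^1(L_{\lambda,r},L_{\mu,s})\cong\mathrm{Ext}^1(L_{\mu,s},L_{\lambda,r})$, so after possibly interchanging the two simple modules we may assume $\mu+\omega_s\not>\lambda+\omega_r$.

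First I would lift the cyclic generator. Since $E$ is a finite dimensional, hence mirabolic weight, module and $(L_{\lambda,r})_{\lambda+\omega_r}$ is one dimensional — spanned by $x_{\lambda,r}$, which has $\ell$-eigenvalue $\epsilon$ equal to $0$ when $r\ge 1$ and to $1$ when $r=0$ — one can pick a mirabolic weight vector $\tilde x\in E$ of weight $\lambda+\omega_r$ with the same $\ell$-eigenvalue $\epsilon$ and with $\pi(\tilde x)=x_{\lambda,r}$. As $\lambda+\omega_r+\alpha_i$ lies strictly above the highest weights of both $L_{\lambda,r}$ and $L_{\mu,s}$ — this is where the assumption $\mu+\omega_s\not>\lambda+\omega_r$ is used — it is a weight of neither, whence $e_i\tilde x=0$ for all $i$ and $\tilde x$ is a genuine mirabolic highest weight vector of $E$. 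Because $x_{\lambda,r}$ has depth $r$, the vectors $\ell f_1\cdots f_j\cdot\tilde x$ with $0\le j<r$ all lie in the submodule $L_{\mu,s}$, while $\ell f_1\cdots f_r\cdot\tilde x\ne 0$; hence the depth $d$ of $\tilde x$ in $E$ satisfies $d\le r$.

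The crux is to prove $d=r$. If $d<r$, put $z:=\ell f_1\cdots f_d\cdot\tilde x$; this is a nonzero $\ell$-fixed weight vector lying in $L_{\mu,s}$, and since $L_{\mu,s}$ is simple we have $MU(n)\cdot z=L_{\mu,s}$. The goal is to reach a contradiction by showing that $\tilde x$ itself must then lie in $MU(n)\cdot z\subseteq L_{\mu,s}$, which is impossible since $\pi(\tilde x)=x_{\lambda,r}\ne 0$; concretely one wants $e_d e_{d-1}\cdots e_1\cdot z$ to be a nonzero scalar multiple of $\tilde x$, via a commutation computation in the spirit of Lemma~\ref{lem:maincalc} adapted so that its depth hypotheses are met. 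Granting $d=r$: the vector $\tilde x$ is a mirabolic highest weight vector of weight $\lambda+\omega_r$ and depth exactly $r$, hence it satisfies precisely the defining relations of the cyclic generator $x_\lambda\otimes u$ of $M_{\lambda,r}=M_\lambda\otimes W_r$ (which generates $M_{\lambda,r}$ by the argument of Lemma~\ref{lem:highcyclic}). Thus there is a surjection $M_{\lambda,r}\twoheadrightarrow MU(n)\cdot\tilde x$, so $MU(n)\cdot\tilde x$ is a nonzero finite dimensional quotient of $M_{\lambda,r}$, hence isomorphic to $L_{\lambda,r}$ by Proposition~\ref{prop:unique}. Then $\pi$ restricts to a nonzero homomorphism $MU(n)\cdot\tilde x\to L_{\lambda,r}$ of simple modules, hence an isomorphism, so $MU(n)\cdot\tilde x$ is a complement of $L_{\mu,s}$ in $E$ and the extension splits, contradicting non-splitness.

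I expect the depth control in the third paragraph — ruling out that the lifted highest weight vector acquires a strictly smaller depth in $E$ than $r$ — to be the main obstacle, as it is the one point where soft weight-theoretic bookkeeping does not suffice and one must invoke the finer structure encoded in Proposition~\ref{prop:main} and Lemma~\ref{lem:maincalc} together with the finite length of $E$; by contrast, the reduction to $\mathrm{Ext}^1$-vanishing, the self-duality input, and the final appeal to Proposition~\ref{prop:unique} are routine. Finally, the boundary cases $r=0$ (and, dually, $s=n$), where $\ell$ respectively $1-\ell$ acts invertibly on $E$ and the extension is pulled back along $\pi_1$ (resp.\ $\pi_0$) from a short exact sequence of finite dimensional $\uvsln$-modules — which splits by semisimplicity of that category — should be dispatched separately and are straightforward.
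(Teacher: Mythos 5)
Your overall architecture (lift the mirabolic highest weight vector, use $\star$-duality to reduce to the case $\mu+\omega_s\not>\lambda+\omega_r$, conclude via Proposition~\ref{prop:unique}) is the same as the paper's, but the step you yourself flag as the crux --- that the chosen lift $\tilde x$ has depth exactly $r$ --- is a genuine gap, and the mechanism you propose for it cannot work. Since $z=\ell f_1\cdots f_d\cdot\tilde x$ lies in $\ker\pi=L_{\mu,s}$, which is a \emph{submodule}, every element of $MU(n)\cdot z$ stays inside $L_{\mu,s}$; so $e_d\cdots e_1\cdot z$ can never be a nonzero multiple of $\tilde x$, and no commutation computation in the spirit of Lemma~\ref{lem:maincalc} can force this (such computations produce a scalar, but nothing makes the scalar nonzero here). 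Worse, the assertion that an admissible lift must have depth $r$ is false: take $n=3$, $\lambda=(1,0,0)$, $r=2$, $\mu=(1,1,0)$, $s=1$, so that $\lambda+\omega_2=\mu+\omega_1=(2,1,0)$ and both $x_{\lambda,2}$ and $x_{\mu,1}$ have $\ell$-eigenvalue $0$; in the split module $E=L_{\lambda,2}\oplus L_{\mu,1}$ the vector $\tilde x=x_{\lambda,2}\oplus x_{\mu,1}$ satisfies all of your requirements (correct weight, correct $\ell$-eigenvalue, killed by all $e_i$, maps to $x_{\lambda,2}$) yet has depth $1<r$. Hence the implication ``$d<r$ leads to a contradiction'' cannot follow from a computation valid in arbitrary modules, and you give no indication of how non-splitness would enter such a computation.

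The paper's proof shows that this depth control is unnecessary: it takes any lift $x'$ killed by the $e_i$'s, observes that the cyclic module $M'=MU(n)\cdot x'$ is a quotient of $M_{\lambda',r'}$ for \emph{some} pair $(\lambda',r')$ (not insisting on $(\lambda,r)$), applies Proposition~\ref{prop:unique} to identify $M'\cong L_{\lambda',r'}$, and then uses $\pi(M')=L_{\lambda,r}\neq 0$ together with simplicity to force $(\lambda',r')=(\lambda,r)$, giving $M=M'\oplus\ker\pi$; the parameters of the covering standard module are thus pinned down a posteriori by the surjection onto $L_{\lambda,r}$ rather than a priori by a depth computation on the lift. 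If you restructure your third paragraph this way, the obstacle you identified disappears. Two smaller points: your claim that there is a surjection $M_{\lambda,r}\twoheadrightarrow MU(n)\cdot\tilde x$ because $\tilde x$ ``satisfies the defining relations of the generator'' tacitly uses a universal property of $M_{\lambda,r}$ (via the character of $MU(n)_{\mathbf 0}$ from Proposition~\ref{prop:main}), which should be said; and your boundary remark that $\ell$ (resp.\ $1-\ell$) acts invertibly on $E$ when $r=0$ (resp.\ $s=n$) is only correct when both composition factors have the corresponding parameter, so it does not dispose of those cases as stated.
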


\begin{proof}
(This proof adapts the arguments of Theorem 5.17 of \cite{J}.) Pick any two simple representations $L_{\lambda,r}$ and $L_{\mu,s}$. Suppose we have a short exact sequence of $MU(n)$-representations:
\[0\ra L_{\mu,s} \xra{i} V \xra{\pi} L_{\lambda,r}\ra 0.\]
Proving the theorem is equivalent to showing that the above sequence splits. Let $x_{\lambda,r} \in L_{\lambda,r}$ be the mirabolic highest weight vector $x_{\lambda}\otimes u$ having weight $\lambda+\omega_r$ and depth $r$.

Suppose $\lambda+\omega_r \not\leq \mu + \omega_s $. Pick $x' \in V$ such that $\pi(x')=x_{\lambda,r}$ and having the maximal possible depth amongst all such vectors. Then, $x'$ has weight $\lambda+\omega_r$ and depth at most $r$. Furthemore, the assumption $\lambda+\omega_r \not\leq \mu + \omega_s $ ensures that $\lambda+\omega_r$ is not a weight of $L_{\mu,s}$ and that $x'$ is a highest weight vector of $V$. If the depth of $x'$ is $t$ and $t<r$, by Corollary~\ref{cor:depth-increase}, there exists $c\in K$ such that the vector:
\[\ol x:= x' - ce_t e_{t-1}\cdots e_2e_1\ell f_1f_2\cdots f_{t-1}f_t\cdot x'\]
has depth $>t$. But then, we can compute:
\[
\pi(\ol x) = \pi(x') - ce_t e_{t-1}\cdots e_2e_1\ell f_1f_2\cdots f_{t-1}f_t\cdot \pi(x') = \pi(x') -0= x_{\lambda,r},
\]
where we have used the fact that $\pi(x')=x_{\lambda,r}$ has depth $r>t$ for the second equality. Therefore, $\ol x$ is a vector of depth $>t$ such that $\pi(\ol x)=x_{\lambda, r}$, contradicting the maximality of the depth of $x'$. Hence, we must have that $x'$ has depth exactly $r$. Finally, if $x'$ is not a mirabolic weight vector (which can only happen if $r=0$), we replace it by $\ell\cdot x'$.

Thus, we have that $x'$ is a mirabolic highest weight vector of depth $r$ and weight $\lambda+\omega_r$, that has maximal depth in its weight space (since there are no other vectors with the same weight). Then, by Lemma~\ref{lem:maincalc} and the construction of the representation $V_{\lambda,r}$, the representation $V'=MU(n) \cdot x'\sub V$ must be a (finite dimensional) quotient of $V_{\lambda, r}$. As a consequence, by the isomorphism $V_{\lambda,r}\simeq M_{\lambda,r}$ and Proposition~\ref{prop:unique}, we conclude that $V'\simeq L_{\lambda, r}$. Therefore, since $\pi(V')=L_{\lambda, r}$, we must have $\ker(\pi) \cap V' = 0$, and so, $V=V'\oplus \ker(\pi)$, giving us the required splitting.

Similarly, if we have that $\lambda+\omega_r<\mu+\omega_s$, we can dualize the above short exact sequence to get:
\[0\ra L_{\lambda,r}^* \xra{\pi^*} V^* \xra{i^*} L_{\mu,s}^*\ra 0.\]
We now use the previous lemma and repeat the argument above to get that this sequence splits, which implies that the original sequence also splits.

Finally, we suppose that $\lambda+\omega_r=\mu+\omega_s$ and $r\geq s$. (If $r< s$, we can dualize the short exact sequence and proceed as above.) In that case, the weight space in $V$ with weight $\lambda+\omega_r$ is two dimensional. However, we can still repeat the argument in the first half of the proof since the vector $x'$ we constructed is a mirabolic highest weight vector of depth $r$, and thus has the maximal depth in its weight space. Thus, we can conclude that the sequence splits in this case too, completing the proof.
\end{proof}

\begin{proposition} \label{prop:weightspace}
Let $V$ be a finite dimensional $MU(n)$-module, then its decomposition as a direct sum of simples is uniquely determined by the dimensions of its mirabolic weight spaces $V_{\mu,\epsilon}$. 
\end{proposition}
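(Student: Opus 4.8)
The plan is to combine the semisimplicity result of Theorem~\ref{thm:ss} with a linear independence statement for the ``mirabolic characters'' of the simple modules, which I would in turn reduce to a standard fact from the invariant theory of the symmetric group. By Theorem~\ref{thm:ss} we may write $V\cong\bigoplus_{\lambda,r}L_{\lambda,r}^{\oplus m_{\lambda,r}}$ with uniquely determined multiplicities $m_{\lambda,r}\in\ZZ_{\geq 0}$ (the sum ranging over dominant $\lambda$ and $0\leq r\leq n$), and since mirabolic weight space dimensions are additive, $\dim V_{\mu,\epsilon}=\sum_{\lambda,r}m_{\lambda,r}\dim(L_{\lambda,r})_{\mu,\epsilon}$; so the statement is equivalent to the recoverability of the $m_{\lambda,r}$ from the numbers $\dim(L_{\lambda,r})_{\mu,\epsilon}$. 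I would package the given data into characters: in the representation ring $R:=\ZZ[x_1^{\pm 1},\dots,x_n^{\pm 1}]/(x_1\cdots x_n-1)$, set $F_\epsilon:=\sum_{\mu}\dim V_{\mu,\epsilon}\,x^\mu$ for $\epsilon=0,1$, both of which are manifestly determined by the mirabolic weight space dimensions and satisfy $F_0+F_1=\operatorname{ch}(V)$.

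The next step is to express $F_0$ and $F_1$ in terms of the $m_{\lambda,r}$. Because $\rho(\ell)=1\otimes\ell$, the idempotent $\ell$ acts on $L_{\lambda,r}=L_\lambda\otimes W_r$ as $\id\otimes\ell$, and on $W_r$ one has $\ell\cdot u_I=u_I$ if $1\notin I$ and $\ell\cdot u_I=0$ if $1\in I$; hence $\ell\,L_{\lambda,r}=L_\lambda\otimes\langle u_I:1\notin I\rangle$ and $(1-\ell)L_{\lambda,r}=L_\lambda\otimes\langle u_I:1\in I\rangle$. Writing $s_\lambda:=\operatorname{ch}(L_\lambda)$ and $\sigma_r:=e_r(x_2,\dots,x_n)$, this gives $\operatorname{ch}(\ell\,L_{\lambda,r})=s_\lambda\sigma_r$ and $\operatorname{ch}((1-\ell)L_{\lambda,r})=s_\lambda\,x_1\sigma_{r-1}$ (with the conventions $\sigma_{-1}=\sigma_n=0$), so, using that $x_1$ is a unit in $R$,
\[F_1=\sum_{\lambda}\sum_{r=0}^{n-1}m_{\lambda,r}\,s_\lambda\sigma_r,\qquad x_1^{-1}F_0=\sum_\lambda\sum_{r=1}^{n}m_{\lambda,r}\,s_\lambda\sigma_{r-1}=\sum_\lambda\sum_{s=0}^{n-1}m_{\lambda,s+1}\,s_\lambda\sigma_s.\]

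The crux is then to show that the family $\{s_\lambda\sigma_r:\lambda\text{ dominant},\ 0\leq r\leq n-1\}$ is $\QQ$-linearly independent in $R$. Suppose $\sum_{\lambda,r}c_{\lambda,r}s_\lambda\sigma_r=0$ and set $p_r:=\sum_\lambda c_{\lambda,r}s_\lambda$, an element of the invariant subring $R^{S_n}$, so that $\sum_{r=0}^{n-1}p_r\sigma_r=0$. Iterating the identity $e_i(x_1,\dots,x_n)=\sigma_i+x_1\sigma_{i-1}$ gives $\sigma_r=\sum_{j=0}^{r}(-1)^j E_{r-j}\,x_1^{j}$ with $E_i:=e_i(x_1,\dots,x_n)\in R^{S_n}$ (and $E_0=1$), so the relation becomes $\sum_{j=0}^{n-1}(-1)^j\big(\sum_{r\geq j}p_r E_{r-j}\big)x_1^{j}=0$. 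Since $S_n$ acts transitively on $\{x_1,\dots,x_n\}$, the minimal polynomial of $x_1$ over $\operatorname{Frac}(R^{S_n})$ is $\prod_{i=1}^{n}(t-x_i)$, of degree $n$, so $1,x_1,\dots,x_1^{n-1}$ are $R^{S_n}$-linearly independent; hence each coefficient $\sum_{r\geq j}p_rE_{r-j}$ vanishes, and reading these from $j=n-1$ downward forces $p_{n-1}=\dots=p_0=0$. As the Schur polynomials $\{s_\lambda:\lambda\text{ dominant}\}$ form a $\ZZ$-basis of $R^{S_n}$ (the irreducible characters of $\uvsln$), each $p_r=0$ gives $c_{\lambda,r}=0$ for all $\lambda$.

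Granting this independence, $F_1$ recovers $m_{\lambda,r}$ for all dominant $\lambda$ and $0\leq r\leq n-1$, while $x_1^{-1}F_0$ recovers $m_{\lambda,r}$ for $1\leq r\leq n$; together these determine every multiplicity from the mirabolic weight space dimensions, which is the assertion. The only external input is the linear independence of $1,x_1,\dots,x_1^{n-1}$ over $R^{S_n}$; the one point I expect to require some care is phrasing it in the Laurent/torus setting rather than the familiar polynomial one, but over $\QQ$ the ring $R^{S_n}$ is a polynomial ring in the fundamental characters and $R$ is free over it of rank $n!$ (Pittie--Steinberg), so this presents no real obstacle.
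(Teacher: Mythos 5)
Your proof is correct, but it takes a genuinely different route from the paper. You reduce the statement, via semisimplicity and the classification, to the linear independence in the character ring $R=\ZZ[x_1^{\pm1},\dots,x_n^{\pm1}]/(x_1\cdots x_n-1)$ of the ``mirabolic characters'' $\operatorname{ch}(\ell L_{\lambda,r})=s_\lambda\, e_r(x_2,\dots,x_n)$ and $\operatorname{ch}((1-\ell)L_{\lambda,r})=s_\lambda\, x_1 e_{r-1}(x_2,\dots,x_n)$, and you prove that independence by a triangular change of basis from $e_r(x_2,\dots,x_n)$ to $1,x_1,\dots,x_1^{n-1}$ together with the fact that $x_1$ has degree $n$ over $\operatorname{Frac}(R^{S_n})$ (its minimal polynomial being $\prod_i(t-x_i)$, which needs the faithfulness of the $S_n$-action on $\ZZ^n/\ZZ(1,\dots,1)$ — true for the reason you indicate). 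The paper instead argues recursively and locally: it picks a maximal $\uvsln$-weight $\lambda$ of $V$, observes that the only simples contributing to that weight are $L_{\lambda-\omega_r,r}$, that their \emph{extremal} weight spaces are one-dimensional with explicitly known $\ell$-eigenvalue, and then reads off the multiplicities $s_r$ from the $\ell$-eigenspace dimensions $d^i_\epsilon$ along the orbit $w^i\cdot\lambda$ of an $n$-cycle, solving the triangular system $d^i_1=s_0+\dots+s_i$, $d^i_0=s_{i+1}+\dots+s_n$, and finally peels off these summands and iterates. Your approach buys a single global linear-independence statement (no recursion, and it makes transparent exactly which character-theoretic data suffices, namely $F_0$ and $F_1$), at the cost of a small amount of invariant-theory/field-theory input; the paper's argument is more elementary and self-contained, using only the one-dimensionality of extremal weight spaces and the explicit $\ell$-action there. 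Both correctly handle the only delicate bookkeeping point, the overlap between the $r=0,\dots,n-1$ information seen by the $\ell=1$ part and the $r=1,\dots,n$ information seen by the $\ell=0$ part.
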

\begin{proof}
Let $\lambda$ be a maximal weight of $V$ as a $U_v(\sll_n)$-module (i.e. as weight spaces for the $U_v(\sll_n)$-module structure, we have $V_\lambda\neq 0$ and for all $\mu$ such that $V_\mu\neq 0$ we have $\mu\not >\lambda$). Then, a decomposition of $V$ into simple $MU(n)$-modules must contain copies of the modules $L_{\lambda-\omega_r,r}$ for all $0\leq r\leq n$ such that $\lambda-\omega_r$ is dominant, since those are exactly the simples with highest weight $\lambda$.
Let $V':=\bigoplus_{r=0}^n (L_{\lambda-\omega_r,r})^{\oplus s_r}\subset V$ be the sum of all the copies of these modules in the decomposition of $V$.

We consider the extremal weight spaces for the $U_v(\sll_n)$-action, which are $V_{w\cdot\lambda}$, with $w\in \mathcal{S}_n$ (the Weyl group). Notice that, since $L_{\lambda-\omega_r,r}$ has a one dimensional highest weight space, spanned by the vector $x_{\lambda-\omega_r}\otimes u_{\{1,\ldots,r\}}$, it will also have one dimensional extremal weight spaces such that, for all $w\in \mathcal{S}_n$, they are spanned by $x_{w\cdot(\lambda-\omega_r)}\otimes u_{\{w(1),\ldots,w(r)\}}$. Here $x_{w\cdot(\lambda-\omega_r)}$, for varying $w$, span the extremal weights spaces (which are one dimensional) for the simple $\uvsln$-module $L_{\lambda-\omega_r}$. We then have the action of $\ell$ on the extremal weight spaces given by 
$$\ell\cdot x_{w\cdot(\lambda-\omega_r)}\otimes u_{\{w(1),\ldots,w(r)\}}=\epsilon x_{w\cdot(\lambda-\omega_r)}\otimes u_{\{w(1),\ldots,w(r)\}}$$
with
$$\epsilon=\begin{cases}0 & \text{ if }1\in\{w(1),\ldots,w(r)\},\\ 1 & \text{ if }1\not\in\{w(1),\ldots,w(r)\}.\end{cases}$$ 
We now fix the element of the Weyl group to be the $n$-cycle 
$$w:=(n (n-1) (n-2) \ldots 2 1)\in \mathcal{S}_n.$$ Then, the weight space with weight $w^i\cdot \lambda$ in $L_{\lambda-\omega_r,r}$ is such that the eigenvalue for $\ell$ on it is $0$ if $0\leq i\leq r-1$, and $1$ if $r\leq i\leq n-1$. 

We define $d_{i,\epsilon}:=\dim(V_{w^i\cdot\lambda,\epsilon})=\dim(V'_{w^i\cdot\lambda,\epsilon})$, where the equality follows from the fact that $\lambda$ was assumed to be maximal, hence those weight spaces cannot appear in other simple summands in the decomposition of $V$. 
Since $V':=\bigoplus_{r=0}^n (L_{\lambda-\omega_r,r})^{\oplus s_r}$, this gives us a system of equations
$$d_{i,1}=s_0+\ldots+s_i,\qquad d_{i,0}=s_{i+1}+\ldots s_{n},$$
which, given the $d_{i,\epsilon}$'s, uniquely determines the $s_r$'s by
$$s_0=d_{0,1},\quad s_n=d_{n-1,0},\quad s_r=d_{r,1}-d_{r-1,1},\quad 1\leq r\leq n-1.$$
We then conclude the proof by using semisimplicity and iterating this procedure after removing from $V$ all the weight spaces corresponding to the simple modules $L_{\lambda-\omega_r,r}$ that we have already found and choosing a maximal weight among the remaining nonzero weight spaces of $V$ until there are none left.
\end{proof}

\subsection{Braided module category}
Let $\cc$ and $\mm$ be the categories of finite dimensional representations of $\uvsln$ and $MU(n)$ respectively. In this section, we show that $\mm$ is a \emph{braided module category} over $\cc$. We refer the reader to \cite{EGNO} for definitions and background on tensor categories.

It is well-known (see, for example, \cite[$\mathsection{8.3}$]{EGNO}) that $\cc$ is a \emph{braided monoidal category} where the monoidal structure is given by tensor product of representations and the braiding $c_{X,Y}:X\otimes Y\to Y\otimes X$ for all $X,Y\in \mathrm{Ob}(\cc)$ is given by a universal $R$-matrix, which is an invertible element in a certain completion of ${\uvsln\otimes \uvsln}$. The $R$-matrix satisfies the \emph{quantum Yang-Baxter equation}, which is equivalent to the braid identities satisfied by $c_{X,Y}$.

Furthermore, we note that $\cc$ is a braided $\cc$-module category over itself. The action bifunctor $\rhd$ is again given by the tensor product of representations and the braiding $e_{X,M}:X\rhd M\to X\rhd M$ for all $X,M\in \mathrm{Ob}(\cc)$ is defined as the composition $c_{M,X}\circ c_{X,M}$.

Recall the comodule map $\rho:MU(n)\to \uvsln\otimes MU(n)$. This map allows us to view the category $\mm$ as a $\cc$-module category: We define the action bifunctor via
\begin{align*}
\rhd:\cc\times\mm&\to \mm\\
X\rhd M&:=X\otimes M
\end{align*}
for all $X\in \cc$ and $M\in \mm$, where the $MU(n)$-action on $X\otimes M$ is via the map $\rho$.

By semisimplicity, given any $MU(n)$-module $M$, there exists a canonical decomposition:
\[M=\bigoplus_{r=0}^n M_r,\]
where, for any $0\leq r\leq n$, the module $M_r$ is equal to the span of the isotypic components of the simple representations $L_{\lambda,r}$ in $M$ having depth $r$. We define $\mm_r$ to be the full subcategory of $\mm$ whose objects are isomorphic to direct sums of simple $MU(n)$-modules of depth $r$.

The following lemma is an immediate consequence of definitions.

\begin{lemma}
For any $r$, the category $\mm_r$ is a $\cc$-module subcategory of $\mm$. Furthermore, the functor
\[\bigoplus_{r=0}^n \mm_r \to \mm\]
given by taking direct sums of objects and morphisms is an equivalence of $\cc$-module categories, whose inverse is given by sending an object $M$ to the $(r+1)$-tuple $(M_0, M_1, \dots, M_n)$ given by the decomposition above.
\end{lemma}

Thus, in order to construct a braided $\cc$-module structure over $\mm$, it suffices to construct such a structure over each $\mm_r$. This is accomplished with the aid of the following lemma:

\begin{lemma}
Fix $0\leq r\leq n$. The assignment
\begin{align*}
F_r:\cc&\to \mm_r\\
X&\mapsto X\rhd W_r
\end{align*}
induces an equivalence of $\cc$-module categories.
\end{lemma}

\begin{proof}
We describe the construction of an inverse $G_r$ to $F_r$. Given any $M\in \mm_r$, we can canonically decompose $M$ into a direct sum of isotypic components for simple modules of depth $r$. So, without loss of generality, we will assume that $M$ is equal to one such isotypic component corresponding to the simple $L_{\lambda,r}$ for a fixed dominant integral weight $\lambda$.

Let $M_{hw}\sub M$ denote the weight space of $M$ corresponding to the weight $\lambda+\omega_r$. Thus, we have a canonical $MU(n)$-module isomorphism $L_{\lambda,r}\otimes_{\CC} M_{hw} \xra{\sim} M$. Then, the inverse $G_r$ to $F_r$ is given via the formula $G_r(M):=L_{\lambda}\otimes_{\CC} M_{hw}$. That $G_r$ is an inverse to $F_r$ follows by diagram chasing.
\end{proof}

As a consequence of the above lemmas, we see that as a $\cc$-module category, the category $\mm$ is equivalent to the \emph{free module category} $\cc^{\oplus(n+1)}$. Hence, we can transport the braiding on $\cc$ (as a $\cc$-module category) to $\mm$. This implies:

\begin{proposition} \label{prop:bmc}
The category $\mm$ is a braided $\cc$-module category. Furthermore, the braiding $e$ is uniquely determined by its restriction to simple objects of $\mm$ given by the following formula: For all $X\in \cc$ and $M=L_{\lambda,r}(=L_{\lambda}\otimes W_r)\in\mm$,
\[e_{X, M}=(c_{L_{\lambda},X}\otimes \mathrm{Id}_{W_r})\circ (c_{X,L_{\lambda}}\otimes \mathrm{Id}_{W_r}).\]
\end{proposition}

As a consequence, we observe an interesting Type BC phenomenon associated with the category $\mm$. We recall the definition of the Type BC braid group.

\begin{definition}
The Braid group associated to the Coxeter graph $BC_n$ is the group $Br_n$ generated by $n$ elements $\{\sigma_1,\sigma_2,\dots,\sigma_{n-1},\tau\}$ and satisfying the relations:
\begin{align*}
\sigma_i\sigma_{i+1}\sigma_i&=\sigma_{i+1}\sigma_i\sigma_{i+1}&&1\leq i\leq n-2\\
\sigma_i\sigma_j&=\sigma_j\sigma_i&&|i-j|>1\\
\sigma_i\tau&=\tau\sigma_i&&1\leq i\leq n-2\\
\sigma_{n-1}\tau\sigma_{n-1}\tau&=\tau\sigma_{n-1}\tau\sigma_{n-1}.
\end{align*}
\end{definition}

Then, the above proposition and \cite[Proposition 3.4]{MW26} imply that:

\begin{corollary}
Let $X\in\cc$ and $M\in \mm$. Then, for any $n\geq 2$, there exists an action of the group $Br_n$ on the space $X^{\otimes n} \rhd M$. Explicitly, the map:
\begin{align*}
\rho_n^{X,M}:Br_n&\to Aut_{\mm}(X^{\otimes n} \rhd M)\\
\sigma_i&\mapsto \mathrm{Id}_{X^{\otimes (i-1)}}\otimes c_{X,X}\otimes \mathrm{Id}_{X^{\otimes(n-i-1)}\rhd M}\\
\tau&\mapsto \mathrm{Id}_{X^{\otimes (n-1)}}\otimes e_{X,M},
\end{align*}
induces a group homomorphism.
\end{corollary}

\begin{remark}
The results of this subsection give a further example of the fact that the combinatorics appearing in the mirabolic setting are a mix of type A and type B/C. Another example is that the relations \eqref{eq:mun.4} and \eqref{eq:mun.5} imply that those generators satisfy the Type B/C quantum Serre relations
\[e_1^3\ell-[3]_ve^2\ell e + [3]_v e\ell e^2- \ell e^3 = 0, \text{ and } f_1^3\ell- [3]_vf^2\ell f + [3]_vf\ell f^2 -\ell f^3 = 0.\]
See also the discussion in \cite[Rem 3.7]{R15}.
\end{remark}

\bibliographystyle{alpha}
\bibliography{mirabolic-sln-biblist}

\end{document}